\newtheorem{theorem}{Theorem}[section]
\newtheorem{lemma}[theorem]{Lemma}
\newtheorem{definition}[theorem]{Definition}
\newtheorem{example}[theorem]{Example}
\newtheorem{proposition}[theorem]{Proposition}
\newtheorem{remark}[theorem]{Remark}
\newcommand{\dbar}{\bar{\partial}}
\newcommand{\pd}[2]{\frac{\partial #1}{\partial #2}}
\newcommand{\secpd}[3]{\frac{{\partial}^2 #1}{{\partial #2}{\partial #3}}}
\newcommand{\inner}[1]{\langle #1\rangle}
\newcommand{\bb}[1]{\mathbb{#1}}
\newcommand{\cu}[1]{\mathcal{#1}}
\newcommand{\til}[1]{\widetilde{#1}}
\begin{document}

\title[Geometric Quantization via SYZ Transforms]{Geometric Quantization via SYZ Transforms}
\author[K. Chan]{Kwokwai Chan}
\address{Department of Mathematics\\ The Chinese University of Hong Kong\\ Shatin\\ Hong Kong}
\email{kwchan@math.cuhk.edu.hk}
\author[Y.-H. Suen]{Yat-Hin Suen}
\address{Center for Geometry and Physics\\ Institute for Basic Science (IBS)\\ Pohang 37673\\ Republic of Korea}
\email{yhsuen@ibs.re.kr}

\date{\today}

\begin{abstract}
The so-called quantization problem in geometric quantization is asking whether the space of wave functions is independent of the choice of polarization. In this paper, we apply SYZ transforms to solve the quantization problem in two cases:
\begin{enumerate}
\item
semi-flat Lagrangian torus fibrations over complete compact integral affine manifolds, and
\item
projective toric manifolds.
\end{enumerate}
More precisely, we prove that the space of wave functions associated to the real polarization is canonically isomorphic to that associated to a complex polarization via SYZ transforms in both cases.
\end{abstract}

\maketitle

\tableofcontents

\section{Introduction}

\subsection{The quantization problem}

Physicists are keen on quantizing classical theories. Given any classical theory, they aim at finding a quantum counterpart whose classical limit recovers the original classical theory. This process is called quantization. Geometric quantization is a mathematical approach to constructing such quantum theories.

A system of classical mechanics is a phase space formalized as a symplectic manifold, so we let $(\check{X},\check{\omega})$ be a symplectic manifold such that $\check{\omega}$ represents an integral class $[\check{\omega}] \in H^2(\check{X}; \mathbb{Z})$. Then there exists a Hermitian line bundle $\cu{L} \to \check{X}$ which admits a unitary connection $\nabla$ so that
$$\frac{i}{2\pi}F_{\nabla} = \check{\omega}.$$
Such a pair $(\cu{L},\nabla)$ is called a {\em prequantum line bundle} (Definition \ref{def:prequantum}).
Given such a prequantum system, physicists would like to associate a Hilbert space $\cu{H}$ to represent the wave functions or quantum states. A natural choice is to take $\cu{H}$ as the ($L^2$-completion of the) space of all sections of a line bundle $\cu{L}$. However, this space is in general too big to capture the actual physics. To obtain a Hilbert space of reasonable size, physicists introduced the notion of polarization.

A {\em polarization} is an involutive Lagrangian subbundle $P$ of the complexified tangent bundle $T_{\bb{C}}\check{X}$ of $\check{X}$. Given a polarization, one can then define $\cu{H}$ as the space $\Gamma_P(\check{X},\cu{L})$ of {\em polarized sections} -- sections that are covariantly constant along $P$ (Definition \ref{def:pola}).
When $(\check{X}, \check{J}, \check{\omega})$ is a K\"ahler manifold admitting a Lagrangian fibration $\check{p}: \check{X} \to B$, there are two natural choices for the polarization, namely, the {\em real polarization} induced by the Lagrangian fibration $\check{p}$ and the {\em complex polarization} induced by the complex structure $\check{J}$. But physicists believe that the quantum theory should be independent of the polarization we chose, so it is desirable that the space of polarized sections for the real polarization is canonically isomorphic to that for the complex polarization. This is known as the {\em quantization problem}.

In this paper, we show how {\em SYZ transforms}, which are Fourier--type transforms responsible for the interchange between complex-geometric data on $\check{X}$ and symplectic-geometric data on the mirror $X$ \cite{SYZ}, can be applied to solve the quantization problem. We demonstrate this idea in two cases (Section \ref{ch:SYZ_geometric_quantization}):
\begin{enumerate}
\item
semi-flat Lagrangian torus fibrations over complete compact integral affine manifolds (see Definition \ref{def:complete}), and
\item
projective toric manifolds.
\end{enumerate}

\subsection{Applying SYZ}

Let us explain why SYZ transforms can help to solve the quantization problem. We begin with a semi-flat Lagrangian torus fibration $\check{p}:\check{X}\to B$ which admits a Lagrangian section.\footnote{In general there are singular or degenerate fibers in a Lagrangian torus fibration and the semi-flat one is obtained by restricting to the smooth locus.}
The {\em SYZ mirror} of $\check{X}$ is simply given by the total space of the fiberwise dual torus fibration $p: X \to B$. Exploiting the integral affine structure on the base $B$, we have the identifications
\begin{equation*}
\check{X} \cong TB/\Lambda, \quad X \cong T^*B/\Lambda^*,
\end{equation*}
together with the fiberwise dual fibrations
\begin{equation*}
\xymatrix{
\check{X} \ar[dr]_{\check{p}} & & X \ar[dl]^{p}\\
& B &}
\end{equation*}
This is the toy model for SYZ mirror symmetry \cite{SYZ, Leung05}.

Since a torus fiber $F_x := p^{-1}(x)$ is dual to the corresponding fiber $\check{F}_x := \check{p}^{-1}(x)$ (for some point $x \in B$), geometrically $F_x$ is parametrizing the flat $U(1)$-connections over $\check{F}_x$. So a section of $p$ gives a $U(1)$-connection over each fiber of $\check{p}$ and they combine to produce a $U(1)$-connection over the total space $\check{X}$. A simple yet fundamental observation of Arinkin-Polishchuk \cite{AP} and Leung-Yau-Zaslow \cite{LYZ} is that the section $L$ of $p$ is {\em Lagrangian} if and only if the corresponding $U(1)$-connection defines a {\em holomorphic} line bundle $\check{L}$ over $\check{X}$. We call the map $L \mapsto \check{L}$ the {\em SYZ transform}.

Every Hermitian holomorphic line bundle over $\check{X}$ comes from this construction. In particular, we can consider a prequantum line bundle
$$(\cu{L},\nabla)=(\check{L},\nabla_{\check{L}})$$
given by the SYZ transform of some Lagrangian section $L$ of $p: X \to B$.

\'Sniatycki \cite{cohomology_GQ} proved that the space $\Gamma_{P_\bb{R}}(\check{X},\check{L})$ of polarized sections for the real polarization $P_\bb{R}$ (or simply the space of real polarized sections) can be identified with the $\bb{C}$-vector space spanned by the so-called {\em Bohr-Sommerfeld fibers} of $\check{p}: \check{X} \to B$ which are, by definition, fibers of $\check{p}$ over which $\nabla_{\check{L}}$ restricts to the trivial connection (see Definition \ref{def:BS}):
$$\Gamma_{P_{\bb{R}}}(\check{X},\check{L}) \cong \bigoplus_{\check{F}_x:\text{BS fiber}}\bb{C}\cdot \check{F}_x.$$
Under the SYZ transform, these fibers correspond precisely to the intersection points between $L$ and the zero section $L_0$ of the dual fibration $p: X \to B$, so we have
\begin{equation}\label{eqn:SYZ1}
\bigoplus_{\check{F}_x:\text{BS fiber}}\bb{C}\cdot \check{F}_x \cong \bigoplus_{p\in L_0\cap L}\bb{C}\cdot p
\end{equation}
via the SYZ transform.

On the other hand, the space $\Gamma_{P_{\bb{C}}}(\check{X},\check{L})$ of polarized sections for the complex polarization $P_\bb{C}$  (or simply the space of complex polarized sections) is given by the space $H^0(\check{X}, \check{L})$ of holomorphic sections of $\check{L}$. Since $\check{L}$ is ample, we have $\text{Ext}^{\bullet}(\mathcal{O}_{\check{X}}, \check{L}) = H^0(\check{X}, \check{L})$. Then the homological mirror symmetry (HMS) conjecture of Kontsevich \cite{HMS} together with its compatibility with the SYZ conjecture \cite{SYZ} implies the isomorphism
$$H^0(\check{X}, \check{L}) \cong HF^{\bullet}(L_0, L),$$
again via the SYZ transform.
If the intersection of $L_0$ and $L$ is nice enough (e.g. if all the intersection points of $L_0$ and $L$ are of Maslox index 0 so that the Floer differential $\mathfrak{m}_1$ is 0), then we further have
$$HF^{\bullet}(L_0, L) \cong \bigoplus_{p\in L_0\cap L}\bb{C}\cdot p.$$

Combining, we arrive at the canonical isomorphism between the spaces of polarized sections
$$\Gamma_{P_{\bb{R}}}(\check{X},\check{L}) \cong \Gamma_{P_{\bb{C}}}(\check{X},\check{L}),$$
obtained by applying SYZ transforms {\em twice}.

Note that the space $HF^{\bullet}(L_0, L)$ in the above heuristic argument is only playing an auxiliary role. In fact, instead of using $HF^{\bullet}(L_0, L)$ and alluding to the HMS conjecture, we will consider the space
$$ker(d_W)\cap A^0_{r.d.}(P(L_0,L))$$
of $d_W$-closed functions which rapidly decay (hence the subscript ``r.d.'') along the lattice directions of $P(L_0,L)$, where
$P(L_0, L)$ is the {\em fiberwise geodesic path space} of the pair $(L_0,L)$ equipped with the $S^1$-valued {\em area function}
$$\cu{A}: P(L_0,L) \to S^1$$
and $d_W$ is the {\em Witten differential}.

Applying Witten-Morse theory \cite{Witten_Morse, CLM_Witten-Morse, Ma_thesis} to $P(L_0, L)$, we obtain the correspondence
$$\bigoplus_{p\in L_0\cap L}\bb{C}\cdot p \cong ker(d_W)\cap A^0_{r.d.}(P(L_0,L))$$
Then the SYZ transform (see Section \ref{sec:morphism})
$$\cu{F}:(A^{\bullet}(P(L_0,L)),d_W) \to (A^{0,\bullet}(\check{X},\check{L}),\dbar)$$
gives the identification
$$ker(d_W)\cap A^0_{r.d.}(P(L_0,L)) \cong H^0(\check{X},\check{L}).$$
Combining with \eqref{eqn:SYZ1}, which was obtained via another SYZ transform, we solve the quantization problem in the semi-flat setting:

\begin{theorem}[=Theorem \ref{thm:sf}]\label{thm:main_thm_1}
Let $\check{p}:\check{X}\to B$ be a semi-flat Lagrangian torus fibration over a compact complete special integral affine manifold $B$. Let $g$ be an integral (Definition \ref{def:integral_metric}) Hessian type metric on $B$. With respect to the prequantum line bundle $(\check{L}_g,\nabla_{\check{L}_g})$ associated to the metric $g$, the SYZ transform $\cu{F}$ induces a canonical isomorphism between $\Gamma_{P_{\bb{R}}}(\check{X},\check{L}_g)$ and $\Gamma_{P_{\bb{C}}}(\check{X},\check{L}_g)$
\end{theorem}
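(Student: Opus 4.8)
The plan is to assemble the isomorphism as a composition of three canonical identifications, two of which are SYZ transforms and one of which is the content of the preparatory sections on Witten--Morse theory. The skeleton is the chain
\begin{equation*}
\Gamma_{P_{\bb{R}}}(\check{X},\check{L}_g) \;\cong\; \bigoplus_{\check{F}_x:\,\text{BS}}\bb{C}\cdot\check{F}_x \;\cong\; \bigoplus_{p\in L_0\cap L}\bb{C}\cdot p \;\cong\; \ker(d_W)\cap A^0_{r.d.}(P(L_0,L)) \;\cong\; H^0(\check{X},\check{L}_g) \;=\; \Gamma_{P_{\bb{C}}}(\check{X},\check{L}_g).
\end{equation*}
The first isomorphism is \'Sniatycki's theorem identifying real polarized sections with the span of Bohr--Sommerfeld fibers; I would just cite it after checking that the integrality hypothesis on $g$ guarantees the BS fibers form the expected discrete set and that $\nabla_{\check{L}_g}$ is flat along $\check{p}$. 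The last equality is the definition of the complex polarization on a K\"ahler manifold. So the real work is the middle three arrows.

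First I would make the SYZ transform of the prequantum data explicit: starting from the integral Hessian metric $g$ on $B$, write down the Lagrangian section $L$ of $p:X\to B$ whose SYZ transform is $(\check{L}_g,\nabla_{\check{L}_g})$, using the Arinkin--Polishchuk/Leung--Yau--Zaslow correspondence. The key local computation is to identify, on each affine chart, the potential function for $L$ (the primitive of the $1$-form cutting out $L$) with the K\"ahler potential coming from $g$, so that the condition ``$\nabla_{\check{L}_g}$ is trivial on $\check{F}_x$'' translates into ``$x\in L_0\cap L$''. This gives the second arrow \eqref{eqn:SYZ1} as a genuine bijection of index sets, and hence of the spanned vector spaces. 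Then I would invoke the Witten--Morse machinery set up earlier: the fiberwise geodesic path space $P(L_0,L)$ with its $S^1$-valued area function $\cu{A}$ and Witten differential $d_W$ has critical points exactly the points of $L_0\cap L$, and since $L$ and $L_0$ are graphs of closed $1$-forms over $B$ the intersection is clean with all points of Maslov index $0$, so $d_W$-harmonic representatives localized near the critical points give the third isomorphism. Finally the morphism of complexes $\cu{F}:(A^\bullet(P(L_0,L)),d_W)\to(A^{0,\bullet}(\check{X},\check{L}_g),\dbar)$ constructed in Section~\ref{sec:morphism} is a quasi-isomorphism in degree $0$; restricting to rapidly decaying elements and taking cohomology yields $\ker(d_W)\cap A^0_{r.d.}(P(L_0,L))\cong H^0(\check{X},\check{L}_g)$. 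Composing, the induced map $\Gamma_{P_\bb{R}}\to\Gamma_{P_\bb{C}}$ is the asserted canonical isomorphism, and I would note its naturality (independence of the choices of geodesic representatives and bump functions) to justify the word ``canonical''.

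The main obstacle I expect is the convergence/decay bookkeeping in the third and fourth arrows rather than any conceptual point. The path space $P(L_0,L)$ is infinite-dimensional and the area function is only $S^1$-valued, so ``$d_W$-closed and rapidly decaying along the lattice directions'' has to be set up so that (a) the harmonic representatives built from the critical points of $\cu{A}$ actually lie in $A^0_{r.d.}$, and (b) the SYZ transform $\cu{F}$ sends $A^\bullet_{r.d.}$ into the Dolbeault complex of a bona fide holomorphic section, i.e. the fiberwise Fourier series converges to a smooth (indeed holomorphic) section of $\check{L}_g$ over the compact total space $\check{X}$. Completeness and integrality of the affine base $B$ are exactly what is needed to control this: they force the lattice $\Lambda^*$ to be well-behaved globally and make the relevant theta-like series converge. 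So the heart of the proof is checking that under the integral Hessian hypothesis these Fourier-theoretic estimates go through uniformly over $B$; once that is done, each arrow in the chain is an isomorphism for essentially formal reasons, and the theorem follows by composition.
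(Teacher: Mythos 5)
Your chain of isomorphisms is exactly the one the paper assembles (Propositions \ref{prop:BS_fiber_pola}, \ref{prop:BS_fiber_int_pt}, \ref{prop:Witten_Morse} and \ref{prop:Fourier}), and you correctly locate the substance in the middle arrows. However, there is a genuine gap where you write that you would ``invoke the Witten--Morse machinery set up earlier.'' The Witten--Morse theory reviewed in Section \ref{sec:morphism} is stated for a Morse function on a \emph{closed} Riemannian manifold, whereas $P(L_0,L_g)$ is a typically \emph{non-compact} infinite cover of $B$ (also, a minor slip: it is a finite-dimensional covering space of $B$, not infinite-dimensional). So one cannot simply quote Witten's localization theorem to get $\bigoplus_{p}\bb{C}\cdot p \cong \ker(d_W)\cap A^0_{r.d.}(P(L_0,L_g))$; a separate argument is required, and that argument is where the hypotheses on $B$ actually do work.

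What has to be proved first is a structural result about the covering space itself, Lemma \ref{lem:decomposition}: $P(L_0,L_g)$ decomposes as $P \amalg \coprod_{p\in L_0\cap L_g} P_p(L_0,L_g)$, where each $P_p$ is simply connected (hence affinely $\cong\bb{R}^n$) and contains exactly one critical point, and the remainder $P$ contains none. The proof uses completeness of $B$ (to have affine geodesics in the universal cover) together with strict convexity of $\phi$ (to exclude a second critical point or a nontrivial loop in a component). With this in hand, $\ker(d_W)\cap A^0(P_p)$ is literally one-dimensional, spanned by $e^{-2\pi\cu{A}_p}$, because $d\cu{A}$ is \emph{exact} on the simply connected $P_p$ --- no small-eigenvalue localization is needed. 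One then still has to rule out $d_W$-closed rapidly decaying solutions on the critical-point-free piece $P$, which is Lemma \ref{lem:max}, a maximum principle using compactness of $B$ and rapid decay along the fiber lattice. Finally, even the rapid decay of $e^{-2\pi\cu{A}_p}$ is established in Proposition \ref{prop:Witten_Morse} somewhat indirectly, by combining $\cu{F}^{-1}$ with a dimension count on $H^0(\check{X},\check{L}_g)$ rather than by a direct estimate. Your proposal correctly flags convergence and decay as the pressure points, but it omits the two structural lemmas (the decomposition of $P(L_0,L_g)$ and the vanishing on $P$) that replace the black-box use of Witten--Morse theory and that are the actual content of the completeness hypothesis.
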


The prequantum line bundle $(\check{L}_g,\nabla_{\check{L}_g})$ here is explicitly given as the SYZ transform of the Lagrangian section
$$L_g:=\{(x,d\phi(x))\in X:x\in B\},$$
where $\phi$ is the local potential function for the Hessian type metric $g$.

Similar ideas can also be applied to projective toric manifolds:

\begin{theorem}[=Theorem \ref{thm:proj_toric}]\label{thm:main_thm_2}
Let $\check{p}:\check{X}\to B$ be the moment map of a projective toric manifold $\check{X}$. With respect to the prequantum line bundle $(\check{L}_{\phi},\nabla_{\check{L}_{\phi}})$, the SYZ transform $\cu{F}$ induces a canonical isomorphism between $\Gamma_{P_{\bb{R}}}(\check{X},\check{L}_{\phi})$ and $\Gamma_{P_{\bb{C}}}(\check{X},\check{L}_{\phi})$.
\end{theorem}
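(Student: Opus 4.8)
The plan is to run the same two-step SYZ strategy that establishes Theorem \ref{thm:main_thm_1}, but adapted to the moment-map fibration $\check p:\check X\to B$ over a Delzant polytope $B$ (the smooth locus of which is the interior $B^\circ$). First I would set up the symplectic side: identify $\check X$ restricted over $B^\circ$ with the semi-flat model $T B^\circ/\Lambda$ equipped with the toric K\"ahler form, and identify the prequantum line bundle $(\check L_\phi,\nabla_{\check L_\phi})$ as the SYZ transform of the Lagrangian section $L_\phi=\{(x,d\phi(x)):x\in B^\circ\}$, where $\phi$ is the symplectic potential (the Legendre dual of the K\"ahler potential). The Bohr--Sommerfeld fibers of $\check p$ over $B^\circ$ are exactly the fibers over the integral lattice points $x\in B^\circ\cap M$, so by \'Sniatycki's theorem $\Gamma_{P_\mathbb R}(\check X,\check L_\phi)\cong\bigoplus_{x\in B^\circ\cap M}\mathbb C\cdot\check F_x$, and under the first SYZ transform these correspond to the intersection points $L_0\cap L_\phi=\{x:d\phi(x)\in\Lambda^*\}$, which are again precisely the interior lattice points. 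This yields the analogue of \eqref{eqn:SYZ1}.

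Next I would treat the complex side. The space $\Gamma_{P_\mathbb C}(\check X,\check L_\phi)$ of complex polarized sections is $H^0(\check X,\check L_\phi)$, which for a projective toric manifold has the classical monomial basis indexed by lattice points of the polytope; one must check that only the interior lattice points survive after the SYZ/Witten-theoretic identification, the boundary lattice points corresponding to sections that do not lie in the rapidly-decaying subcomplex $A^0_{r.d.}(P(L_0,L_\phi))$ once the metric degenerates toward $\partial B$. Concretely, I would form the fiberwise geodesic path space $P(L_0,L_\phi)$ with its $S^1$-valued area function $\mathcal A$ and Witten differential $d_W$, apply Witten--Morse theory (as in \cite{Witten_Morse, CLM_Witten-Morse, Ma_thesis}) to get $\ker(d_W)\cap A^0_{r.d.}(P(L_0,L_\phi))\cong\bigoplus_{x\in B^\circ\cap M}\mathbb C\cdot x$, and then invoke the SYZ transform $\mathcal F:(A^\bullet(P(L_0,L_\phi)),d_W)\to(A^{0,\bullet}(\check X,\check L_\phi),\dbar)$ from Section \ref{sec:morphism} to identify this with $H^0(\check X,\check L_\phi)$. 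Composing the two SYZ transforms gives the canonical isomorphism $\Gamma_{P_\mathbb R}(\check X,\check L_\phi)\cong\Gamma_{P_\mathbb C}(\check X,\check L_\phi)$.

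The main obstacle, and the point where the toric case genuinely diverges from the semi-flat case, is the behavior at the boundary $\partial B$: the moment-map fibration is not semi-flat there (fibers degenerate), the symplectic potential $\phi$ blows up, and the metric $g=\mathrm{Hess}\,\phi$ is incomplete. I expect the real work to be in showing that (i) the integrality hypothesis packaged into $(\check L_\phi,\nabla_{\check L_\phi})$ forces all Bohr--Sommerfeld fibers to lie over \emph{interior} lattice points, with no extra contributions from the boundary strata, and (ii) the rapid-decay condition defining $A^0_{r.d.}(P(L_0,L_\phi))$ is exactly calibrated so that the Witten-critical-point count reproduces $B^\circ\cap M$ rather than all of $\bar B\cap M$ — equivalently, that the monomial sections attached to boundary lattice points fail to be $L^2$ / fail to lift through $\mathcal F$. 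Once the boundary analysis is controlled, the interior argument is word-for-word the semi-flat proof of Theorem \ref{thm:main_thm_1}, since over $B^\circ$ everything is the standard dual-torus picture; so I would structure the proof as ``reduce to $B^\circ$, cite Theorem \ref{thm:sf}, then handle the boundary decay estimates separately.''
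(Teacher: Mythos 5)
Your proposal contains a fundamental error in the boundary analysis that would make the count on the two sides fail to match. You propose to show that only \emph{interior} lattice points $B^\circ\cap M$ contribute on both sides, arguing that (i) Bohr--Sommerfeld fibers only occur over $B^\circ$, and (ii) the monomial sections attached to boundary lattice points ``fail to be $L^2$ / fail to lift through $\mathcal{F}$.'' Both claims are false, and the paper does the opposite. On the complex side, the classical toric fact is $\dim H^0(\check{X},\check{L}_\phi)=|M\cap B|$, counting \emph{all} lattice points of the polytope including the boundary ones; the monomial characters $\chi^u$ for $u\in\partial B\cap M$ are perfectly good global holomorphic sections of $\check{L}_\phi$ (it is $H^0(\check{X},K_{\check{X}}\otimes\check{L}_\phi)$ that is indexed by interior lattice points). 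On the real side, the fibers of $\check{p}$ over $\partial B$ are degenerate isotropic tori, and the Bohr--Sommerfeld \emph{isotropic} fibers over boundary lattice points must be included in the direct sum; their contribution is exactly what makes $\Gamma_{P_{\mathbb{R}}}$ and $\Gamma_{P_{\mathbb{C}}}$ have the same dimension. Your plan to ``reduce to $B^\circ$, cite Theorem \ref{thm:sf}, then handle boundary decay estimates separately'' would produce two spaces of the wrong (and possibly unequal) dimension.

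The genuinely new ideas in the paper, which your proposal is missing, are exactly the devices for \emph{recovering} the boundary contributions rather than excluding them. First, the paper replaces $L_0\cap L_\phi$ by $\overline{L_0\cap L_\phi}$, the set of \emph{intersections at infinity} (Definition \ref{def:int_equiv}): equivalence classes of rays $\xi:[0,\infty)\to N_{\mathbb{R}}$ along which $\lim_{t\to\infty}d\phi(\xi(t))$ exists in $M$. This compactifies the intersection set and identifies it with $M\cap B$. Second, and more importantly, the paper does \emph{not} work with the rapidly-decaying subcomplex $A^0_{r.d.}(P(L_0,L_\phi))$ as in the semi-flat proof; instead it introduces the notion of \emph{weakly extendable} functions on $P(L_0,L_\phi)$, a growth condition calibrated so that $\mathcal{F}(f)$ extends to a smooth section over the whole toric variety (Lemmas \ref{lem:ext_fun}, \ref{lem:ext_sec}). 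The key Lemma \ref{lem:ext_holo} then characterizes the weakly-extendable solutions of $d_W f=0$ as precisely the $f_u=e^{-2\pi\phi(\xi)}e^{2\pi\langle u,\xi\rangle}$ for $u\in M\cap B$, which SYZ-transform to the monomial basis of $H^0(\check{X},\check{L}_\phi)$. The proof of the theorem then verifies directly that the limiting connection $\lim_{t\to\infty}\nabla_{\check{L}_\phi}|_{\check{F}_{\xi(t)}}$ is trivial over each boundary lattice point, establishing the isotropic Bohr--Sommerfeld condition. In short: the toric case is not ``word-for-word the semi-flat proof plus estimates to kill the boundary''; it requires new definitions (intersections at infinity, weak extendability, BS isotropic fibers) precisely so as to keep the boundary in play.
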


The Lagrangian section mirror to $(\check{L}_{\phi},\nabla_{\check{L}_{\phi}})$ here is again given by the graph of the differential of the K\"ahler potential $\phi$. However, in this case, due to degeneracy of the moment map fibers over the boundary $\partial B$ of the moment polytope $B$, not only Bohr-Sommerfeld Lagrangian fibers, but also {\em Bohr-Sommerfeld isotropic fibers}, would contribute. Hence we also need to consider the intersection points of $L_{\phi}$ with $L_0$ over the boundary $\partial B$ (or at infinity if one considers the Legendre dual $N_{\bb{R}}$ of $B$).

Detailed proofs of Theorems \ref{thm:main_thm_1} and \ref{thm:main_thm_2} will be given in Section \ref{ch:SYZ_geometric_quantization}.

\begin{remark}\label{rem:Maslov_Morse}
The Lagrangian sections $L_0$ and $L$ are graded, and as pointed out in \cite[Remark 13]{KS_HMS_torus_fibration}, the Maslov index of an intersection point of $L_0$ and $L$ coincides with the Morse index of the corresponding critical point of $\cu{A}$. Since the Lagrangian section $L$ satisfies the condition
$$Hess(\cu{A})(p)>0,$$
for every $p\in L_0\cap L$, so indeed all the intersection points of $L_0$ and $L$ are of Maslov index 0. 
It follows that
$$HF^{\bullet}(L_0,L)=HF^0(L_0,L)=\bigoplus_{p\in L_0\cap L}\bb{C}\cdot p.$$
Hence we indeed have $HF^0(L_0,L)\cong H^0(\check{X},\check{L})$ (as vector spaces), as predicted by the HMS conjecture.
\end{remark}

\subsection{Relation to other works}

The idea of applying mirror symmetry to geometric quantization (or vice versa) has appeared several times in the literature before our work and must be well-known among experts.

As far as we know, Andrei Tyurin was the first to suggest that the {\em numerical quantization problem}, namely, the dimension equality
$$\dim \Gamma_{P_{\bb{R}}}(\check{X},\check{L})=\dim \Gamma_{P_{\bb{C}}}(\check{X},\check{L}),$$
follows from the SYZ \cite{SYZ} and HMS \cite{HMS} conjectures, at least in the case of elliptic curves and algebraic K3 surfaces. Our SYZ transform $\mathcal{F}$ was called the {\em geometric Fourier transformation (GFT)} in his paper. Let us also point out that his calculations resembled those carried out in an earlier work \cite{Gross_SLag_I} of Gross.

The belief that the space of polarized sections for a complex polarization $P_\bb{C}$ is canonically isomorphic to that for the real polarization suggests that when a K\"ahler manifold $\check{X}$ admits a Lagrangian torus fibration (possibly with singular fibers), the space $H^0(\check{X}, \check{L})$ of holomorphic sections of a holomorphic line bundle $\check{L} \to \check{X}$ should have a {\em canonical basis} or a {\em theta basis}. This again was first stated by Tyurin in \cite{Tyurin}.
In his thesis \cite{Nohara}, Nohara reviewed this idea and, combined with an earlier work of Andersen \cite{Andersen}, reproved the numerical quantization equality for semi-flat Lagrangian torus fibrations over compact base.

In a series of works (\cite{GHS16} and upcoming works; see also the nice survey article \cite{GS_theta_function} for an overview), Gross, Hacking, Keel and Siebert show that a large class of varieties including Calabi-Yau manifolds carry {\em theta functions}. Instead of Lagrangian fibrations and Floer cohomologies (or their de-Rham versions such as the space $ker(d_W)\cap A^0_{r.d.}(P(L_0,L))$ that we use in this paper), they use tropical geometry and counts of so-called {\em broken lines} (tropical analogue of holomorphic disks) to construct theta functions and prove a strong form of Tyurin's conjecture. Their work was inspired by Tyurin's work, and the above heuristic argument for the existence of theta functions applying both the SYZ and HMS conjectures has actually already appeared on p.5 of \cite{GS_theta_function}. See also \cite[Section 8]{Kanazawa_GQ} for a nice review of these topics.

In \cite{BMN_abelian_varieties}, Baier, Mour\~ao and Nunes solved the quantization problem for abelian varieties and, together with Florentino, they solved the same problem for projective toric manifolds in \cite{Toric_geometric_quantization}. They considered a degenerating family of complex structures $\{\check{J}_s\}_{s\geq 0}$ approaching a large complex structure limit and which are compatible with a fixed symplectic structure $\check{\omega}$. As $s\to +\infty$, they proved that holomorphic sections of the pre-quantum line bundle would converge to distributional sections supported on Bohr-Sommerfeld fibers of the Lagrangian torus fibration, giving rise to the desired canonical isomorphism.

In our mirror symmetric approach, the same limiting process (here we use $\hbar>0$ and let $\hbar\to 0$) occurs when we apply Witten-Morse theory to the fiberwise geodesic path space $P(L_0,L)$ \cite{CLM_Witten-Morse, Ma_thesis}. If $s_{\hbar}$ is a holomorphic section of the prequantum line bundle $\check{L}_{\hbar}$, then its SYZ transform in $(A^{\bullet}(P(L_0,L)),d_W)$ (here $d_W$ depends on $\hbar$) converges to a sum of $\delta$-functions supported on the intersection points between $L_0$ and $L$ (or the critical points of the area function $\cu{A}$) as $\hbar\to 0$. Applying the SYZ transform (or Fourier transform) in the reverse direction transforms these $\delta$-functions to distributional sections supported on the Bohr-Sommerfeld fibers. In summary, under the SYZ transforms, the limiting process in \cite{BMN_abelian_varieties, Toric_geometric_quantization} is corresponding precisely to the Witten deformation, as illustrated in Figure \ref{fig:Witten_deformation}.

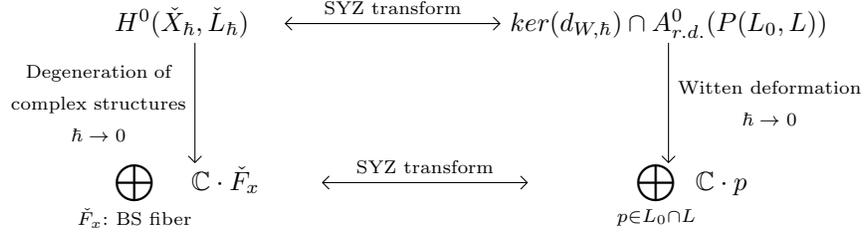
\begin{figure}
\begin{tikzpicture}
\matrix(m)[matrix of math nodes,
row sep=4.6em, column sep=6.8em,
text height=1.5ex, text depth=0.25ex]
{H^0(\check{X}_{\hbar},\check{L}_{\hbar})\quad &ker(d_{W,\hbar})\cap A_{r.d.}^0(P(L_0,L))\\
\displaystyle{\bigoplus_{\check{F}_x:\text{ BS fiber}}}\bb{C}\cdot \check{F}_x  \quad  \quad  & \quad \quad \quad \displaystyle{\bigoplus_{p\in L_0\cap L}}\bb{C}\cdot p \quad \quad \\};
\path[<->,font=\scriptsize,>=angle 90]
(m-1-1) edge node[auto] {$\text{SYZ transform}$} (m-1-2);
\path[->,font=\scriptsize,>=angle 90]
(m-1-1) edge node[left] {$\begin{matrix} \text{Degeneration of} \\ \text{complex structures} \\ \hbar \to 0 \end{matrix}$} (m-2-1);
\path[->,font=\scriptsize,>=angle 90]
(m-1-2) edge node[auto] {$\begin{matrix} \text{Witten deformation} \\ \hbar \to 0 \end{matrix}$} (m-2-2);
\path[<->,font=\scriptsize,>=angle 90]
(m-2-1) edge node[auto] {$\text{SYZ transform}$} (m-2-2);
\end{tikzpicture}
\caption{Degenerating family of complex structures vs. Witten deformation}\label{fig:Witten_deformation}
\end{figure}

\subsection*{Acknowledgment}
We are grateful to Conan Leung and Ziming Nikolas Ma for useful discussions. The first named author would like to thank Siye Wu for insightful comments and suggestions. The second named author would like to thank J{\o}rgen Ellegaard Andersen for his interest and comments on this work.

The work of K. C. was substantially supported by grants from the Research Grants Council of the Hong Kong Special Administrative Region, China (Project No. CUHK14314516 $\&$ CUHK14302617). The work of Y.-H. S. was supported by IBS-R003-D1.


\section{Semi-flat SYZ mirror symmetry}\label{sec:SYZ}

In this section, we review the constructions of the semi-flat mirror manifold and the SYZ transform of immersed Lagrangian multi-sections.

\subsection{Mirror construction in the semi-flat case}

Let $B$ be an $n$-dimensional integral affine manifold, that is, the transition functions of $B$ belongs to $GL(n,\bb{Z})\rtimes\bb{R}^n$, the group of $\bb{Z}$-affine linear map. Let $\Lambda\subset TB$ and $\Lambda^*\subset T^*B$ be the natural lattice bundles defined by the integral affine structure. More precisely, on a local affine chart $U\subset B$,
$$\Lambda(U):=\bigoplus_{j=1}^n\bb{Z}\cdot\pd{}{x^j},\quad \Lambda^*(U):=\bigoplus_{j=1}^n\bb{Z}\cdot dx^j,$$
where $(x^j)$ are affine coordinates of $U$. We set
$$X := T^*B/\Lambda^*, \quad \check{X} := TB/\Lambda.$$
Let $(y_j), (\check{y}^j)$ be fiber coordinates (which are dual to each other) of $X$ and $\check{X}$ respectively. Then $(x^j,y_j)$ and $(x^j,\check{y}^j)$ define a set of local coordinates on $T^*U/\Lambda^*\subset X$ and $TU/\Lambda\subset\check{X}$, respectively.

Equip $X$ with the standard symplectic structure
$$\omega_{\hbar}:=\hbar^{-1}\sum_jdy_j\wedge dx^j,$$
where $\hbar>0$ is a small real parameter. There is a natural almost complex structure $\check{J}_{\hbar}$ on $\check{X}$ given by
$$\check{J}_{\hbar}\left(\pd{}{x^j}\right)=-\hbar^{-1}\pd{}{\check{y}^j}\text{  and  }\check{J}_{\hbar}\left(\pd{}{\check{y}^j}\right)=\hbar\pd{}{x^j}.$$
It is easy to see that $\check{J}_{\hbar}$ is indeed integrable with local complex coordinates given by $z_j=\check{y}^j+i\hbar^{-1}x^j$. Hence $(\check{X},\check{J}_{\hbar})$ defines a complex manifold. To obtain Calabi-Yau structure, we need the following

\begin{definition}
An $n$-dimensional manifold $B$ is called a \emph{special integral affine manifold} if its transition functions sit in $SL(n,\bb{Z})\rtimes\bb{R}^n$.
\end{definition}

Hence if $B$ is an integral special affine manifold, then the canonical line bundle of $\check{X}$ is trivial. Indeed,
$$\check{\Omega}_{\hbar}:=dz_1\wedge\cdots\wedge dz_n$$
defines a global holomorphic volume form on $\check{X}$.

\begin{definition}
$(\check{X},\check{J}_{\hbar})$ is called the {\em SYZ mirror} of $(X,\omega_{\hbar})$.
\end{definition}

The $\hbar$-parameter gives us a family of symplectic manifolds. As $\hbar\to 0$, the symplectic volume of $(X,\omega_{\hbar})$ approaches infinity, which is the so-called {\em large volume limit} of the family $\{(X,\omega_{\hbar})\}_{\hbar>0}$.

Next, we equip K\"ahler structures on both $X$ and $\check{X}$.

\begin{definition}
A Riemannian metric on $B$ is said to be of \emph{Hessian type} if locally there is a smooth convex function $\phi$ such that the metric is locally given by
$$\sum_{j,k=1}^n\secpd{\phi}{x^j}{x^k}dx^j\otimes dx^k,$$
where $(x^j)$ are local affine coordinate of $B$.
\end{definition}

Suppose $g$ is a Hessian Riemannian metric on $B$. Then we can use it to construct natural complex structure on $X$ and symplectic structure on $\check{X}$ such that they compatible with the natural symplectic and complex structure on $X$ and $\check{X}$, respectively.

\begin{proposition}
Given a Hessian Riemannian metric on $B$, there is a natural complex structure $J_g$ on $X$ such that $(X,\omega_\hbar,J_g)$ is a K\"ahler manifold.
\end{proposition}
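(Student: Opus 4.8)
The plan is to write down the complex structure $J_g$ explicitly in the local coordinates $(x^j, y_j)$ on $X = T^*B/\Lambda^*$ and then verify the three things that make $(X,\omega_\hbar,J_g)$ Kähler: that $J_g$ is an almost complex structure ($J_g^2 = -\mathrm{id}$), that it is integrable (vanishing Nijenhuis tensor, most easily seen by exhibiting holomorphic coordinates), and that it is compatible with $\omega_\hbar$ in the sense that $\omega_\hbar(J_g\cdot, J_g\cdot) = \omega_\hbar$ and $\omega_\hbar(\cdot, J_g\cdot)$ is a positive-definite symmetric form. First I would use the Hessian metric $g_{jk} = \partial^2\phi/\partial x^j\partial x^k$ and its inverse $g^{jk}$ to define $J_g$ on the frame $\{\partial/\partial x^j, \partial/\partial y_j\}$; the natural guess, mirroring the formula for $\check J_\hbar$ on $\check X$ but with the metric inserted so that the $\Lambda^*$-periodicity of the $y$-coordinates is respected, is something like $J_g(\partial/\partial x^j) = -\hbar\sum_k g^{jk}\,\partial/\partial y_k$ and $J_g(\partial/\partial y_j) = \hbar^{-1}\sum_k g_{jk}\,\partial/\partial x^k$, so that $J_g^2 = -\mathrm{id}$ is immediate from $g_{jk}g^{kl} = \delta_j^l$.

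Next I would check integrability by producing holomorphic coordinates. The complex-valued functions $w_j := y_j + i\hbar^{-1}\,\partial\phi/\partial x^j$ (equivalently $w_j = y_j + i\hbar^{-1}(d\phi)_j$) should be holomorphic for $J_g$: differentiating, $dw_j = dy_j + i\hbar^{-1}g_{jk}\,dx^k$, and one checks that these $(1,0)$-forms annihilate the $J_g$-antiholomorphic tangent vectors, i.e. that $J_g$ acts as multiplication by $i$ on the span of $dw_j$. Since the $w_j$ are (local) coordinates — their real and imaginary parts $(y_j, \partial\phi/\partial x^j)$ form a coordinate system because $\phi$ is strictly convex, so the Hessian is invertible — and since $\phi$ transforms correctly under the $\mathbb{Z}$-affine changes of chart (up to affine-linear terms that do not affect the Hessian), the $w_j$ glue to give $X$ the structure of a complex manifold. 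Then compatibility with $\omega_\hbar = \hbar^{-1}\sum_j dy_j\wedge dx^j$ is a direct computation: $\omega_\hbar(\partial/\partial x^j, \partial/\partial y_k) = -\hbar^{-1}\delta^j_k$ and a short calculation with the formula for $J_g$ shows $\omega_\hbar(-,J_g-)$ equals, up to sign and the factor $\hbar^{-1}$, the Hessian metric $g_{jk}$ pulled back suitably together with its inverse, which is symmetric and positive-definite since $g$ is. One also records the Kähler form in the slick form $\omega_\hbar = \frac{i}{2}\partial\bar\partial(\text{something})$ or notes $\omega_\hbar = -d(\hbar^{-1}\sum_j y_j\,dx^j)$ to confirm closedness (which is anyway built into $\omega_\hbar$ being the standard symplectic form).

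The only real subtlety — and the step I would be most careful about — is the gluing/well-definedness across affine charts: one must check that the local formula for $J_g$ is independent of the chosen affine coordinates and that the local potentials $\phi$ (which are only defined up to affine-linear functions) nonetheless produce globally well-defined holomorphic transition data on $X$, using that under a transition $x \mapsto Ax + b$ with $A\in GL(n,\mathbb{Z})$ the fiber coordinates $y$ transform by $(A^{-1})^T$ so that $\Lambda^*$ is preserved, and that the Hessian $g_{jk}$ transforms as a covariant 2-tensor while $\partial\phi/\partial x^j$ shifts by constants — exactly the shifts that are absorbed into the $\mathbb{R}^n$-part of the structure group acting on the $y$-fibers. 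Everything else is routine linear algebra with the pair $(g_{jk}, g^{jk})$. I expect the proof in the paper to simply exhibit $J_g$ and $\omega_\hbar$ in coordinates, note the holomorphic coordinates $w_j = y_j + i\hbar^{-1}\partial\phi/\partial x^j$, and leave the positivity and gluing as a direct check.
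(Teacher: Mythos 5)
Your overall strategy matches the paper's: write $J_g$ in the coordinates $(x^j,y_j)$, exhibit holomorphic coordinates by a Legendre-type formula built from $d\phi$, and verify $\omega_\hbar$-compatibility and positivity by direct computation. The paper's proof is exactly this (modulo normalization: the paper sets $dz_j = dy_j + i\sum_k g_{kj}\,dx^k$, with no $\hbar^{-1}$, which is a harmless choice), and it does not even comment on the gluing across affine charts, so the care you take there is a welcome addition.

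However, your explicit formula for $J_g$ has the metric and its inverse in the wrong slots, and this is not a cosmetic issue: it contradicts your own proposed holomorphic coordinates and produces a \emph{non-integrable} almost complex structure. You wrote $J_g(\partial/\partial x^j) = -\hbar\sum_k g^{jk}\,\partial/\partial y_k$ and $J_g(\partial/\partial y_j) = \hbar^{-1}\sum_k g_{jk}\,\partial/\partial x^k$. This does square to $-\mathrm{id}$ and is $\omega_\hbar$-compatible and positive, but its $(1,0)$-forms are spanned by $dy_j + i\hbar\sum_k g^{kj}\,dx^k$, and $\sum_k g^{kj}\,dx^k$ is not a closed $1$-form for a generic Hessian metric (unlike $\sum_k g_{kj}\,dx^k = d\bigl(\partial\phi/\partial x^j\bigr)$, which is), so this $J_g$ has no holomorphic coordinates at all. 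In particular the check you propose — that $J_g$ acts as multiplication by $i$ on $dw_j = dy_j + i\hbar^{-1}\sum_k g_{kj}\,dx^k$ — fails for your $J_g$; it holds only for $J_g(\partial/\partial x^j) = -\hbar^{-1}\sum_k g_{jk}\,\partial/\partial y_k$ and $J_g(\partial/\partial y_j) = \hbar\sum_k g^{kj}\,\partial/\partial x^k$, i.e.\ with $g_{jk}\leftrightarrow g^{jk}$ (and $\hbar\leftrightarrow\hbar^{-1}$) interchanged relative to what you wrote. This is also what tensoriality forces: $\partial/\partial y_j$ lives in the fiber of $T^*B$, so mapping fiber directions to base directions is the job of the inverse metric $g^{jk}$, and mapping base to fiber is the job of $g_{jk}$, matching the paper's $J_g^*(dx^j) = \sum_k g^{jk}\,dy_k$ and $J_g^*(dy_j) = -\sum_k g_{kj}\,dx^k$. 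Once you make that swap, the rest of your outline goes through and agrees with the paper's argument.
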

\begin{proof}
Define
$$dz_j=dy_j+i\sum_{k=1}^ng_{kj}dx^k.$$
Then one can easily check that $\{z_j\}$ define complex coordinates on $X$ and the complex structure $J_g$ is given by
$$J_g^*(dx^j)=\sum_{k=1}^ng^{jk}dy_k,$$
$$J_g^*(dy_j)=-\sum_{k=1}^ng_{kj}dx^k.$$
The symplectic form on $X$ is given by
$$\omega_\hbar=\hbar^{-1}\sum_{j=1}^ndy_j\wedge dx^j.$$
Hence
$$J_g^*\omega_\hbar=-\hbar^{-1}\sum_{j,k,l=1}^ng^{jl}g_{kj}dx^k\wedge dy_l=\hbar^{-1}\sum_{k=1}^ndy_k\wedge dx^k=\omega_\hbar.$$
and so
$$\omega_\hbar(\cdot,J_g(\cdot))=\sum_{j,k=1}^n\left(\hbar^{-2}g_{jk}dx^j\otimes dx^k+g^{jk}dy_j\otimes dy_k\right),$$
which is positive definite.
\end{proof}

Using this complex structure, we can also define a holomorphic volume form $\Omega_g$  on $X$ by
$$\Omega_g:=dz_1\wedge\cdots\wedge dz_n$$
when $B$ is in fact special. Then $(X,\omega_\hbar,J_g)$ is also a Calabi-Yau manifold.

Similarly, we have the following

\begin{proposition}\label{prop:sympl_B}
Given a Hessian Riemannain metric on $B$, there is a natural symplectic structure $\check{\omega}_g$ on $\check{X}$ such that $(\check{X},\check{\omega}_g,\check{J}_\hbar)$ is a K\"ahler manifold.
\end{proposition}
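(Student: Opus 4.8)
The plan is to build $\check\omega_g$ as the symplectic mirror of the complex structure $J_g$ produced in the previous proposition. In a local affine chart with coordinates $(x^j,\check y^j)$ on $TU/\Lambda\subset\check X$, and writing $g_{jk}=\partial^2\phi/\partial x^j\partial x^k$ for the Hessian metric, I would set
$$\check\omega_g:=\sum_{j,k=1}^n g_{jk}\,d\check y^j\wedge dx^k$$
(the overall positive normalization being a matter of convention), and then verify, in order, that: (i) this local expression is chart-independent and descends to $\check X=TB/\Lambda$; (ii) $\check\omega_g$ is closed; (iii) $\check\omega_g$ is non-degenerate; and (iv) $\check J_\hbar$ is compatible with $\check\omega_g$ and $\check\omega_g(\cdot,\check J_\hbar\cdot)$ is positive definite. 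Steps (ii)--(iv) together give that $(\check X,\check\omega_g,\check J_\hbar)$ is K\"ahler.

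For (i): under a transition $x\mapsto Ax+b$ with $A\in GL(n,\bb{Z})$ the coefficients $(g_{jk})$ transform as a $(0,2)$-tensor while the $\check y^j$ transform as linear fiber coordinates (components of a tangent vector), so the two Jacobians cancel; a lattice translation $\check y^j\mapsto\check y^j+n^j$ leaves each $d\check y^j$ and $g_{jk}(x)$ unchanged. Hence $\check\omega_g$ is a globally defined $2$-form on $\check X$ -- note that, in contrast to the local potential $\phi$, the form itself is global. For (ii): since $g_{jk}=\partial_j\partial_k\phi$, in $d\check\omega_g=\sum_{j,k,l}\partial_l g_{jk}\,dx^l\wedge d\check y^j\wedge dx^k$ the coefficient $\partial_l g_{jk}=\partial_l\partial_j\partial_k\phi$ is symmetric in $(l,k)$ while $dx^l\wedge dx^k$ is antisymmetric, so the sum vanishes. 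For (iii): $(g_{jk})$ is positive definite, hence invertible, so $\check\omega_g$ pairs the $x$- and $\check y$-directions non-degenerately.

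For (iv) I would use the explicit form of $\check J_\hbar$: from $\check J_\hbar(\partial_{x^j})=-\hbar^{-1}\partial_{\check y^j}$ and $\check J_\hbar(\partial_{\check y^j})=\hbar\,\partial_{x^j}$ one gets $\check J_\hbar^*(dx^j)=\hbar\,d\check y^j$ and $\check J_\hbar^*(d\check y^j)=-\hbar^{-1}dx^j$; substituting into $\check\omega_g$ and using $g_{jk}=g_{kj}$ yields $\check J_\hbar^*\check\omega_g=\check\omega_g$, and evaluating on the coordinate vector fields gives
$$\check\omega_g(\,\cdot\,,\check J_\hbar\,\cdot\,)=\sum_{j,k=1}^n\Bigl(\hbar^{-1}g_{jk}\,dx^j\otimes dx^k+\hbar\,g_{jk}\,d\check y^j\otimes d\check y^k\Bigr),$$
which is positive definite. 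Equivalently, in the holomorphic coordinates $z_j=\check y^j+i\hbar^{-1}x^j$ a short computation gives $\check\omega_g=\tfrac{i\hbar}{2}\sum_{j,k}g_{jk}\,dz_j\wedge d\bar z_k$, a manifestly positive $(1,1)$-form which is locally equal to $i\partial\bar\partial\bigl(c\hbar^{-1}\phi\bigr)$ for a positive constant $c$; this last identity re-proves (ii)--(iv) simultaneously. I do not anticipate any real obstacle: the only point needing a little care is the descent in (i), and even that is routine because $g$ is a genuine tensor. The whole computation is the exact mirror of the one carried out for $(X,\omega_\hbar,J_g)$ above.
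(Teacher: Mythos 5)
Your proposal is correct and takes essentially the same approach as the paper: you define exactly the same form $\check\omega_g=\sum_{j,k}g_{jk}\,d\check y^j\wedge dx^k$ and verify compatibility with $\check J_\hbar$. The paper's proof consists only of this definition plus the one-line assertion that the pair is compatible, so your write-up simply supplies the routine verifications (descent, closedness, non-degeneracy, positivity) that the authors leave implicit, all of which check out.
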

\begin{proof}
We define
$$\check{\omega}_g:=\sum_{j,k=1}^ng_{jk}d\check{y}^j\wedge dx^k.$$
Then $\check{\omega}_g,\check{J}_\hbar$ is compatible.
\end{proof}

As a conclusion, by fixing a Hessian type metric $g$ on $B$, we obtain two Calabi-Yau manifolds $(X,\omega_\hbar,J_g)$ and $(\check{X},\check{\omega}_g,\check{J}_\hbar)$.

\begin{remark}
Both the K\"ahler metrics $\omega_\hbar,\check{\omega}_g$ are Calabi-Yau metrics if and only if the local convex function $\phi$ satisfies the real Monge-Amp\`ere equation:
$$\det\left(Hess(\phi)\right)=\text{constant}.$$
\end{remark}

\subsection{SYZ transform of branes}

We follow \cite{AP, LYZ} to define the SYZ transform of a Lagrangian section in a semi-flat Lagrangian torus fibration.

Let $\mathcal{P}\to X\times_{B}\check{X}$ be the Poincar\'e line bundle. The total space is defined as the quotient
$$\cu{P}:=(T^*B\oplus TB)\times\bb{C}/\Lambda^*\oplus\Lambda.$$
The fiberwise action of $\Lambda^*\oplus\Lambda$ on $(T^*B\oplus TB)\times\bb{C}$ is given by
$$(\lambda,\check{\lambda})\cdot(y,\check{y},t):=(y+\lambda,\check{y}+\check{\lambda},e^{i\pi((y,\check{\lambda})-(\lambda,\check{y}))}\cdot t).$$
Define a connection $\nabla_{\mathcal{P}}$ on $\mathcal{P}$ by
$$\nabla_{\mathcal{P}}:=d+i\pi((y,d\check{y})-(\check{y},dy)).$$
The section $e^{i\pi(y,\check{y})}$ is invariant under the $\{0\}\oplus\Lambda$ action:
$$(0,\check{\lambda})\cdot(y,\check{y},t)=(y,\check{y}+\check{\lambda},e^{i\pi(y,\check{\lambda}+\check{y})}).$$
Hence it descends to a section $\check{1}$ on $T^*B\times_{B}\check{X}$. With respect to this frame, the connection $\nabla_{\mathcal{P}}$ can be written as
$$\nabla_{\mathcal{P}}=d+2\pi i(y,d\check{y}).$$
The remaining action of $\Lambda^*\oplus\{0\}$ then becomes
\begin{align*}
\lambda\cdot[(y,\check{y},e^{i\pi(y,\check{y})})]_{{\Lambda}} = e^{-2\pi i(\lambda,\check{y})}[y+\lambda,\check{y},e^{i\pi(y+\lambda,\check{y})}]_{\Lambda}.
\end{align*}

Let $\bb{L}=(L,\xi)$ be a Lagrangian section and $\cu{L}$ be a $U(1)$-local system on $L$ with holonomy $e^{2\pi ib}$, for some $b\in\bb{R}$. Define
$$\check{\bb{L}}_b:=(\pi_{\check{X}})_*((\xi\times id_{\check{X}})^*(\cu{P})\otimes\cu{L})).$$
where $\pi_{\check{X}}:L\times_{B}\check{X}\to\check{X}$ is the natural projection. The following proposition is standard.

\begin{proposition}\label{prop:sec_line}
The connection $\nabla_{\check{\bb{L}}_b}$ satisfies $(\nabla_{\check{\bb{L}}_b}^2)^{0,2}=0$ if and only if $\xi:L\to X$ is a Lagrangian section.
\end{proposition}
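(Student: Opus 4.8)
The plan is to compute the curvature $\nabla_{\check{\mathbb{L}}_b}^2$ explicitly in local affine coordinates and extract its $(0,2)$-part with respect to the complex structure $\check{J}_\hbar$. First I would write the Lagrangian section as a graph, $\xi(x) = (x, y(x))$ for locally defined functions $y_j(x)$ on an affine chart $U \subseteq B$ (since $L$ is a section of $p: X \to B$, its image is cut out by equations $y_j = y_j(x)$ modulo $\Lambda^*$). Pulling back the frame $\check{1}$ and the connection form $2\pi i(y, d\check{y})$ along $\xi \times \mathrm{id}_{\check X}$, and tensoring with the flat $U(1)$-local system $\mathcal{L}$ with holonomy $e^{2\pi i b}$, the pushforward bundle $\check{\mathbb{L}}_b \to \check X$ inherits a connection whose connection $1$-form is (up to the harmless contribution of $b$, which is closed and hence does not affect curvature) $2\pi i \sum_j y_j(x)\, d\check y^j$. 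Here $x$ and $\check y$ are both coordinates on $TU/\Lambda \subseteq \check X$.

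Next I would differentiate: the curvature $2$-form is
$$
F_{\check{\mathbb{L}}_b} = 2\pi i \sum_{j,k} \frac{\partial y_j}{\partial x^k}\, dx^k \wedge d\check y^j.
$$
The key point is then to decompose this with respect to $\check J_\hbar$. Using the holomorphic coordinates $z_j = \check y^j + i\hbar^{-1} x^j$, so that $dz_j = d\check y^j + i\hbar^{-1} dx^j$ and $d\bar z_j = d\check y^j - i\hbar^{-1}dx^j$, I would invert these to express $dx^k$ and $d\check y^j$ in terms of $dz$'s and $d\bar z$'s, substitute into $F_{\check{\mathbb{L}}_b}$, and collect the $d\bar z_j \wedge d\bar z_k$ coefficients. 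A direct computation shows that the $(0,2)$-component is proportional to $\sum_{j,k}\left(\frac{\partial y_j}{\partial x^k} - \frac{\partial y_k}{\partial x^j}\right) d\bar z_j \wedge d\bar z_k$ (the symmetric part of $\partial y_j/\partial x^k$ cancels, or rather reassembles into the $(1,1)$-part, while the antisymmetric part lands in both $(2,0)$ and $(0,2)$). Hence $(\nabla_{\check{\mathbb{L}}_b}^2)^{0,2} = 0$ if and only if $\partial y_j/\partial x^k = \partial y_k/\partial x^j$ for all $j,k$ on every affine chart.

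Finally I would identify this last condition with the Lagrangian condition on $\xi$. The symplectic form on $X$ is $\omega_\hbar = \hbar^{-1}\sum_j dy_j \wedge dx^j$, and pulling back along $\xi(x) = (x, y(x))$ gives $\xi^*\omega_\hbar = \hbar^{-1}\sum_{j,k} \frac{\partial y_j}{\partial x^k}\, dx^k \wedge dx^j$, which vanishes precisely when $\frac{\partial y_j}{\partial x^k}$ is symmetric in $j,k$ — exactly the condition obtained above. (Equivalently, $\xi$ being Lagrangian locally means $y = d\phi$ for a potential $\phi$, which is the integrability of the closed form $\sum y_j dx^j$.) One should also check that the decomposition is coordinate-independent, but since $B$ has integral affine transition functions in $GL(n,\mathbb{Z}) \ltimes \mathbb{R}^n$, the lattices $\Lambda, \Lambda^*$ and the expression for $\check J_\hbar$ transform consistently, so the local computations glue. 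The main obstacle I anticipate is purely bookkeeping: carefully tracking the change of frame from the defining quotient description of $\mathcal{P}$ to the descended frame $\check 1$, and then keeping the real-versus-complex index conventions straight when decomposing $F_{\check{\mathbb{L}}_b}$ into bidegrees — there is no conceptual difficulty, but sign and factor errors are easy to make.
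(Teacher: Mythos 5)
Your proof is correct. The paper itself does not prove this proposition; it states it as ``standard'' and defers to Arinkin--Polishchuk \cite{AP} and Leung--Yau--Zaslow \cite{LYZ}. Your local-coordinate computation --- writing $\xi(x) = (x, y(x))$, obtaining $F_{\check{\bb{L}}_b} = 2\pi i \sum_{j,k}\partial_k y_j\, dx^k \wedge d\check y^j$, decomposing via $z_j = \check y^j + i\hbar^{-1}x^j$ to isolate the $(0,2)$-coefficient $\propto (\partial_k y_j - \partial_j y_k)$, and matching this with the vanishing of $\xi^*\omega_\hbar$ --- is exactly the standard argument those references give, and fills in what the paper leaves implicit.
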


Hence $\check{\bb{L}}_b$ carries a natural holomorphic structure.

\begin{definition}
$(\check{\bb{L}}_b,\nabla_{\check{\bb{L}_b}})$ is called the {\em SYZ mirror bundle} of the A-brane $(\bb{L},\cu{L})$. For convenience, we just write the SYZ mirror of $(\bb{L},\cu{L})$ as $\check{\bb{L}}_b$ for short.
\end{definition}

\begin{remark}
It is known that every Lagrangian sections are graded Lagrangian immersions of the Calabi-Yau manifold $(X,\omega_\hbar,\Omega_g)$. For a proof, see \cite{KS_HMS_torus_fibration}.
\end{remark}

\subsection{SYZ transform of morphisms}\label{sec:morphism}

The exposition here follows Ma's PhD thesis \cite{Ma_thesis}.

Let $\bb{L}_1=(B,\xi_1),\bb{L}_2=(B,\xi_2)$ be two Lagrangian sections of the semi-flat fibration $p:X\to B$ and $\check{L}_1,\check{L}_2$ be the SYZ mirror bundles. Denote the images of $\xi_1$ and $\xi_2$ by $L_1$ and $L_2$ respectively. Let
$$P(L_1,L_2) := \coprod_{x\in B}\pi_1(p^{-1}(x);L_1,L_2)$$
be the fiberwise geodesic path space of $(L_1, L_2)$. It is a (possibly disconnected) covering space of $B$ via the natural projection map.

Let $p\in L_1\cap L_2$ and $x\in B$. Let $u:[0,1]\times[0,1]\to X$ be a smooth map such that for any $s,t\in[0,1]$,
$$u(s,0)\in L_1,\quad u(s,1)\in L_2,\quad u(0,t)=p,\quad u(1,t)\in p^{-1}(x).$$
Let $\til{P}(L_1,L_2)$ be set of all $(x,[u])$, where $x\in B$ and $[u]$ is the homotopy class of some $u$ that satisfies the above properties. There is a natural boundary map $\partial^+:\til{P}(L_1,L_2)\to P(L_1,L_2)$ given by
$$\partial^+:(x,[u])\mapsto(x,[u|_{\{1\}\times[0,1]}]).$$

Define the area function $\cu{A}:\til{P}(L_1,L_2)\to\bb{R}$ by
$$\cu{A}(x,[u])=\int_0^1\int_0^1u^*\omega_\hbar.$$
Since the boundary of $u$ lies in Lagrangian submanifolds, the integral depends only on the homotopy class of the map $u$.
In general, $\cu{A}$ may not be well-defined as a function on $P(L_1,L_2)$ but its differential
$$d\cu{A}(x,\gamma)=\dot{\gamma}\lrcorner\omega_\hbar=\hbar^{-1}\sum_{j=1}^n\left(\xi_j^{(2)}-\xi_j^{(1)}+m_j\right)dx^j,$$
where $(m_1,\dots,m_n)\in\bb{Z}^n$ corresponds to $\gamma\in\pi_1(p^{-1}(x);L_1,L_2)\cong\bb{Z}^n$, is a well-defined 1-form on $P(L_1,L_2)$. That is, there exists a 1-form $\theta$ on $P(L_1,L_2)$ such that $(\partial^+)^*\theta=d\cu{A}$. By abuse of notation, we use $d\cu{A}$ to stand for the 1-form $\theta$. But one should keep in mind that $\theta$ is, in general, not exact. Note that the critical points of $\cu{A}$ are precisely the intersection points of $L_1$ and $L_2$.

On $P(L_1,L_2)$, one can equip its space of differential forms $A^{\bullet}(P(L_1,L_2))$ with the Witten differential:
$$d_{W,\hbar}:=d+2\pi(\dot{\gamma}\lrcorner\omega_{\hbar})\wedge.$$
Following \cite{Ma_thesis} (see also \cite{Fukaya_asymptotic_analysis, KS_HMS_torus_fibration}), we define the SYZ transform
$\cu{F}:A^{\bullet}(P(L_1,L_2))\to A^{0,\bullet}(\check{X},\check{L}_1^*\otimes\check{L}_2)$ by
$$\cu{F}\left(\sum_I\alpha_I(x,m)dx^I\right):=\left(-\frac{\hbar}{2i}\right)^{|\alpha|}\sum_{m\in\bb{Z}^n}\sum_I\alpha_I(x,m)e^{2\pi i(m,\check{y})}d\bar{z}^I\otimes\check{1}_1^*\otimes\check{1}_2.$$
To avoid any convergence issues, we restrict ourselves to rapidly decay differential forms:

\begin{definition}
An element $\alpha\in A^{\bullet}(P(L_1,L_2))$ is said to {\em rapidly decay} if for any compact subset $K\subset B$, integer $k,l\geq 0$,
$$\lim_{|\dot\gamma|\to\infty}\sup_{x\in K}|\dot{\gamma}|^k|\nabla^l\alpha(x,\gamma)|=0,$$
where $\nabla^l$ stands for the $l$-times covariant derivative with respective to the affine connection $\nabla$ on $P(L_0,L)$ (induced from the one on $B$) and $|\cdot|$ is the norm induced by the Hessian-type metric $g$ on $B$. The space of all rapidly decay forms is denoted by $A^{\bullet}_{r.d.}(P(L_1,L_2))$ where the subscript ``r.d.'' stands for ``rapidly decay''.
\end{definition}

It is easy to see that $d_{W,\hbar}$ preserves $A^{\bullet}_{r.d.}(P(L_1,L_2))$. We can also define the inverse SYZ transform:
$$\cu{F}^{-1}\left(\sum_I\check{\alpha}_I(x,\check{y})d\bar{z}^I\otimes\check{1}_1^*\otimes\check{1}_2\right)_m:=\left(-\frac{\hbar}{2i}\right)^{-|\check{\alpha}|}\sum_I\left(\int_{\check{p}^{-1}(x)}\check{\alpha}(x,\check{y})e^{-2\pi i(m,\check{y})}d\check{y}\right)\otimes dx^I.$$
It is not hard to see that $\cu{F}$ and $\cu{F}^{-1}$ are indeed well-defined and inverse to each other. More importantly, $\cu{F}$ and $\cu{F}^{-1}$ exchange the Witten complex $(A^{\bullet}_{r.d.}(P(L_1,L_2)),d_{W,\hbar})$ with the Dolbeault complex $(A^{0,\bullet}(\check{X},\check{L}_1^*\otimes\check{L}_2),\dbar_{\hbar})$.

\begin{proposition}\label{prop:Fourier}
$\cu{F}:(A^{\bullet}_{r.d.}(P(L_1,L_2)),d_{W,\hbar})\to (A^{0,\bullet}(\check{X},\check{L}_1^*\otimes\check{L}_2),\dbar_{\hbar})$ is an isomorphism of differential graded vector spaces.
\end{proposition}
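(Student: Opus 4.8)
The plan is to check three things: that $\cu{F}$ and $\cu{F}^{-1}$ are well-defined between the stated spaces and are mutually inverse; that $\cu{F}$ preserves the (form) degree; and that $\cu{F}$ intertwines $d_{W,\hbar}$ with $\dbar_\hbar$. Degree preservation is immediate from the formula, since a $dx^I$ of length $k$ is sent to $d\bar z^I$ of bidegree $(0,k)$, so the real work is in the other two points, which I would organize as follows.

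First I would settle well-definedness and invertibility, which is essentially Fourier analysis along the torus fibers together with the transformation laws already recorded above. If $\alpha=\sum_I\alpha_I(x,m)\,dx^I$ is rapidly decaying, then for every compact $K\subset B$ and every multi-index the functions $\alpha_I(x,m)$ and their $x$-derivatives decay faster than any power of $|m|$ uniformly over $K$; hence the series $\sum_{m\in\bb{Z}^n}\alpha_I(x,m)e^{2\pi i(m,\check y)}$, together with all of its $x$- and $\check y$-derivatives, converges absolutely and uniformly on compact subsets of $\check X$, so $\cu{F}(\alpha)$ is a genuine smooth $(0,\bullet)$-form. That it descends to a section of $\check L_1^*\otimes\check L_2$ over all of $\check X$ follows by combining the $\Lambda^*\oplus\Lambda$-transformation rule of the Poincar\'e frame $\check 1$ (pulled back along $\xi_1,\xi_2$) with the fact that the covering $P(L_1,L_2)\to B$ has deck transformations acting on the sheet label $m$ through the monodromy of $\Lambda$; the two effects cancel, exactly as they do for the mirror bundles $\check L_i$ themselves. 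Conversely, a smooth section of $\check L_1^*\otimes\check L_2$ restricts on each fiber $\check p^{-1}(x)=\bb{R}^n/\bb{Z}^n$ to a smooth function, whose Fourier coefficients decay rapidly and depend smoothly on $x$ with all bounds locally uniform, so $\cu{F}^{-1}$ does land in $A^{\bullet}_{r.d.}$. Finally, $\int_{\check p^{-1}(x)}e^{2\pi i(m-m',\check y)}\,d\check y=\delta_{m,m'}$ together with the cancellation of the prefactors $(-\hbar/2i)^{\pm|\alpha|}$ gives $\cu{F}^{-1}\circ\cu{F}=\mathrm{id}$ and $\cu{F}\circ\cu{F}^{-1}=\mathrm{id}$.

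For the chain-map identity I would compute both sides in the complex coordinates $z_j=\check y^j+i\hbar^{-1}x^j$ on $\check X$. On $P(L_1,L_2)$ one has $d_{W,\hbar}\alpha=d\alpha+2\pi(d\cu{A})\wedge\alpha$ with $d\cu{A}=\hbar^{-1}\sum_j(\xi^{(2)}_j-\xi^{(1)}_j+m_j)\,dx^j$ on the sheet labelled by $m$, so the coefficient of $dx^j\wedge dx^I$ in $d_{W,\hbar}\alpha$ is $\partial_{x^j}\alpha_I+2\pi\hbar^{-1}(\xi^{(2)}_j-\xi^{(1)}_j+m_j)\alpha_I$. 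On the Dolbeault side one has $\partial_{\bar z_j}=\tfrac12\bigl(\partial_{\check y^j}+i\hbar\,\partial_{x^j}\bigr)$, and in the frame $\check 1_1^*\otimes\check 1_2$ the $(0,1)$-part of the connection on $\check L_1^*\otimes\check L_2$ coming from $\nabla_{\cu{P}}$ is $\pi i\sum_j(\xi^{(2)}_j-\xi^{(1)}_j)\,d\bar z_j$; applying $\dbar_\hbar$ to the $m$-th summand of $\cu{F}(\alpha)$ and using $\partial_{\check y^j}e^{2\pi i(m,\check y)}=2\pi i m_j\,e^{2\pi i(m,\check y)}$ produces the factor $\tfrac{i\hbar}{2}\bigl(\partial_{x^j}\alpha_I+2\pi\hbar^{-1}(\xi^{(2)}_j-\xi^{(1)}_j+m_j)\alpha_I\bigr)$ multiplying $e^{2\pi i(m,\check y)}\,d\bar z_j\wedge d\bar z^I\otimes\check 1_1^*\otimes\check 1_2$. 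Since $\tfrac{i\hbar}{2}=-\tfrac{\hbar}{2i}$, summing over $m$, $j$ and $I$ and matching the powers of $(-\hbar/2i)$ yields $\dbar_\hbar\circ\cu{F}=\cu{F}\circ d_{W,\hbar}$; in fact the normalizing constant $-\hbar/2i$ in the definition of $\cu{F}$ is chosen precisely so that this balances. Combined with the previous paragraph, this gives the asserted isomorphism of differential graded vector spaces.

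The part I expect to require the most care is not the chain-map computation, which is a short finite calculation once $\nabla_{\cu{P}}$ is written in the frame $\check 1$, but the analytic bookkeeping in the first step: confirming that ``rapidly decaying'' is exactly the hypothesis that legitimizes differentiation under the infinite sum and makes $\cu{F}(\alpha)$ smooth, that symmetrically $\cu{F}^{-1}$ of an arbitrary smooth bundle-valued $(0,\bullet)$-form is again rapidly decaying, and that the equivariance of the various factors forces $\cu{F}(\alpha)$ to descend to an honest global section of $\check L_1^*\otimes\check L_2$ rather than merely a local expression on a single affine chart.
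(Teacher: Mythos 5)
Your proposal is correct and the heart of it — the chain-map computation in the frame $\check 1_1^*\otimes\check 1_2$ using $\partial_{\bar z_j}=\tfrac12(\partial_{\check y^j}+i\hbar\,\partial_{x^j})$, with the prefactor $(-\hbar/2i)$ absorbing the normalization — is essentially the computation the paper carries out. The paper's proof consists only of this chain-map calculation, since well-definedness and mutual inverseness of $\cu{F}$ and $\cu{F}^{-1}$ are asserted just before the proposition; you spell out those analytic and equivariance details explicitly, which is a welcome supplement rather than a different route.
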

\begin{proof}
For $\alpha\in A^{\bullet}_{r.d.}(P(L_1,L_2))$, we compute that
\begin{align*}
  & \cu{F}(d_{W,\hbar}\alpha)\\
= & \left(-\frac{\hbar}{2i}\right)^{|\alpha|+1}\sum_{m\in\bb{Z}^n}\sum_I\sum_{k=1}^n\left(\pd{\alpha_I}{x^k}+2\pi\hbar^{-1}\left(\xi_1^k-\xi_2^k+m_k\right)\alpha_I\right)e^{2\pi i(m,\check{y})}d\bar{z}^k\wedge d\bar{z}^I\otimes\check{1}_1^*\otimes\check{1}_2\\
= & \left(-\frac{\hbar}{2i}\right)^{|\alpha|+1}\sum_{m\in\bb{Z}^n}\sum_I\sum_{k=1}^n\hbar^{-1}\left(\hbar\pd{}{x^k}\alpha_Ie^{2\pi i(m,\check{y})}-i\pd{}{\check{y}^k}\alpha_Ie^{2\pi i(m,\check{y})}\right)d\bar{z}^k\wedge d\bar{z}^I\otimes\check{1}_1^*\otimes\check{1}_2\\
  & \qquad + \left(-\frac{\hbar}{2i}\right)^{|\alpha|+1}\sum_{m\in\bb{Z}^n}\sum_I\sum_{k=1}^n2\pi\hbar^{-1}\left(\xi_1^k-\xi_2^k\right)\alpha_Ie^{2\pi i(m,\check{y})}d\bar{z}^k\wedge d\bar{z}^I\otimes\check{1}_1^*\otimes\check{1}_2\\
= & \left(\dbar+i\pi\sum_{k=1}^n(\xi_2^k-\xi_1^k)d\bar{z}^k\right)\cu{F}(\alpha)\\
= & \dbar_{\hbar}\cu{F}(\alpha),
\end{align*}
as desired.
\end{proof}

In order to connect ($S^1$-valued) Morse theory of $(P(L_1,L_2),\cu{A})$ with the de Rham model $(A^{\bullet}(P(L_1,L_2)),d_W)$, we apply the idea of Witten deformations, or Witten-Morse theory \cite{Witten_Morse}.

For a Morse function $f:M\to\bb{R}$ defined on a closed Riemannian manifold $(M,g)$, Witten considered the twisted differential
$$d_{f,\hbar}:=e^{-f/\hbar}de^{f/\hbar}=d+\hbar^{-1}df\wedge.$$
According to Witten-Morse theory, when $\hbar>0$ is small enough, there is a 1-1 correspondence between index $k$ critical points of $f$ and small eigenforms of degree $k$, concentrated at $p$:
\begin{equation*}
\xymatrix{
{Span_{\bb{R}}\left(Crit^k(f)\right)} \ar@{->}[r]^{\quad\sim} & {\Omega_{sm}^k(M,f)}
},\quad  p\mapsto\alpha_{p,\hbar},
\end{equation*}
where
$$\Omega_{sm}^k(M,f):=\left\{\alpha_{\hbar}\in\Omega^k(M):\Delta_{\hbar}\alpha_{\hbar}=\lambda_{\hbar}\alpha_{\hbar},\lim_{\hbar\to 0}\lambda_{\hbar}=0\right\},$$
and $\Delta_{\hbar}$ is the Witten Laplacian with respect to the metric $g$. The Morse cohomology of the pair $(M,f)$ is defined to be the cohomology of
$$(Span_{\bb{R}}\left(Crit^{\bullet}(f)\right),\delta),$$
where $\delta$ is defined by counting gradient flow lines from index $k$ critical points to index $k+1$ critical points. Witten also argued that the assignment
$$p\mapsto\alpha_{p,\hbar}$$
gives an isomorphism between the Morse cohomology and the cohomology of the Witten complex
$$(\Omega_{sm}^{\bullet}(M,f),d_{f,\hbar}),$$
which is nothing but the de Rham cohomology.

In Section \ref{ch:SYZ_geometric_quantization}, we will apply Witten-Morse theory to $(P(L_1,L_2),\cu{A})$ to solve the quantization problem.

\begin{remark}
On the A-side, morphisms between two Lagrangian submanifolds should be given by their Floer complex. But here we are using a twisted version of the de Rham complex as the morphism space. Let us clarify their (conjectural) relations.

As we mentioned in Remark \ref{rem:Maslov_Morse}, our Lagrangian sections $L_1,L_2$ are graded and the Maslov index of an intersection point between $L_1$ and $L_2$ coincides with the Morse index of the corresponding critical point of the area function $\cu{A}$ by \cite[Remark 13]{KS_HMS_torus_fibration}. It was further pointed out in \cite[Section 5.2]{KS_HMS_torus_fibration} that the Morse complex of $(P(L_1,L_2),\cu{A})$ (together with higher products) is an approximation of the Floer complex $CF^\bullet(L_1,L_2)$ when $\hbar$ is small (based on results of Floer \cite{Floer88} and Fukaya-Oh \cite{FO} in the case of cotangent bundles).

On the other hand, Witten's original work \cite{Witten_Morse} together with the recent work of Chan, Leung and Ma \cite{CLM_Witten-Morse} (see also \cite{Ma_thesis}) proved that the Morse complex is also an approximation of the twisted de Rham complex of $(P(L_1,L_2),\cu{A})$ when $\hbar$ is small.

In view of these results, we expect that the Floer complex $CF^\bullet(L_1,L_2)$ is quasi-isomorphic to the twisted de Rham complex of $(P(L_1,L_2),\cu{A})$. This is why we use the term ``morphisms'' in this section.

\end{remark}


\section{Geometric quantization and the quantization problem}

In this section, we apply SYZ transforms to solve the quantization problem for Lagrangian torus fibrations over compact complete integral affine manifolds and projective toric manifolds.

\subsection{Prequantum line bundles and polarizations}

We first recall the precise statement of the quantization problem.

\begin{definition}\label{def:prequantum}
A \emph{prequantum line bundle} on a K\"ahler manifold $(M, J, \omega)$ is a pair $(\cu{L},\nabla)$, where $\cu{L}$ is complex line bundle on $M$ and $\nabla$ is a complex integrable connection satisfying
$$\frac{i}{2\pi}F_{\nabla}=\omega.$$
In particular, $\cu{L}$ is an ample holomorphic line bundle.
\end{definition}

\begin{definition}\label{def:pola}
A \emph{polarization} is an integrable Lagrangian subbundle $P\subset T_{\bb{C}}M$ meaning that $rk(P)=n$ and $[P,P]\subset P$ and $\omega|_P=0$.
The space of all polarized sections of $\cu{L}$ is defined to be
$$\Gamma_P(M,\cu{L}) := \{s\in\overline{\Gamma}_{sm}(M,\cu{L}) : \nabla_vs = 0 \text{ for all } v\in P\},$$
where $\overline{\Gamma}_{sm}(M,\cu{L})$ denotes the $L^2$-completion of the space of smooth sections of $\cu{L}$.
\end{definition}

There are two typical choices of polarization, namely, the real and complex polarizations.

Given a Lagrangian fibration $f:M\to B$ of $M$ over an integral affine manifold $B$, we define
$$P_{\bb{R}}:=\ker(f_*:TM\to f^*TB)\otimes\bb{C}.$$

\begin{lemma}
$P_{\bb{R}}$ is a polarization.
\end{lemma}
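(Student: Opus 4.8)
The plan is to verify the three defining properties of a polarization directly from the definition of $P_{\bb{R}}$, working in local affine coordinates $(x^j, y_j)$ (or $(x^j, \check y^j)$) adapted to the Lagrangian fibration $f : M \to B$. Here I treat $(M, J, \omega)$ as one of the K\"ahler manifolds built above, say $\check X = TB/\Lambda$ with $\omega = \check\omega_g$, so that the fibers of $f = \check p$ are the tori $\{x = \text{const}\}$ and the vertical bundle $\ker(f_*)$ is spanned over $\bR$ by the fiber directions $\partial/\partial\check y^j$.

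First I would compute the rank: $\ker(f_* : TM \to f^*TB)$ is a real subbundle of $TM$ of rank $n$ (the fiber dimension), being spanned at each point by $\partial/\partial\check y^1, \dots, \partial/\partial\check y^n$; hence its complexification $P_{\bb{R}}$ has complex rank $n = \tfrac12 \dim_{\bR} M$, which is the required Lagrangian dimension. Second, involutivity $[P_{\bb{R}}, P_{\bb{R}}] \subset P_{\bb{R}}$: since $\ker(f_*)$ is the tangent distribution to the fibers of the submersion $f$, it is automatically integrable (its integral manifolds are the fibers), so $[\ker(f_*), \ker(f_*)] \subset \ker(f_*)$; complexifying and extending the bracket $\bC$-bilinearly preserves this containment. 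Concretely, the $\partial/\partial\check y^j$ are coordinate vector fields, so their pairwise brackets vanish. Third, $\omega|_{P_{\bb{R}}} = 0$: because $f$ is a \emph{Lagrangian} fibration, the fibers are Lagrangian submanifolds, i.e. $\omega$ restricted to $\ker(f_*)$ vanishes; explicitly $\check\omega_g = \sum_{j,k} g_{jk}\, d\check y^j \wedge dx^k$ evaluated on two vertical vectors $\partial/\partial\check y^a, \partial/\partial\check y^b$ gives $0$ since every term contains a $dx^k$. Extending $\bC$-bilinearly, $\omega$ still vanishes on $P_{\bb{R}} = \ker(f_*) \otimes \bC$.

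There is essentially no obstacle here: this lemma is purely formal, and the only "content" is the observation that the hypothesis "Lagrangian fibration" is exactly what forces $\omega|_{P_{\bb{R}}} = 0$, while "fibration" (submersion) forces involutivity, and the dimension count is immediate. If one wanted to be careful about one subtle point, it would be the bilinear extension step: a complex subbundle of $T_{\bC}M$ of the form $E \otimes \bC$ with $E$ a real involutive subbundle is again involutive, and a real $2$-form vanishing on $E$ vanishes on $E \otimes \bC$ after $\bC$-bilinear extension --- both are standard linear-algebra facts that I would state in one sentence rather than belabor. So the proof is a three-line check: rank $= n$; fibers integrable; fibers Lagrangian.
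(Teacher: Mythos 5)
Your proof is correct and takes essentially the same approach as the paper's: rank $n$ and $\omega|_{P_{\bb R}}=0$ follow directly from the fibration being Lagrangian, and involutivity follows from the local form of vertical vector fields (coordinate fields along the fibers). The extra remarks you add about $\bb C$-bilinear extension are harmless elaborations of the same three-line check.
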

\begin{proof}
Since $f:M\to B$ is a Lagrangian fibration, $P_{\bb{R}}$ is of rank $n$ and $\omega|_{P_{\bb{R}}}=0$. Let $(x^j)$ be coordinate of $B$ and $(y_j)$ be fiber coordinates. Sections of $P_{\bb{R}}$ is of form
$$\sum_{j=1}^ng^j(x,y)\pd{}{y_j},$$
and it follows that $[P_{\bb{R}},P_{\bb{R}}] \subset P_{\bb{R}}$.
\end{proof}

\begin{definition}
$P_{\bb{R}}$ is called the \emph{real polarization}.
\end{definition}

\begin{remark}
The terminology ``real polarization'' is justified by the property that $\overline{P}_{\bb{R}}=P_{\bb{R}}$.
\end{remark}

\begin{definition}\label{def:BS}
A submanifold $S$ of $M$ is called \emph{Bohr-Sommerfeld} if $\nabla|_S = d$.
\end{definition}

\begin{proposition}[\'Sniatycki \cite{cohomology_GQ}]\label{prop:BS_fiber_pola}
There is a canonical identification
$$\Gamma_{P_{\bb{R}}}(M,\cu{L}) \cong \bigoplus_{F_x:\text{BS fiber}}\bb{C}\cdot F_x.$$
\end{proposition}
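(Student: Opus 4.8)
The plan is to work fiber by fiber. A real polarized section $s \in \Gamma_{P_{\bb{R}}}(M,\cu{L})$ is, by Definition \ref{def:pola}, covariantly constant along the vertical directions $\pd{}{y_j}$. Restricting to a single fiber $F_x = f^{-1}(x)$, this says precisely that $s|_{F_x}$ is a flat section of $(\cu{L}|_{F_x}, \nabla|_{F_x})$. Since $F_x$ is a torus and a flat line bundle on a torus admits a nonzero (global) flat section if and only if its holonomy is trivial, i.e.\ if and only if $F_x$ is a Bohr-Sommerfeld fiber, we get that $s|_{F_x} = 0$ unless $F_x$ is Bohr-Sommerfeld; and when $F_x$ is Bohr-Sommerfeld the space of flat sections of $\cu{L}|_{F_x}$ is one-dimensional over $\bb{C}$. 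So the first step is to set up this fiberwise dichotomy carefully, spelling out that the holonomy character of $\nabla|_{F_x}$ is computed by integrating the connection form around the $n$ generating loops of $\pi_1(F_x)$ and that triviality of this character is exactly the condition $\nabla|_{F_x} = d$ of Definition \ref{def:BS}.

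Next I would assemble these fiberwise statements into a global map. Given $s \in \Gamma_{P_{\bb{R}}}(M,\cu{L})$, the support of $s$ (inside $M$) is contained in the union of Bohr-Sommerfeld fibers; restricting $s$ to each Bohr-Sommerfeld fiber $F_x$ and comparing with a fixed choice of generator of the one-dimensional space of flat sections there yields a complex number $c_x(s)$. This defines a linear map $\Gamma_{P_{\bb{R}}}(M,\cu{L}) \to \bigoplus_{F_x \,:\, \text{BS}} \bb{C}\cdot F_x$, $s \mapsto (c_x(s))_x$; the sum is finite because the Bohr-Sommerfeld condition is a discrete (integrality) condition on $x \in B$ — here one uses that $B$ is an integral affine manifold and that the holonomy around the vanishing cycles varies affinely, so the Bohr-Sommerfeld locus is a discrete subset of $B$, finite if $B$ is compact. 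Injectivity is immediate since $s$ is determined on its support by the numbers $c_x(s)$. For surjectivity, one must produce, for each Bohr-Sommerfeld fiber, an actual element of the $L^2$-completion $\overline{\Gamma}_{sm}(M,\cu{L})$ that is polarized and supported on that fiber: this is a distributional section (a ``delta-function section'' along $F_x$ times the flat frame), which is why Definition \ref{def:pola} takes the $L^2$-completion rather than smooth sections — one checks it lies in $\overline{\Gamma}_{sm}(M,\cu{L})$ and satisfies $\nabla_v s = 0$ for $v \in P_{\bb{R}}$ in the distributional sense.

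The main obstacle is this last point: making rigorous sense of the polarized sections as distributional objects and verifying that the map is genuinely an isomorphism onto the span of the Bohr-Sommerfeld fibers, rather than just a surjection from some formal span. This is precisely the content of \'Sniatycki's theorem \cite{cohomology_GQ}, and the cleanest route is to invoke it directly; if a self-contained argument is wanted, one replaces ``flat section on $F_x$'' by ``flat distributional section of $\cu{L}$ on $M$ supported on $F_x$'', uses local coordinates $(x^j, y_j)$ in which $\nabla = d - 2\pi i \sum_j \xi_j(x)\, dy_j$ (the vertical part), solves $\nabla_{\partial/\partial y_j} s = 0$ explicitly on $F_x$, and checks the patching and the $L^2$-completion membership by a partition-of-unity and Fourier-series argument on the torus fibers. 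I expect the analytic bookkeeping of the distributional sections and the completion to be the only genuinely delicate part; everything else is linear algebra fiber by fiber.
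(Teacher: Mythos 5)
The paper does not prove this proposition at all: it is stated with an attribution to \'Sniatycki and used as a black box, so there is no internal proof for your proposal to be compared against. With that caveat, your sketch is a faithful account of the standard argument that underlies the citation: the fiberwise dichotomy (flat section on a torus exists iff holonomy is trivial), discreteness of the Bohr--Sommerfeld locus coming from the integral affine structure on $B$, and the observation that the honest content is the production and uniqueness of distributional polarized sections supported on each Bohr--Sommerfeld fiber. You have correctly located the delicate point.

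One technical remark you should keep in mind, though, since you flagged it yourself as the step needing care: a delta-type section supported on a codimension-$n$ torus fiber inside a $2n$-dimensional manifold is \emph{not} square-integrable, so it does not literally lie in the $L^2$-completion $\overline{\Gamma}_{sm}(M,\cu{L})$ as Definition \ref{def:pola} is worded. If one reads that definition strictly, $\Gamma_{P_{\bb{R}}}(M,\cu{L})$ would be zero. \'Sniatycki's actual theorem is a statement about the cohomology of the sheaf of polarized sections for the real polarization: that cohomology is concentrated in degree equal to the rank of the fiber (here $n$), and its dimension there equals the number of Bohr--Sommerfeld fibers. The identification written in Proposition \ref{prop:BS_fiber_pola} is the common informal repackaging of this, implicitly either enlarging to distributional sections or relabeling the degree-$n$ cohomology as ``the space of real polarized sections.'' So the fix to your last paragraph is not ``check membership in $\overline{\Gamma}_{sm}(M,\cu{L})$'' (which will fail), but rather to either (a) replace the $L^2$-completion by an appropriate space of distributional sections transverse to the fibers, or (b) state and invoke \'Sniatycki's sheaf-cohomological result directly, as you suggest. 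Everything else in your outline -- the holonomy computation via the connection $1$-form on each fiber, the one-dimensionality of the flat sections on a BS fiber, injectivity by restriction, and finiteness of the BS locus when $B$ is compact -- is correct and matches the standard treatment.
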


Next, we define
$$P_{\bb{C}}:=T^{0,1}M=\{v\in T_{\bb{C}}M:\check{J}(v)=-iv\}.$$

\begin{lemma}
$P_{\bb{C}}$ is a polarization.
\end{lemma}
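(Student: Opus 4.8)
The claim to prove is that $P_{\bb{C}} := T^{0,1}M = \{v \in T_{\bb{C}}M : \check{J}(v) = -iv\}$ is a polarization, meaning (by Definition \ref{def:pola}) it is an integrable Lagrangian subbundle: $\mathrm{rk}(P_{\bb{C}}) = n$, $[P_{\bb{C}}, P_{\bb{C}}] \subset P_{\bb{C}}$, and $\omega|_{P_{\bb{C}}} = 0$.

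\begin{proof}[Proof sketch / proposal]
The plan is to verify the three defining conditions in turn, using standard complex-geometry facts about a K\"ahler manifold $(M, J, \omega)$. First, $P_{\bb{C}} = T^{0,1}M$ is by definition the $(-i)$-eigenbundle of the $\bb{C}$-linear extension of $J$ to $T_{\bb{C}}M$; since $J^2 = -\mathrm{id}$, the bundle $T_{\bb{C}}M$ splits as $T^{1,0}M \oplus T^{0,1}M$ into the $(+i)$- and $(-i)$-eigenbundles, each of complex rank $n = \tfrac{1}{2}\dim_{\bb{R}} M$, so $\mathrm{rk}(P_{\bb{C}}) = n$. Second, involutivity $[P_{\bb{C}}, P_{\bb{C}}] \subset P_{\bb{C}}$ is precisely the statement that the almost complex structure $J$ is integrable (the Newlander--Nirenberg condition, or equivalently the vanishing of the Nijenhuis tensor), which holds since $M$ is a complex (K\"ahler) manifold; concretely, in holomorphic coordinates $P_{\bb{C}}$ is spanned by the $\partial/\partial\bar{z}^j$, whose Lie brackets vanish. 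Third, $\omega|_{P_{\bb{C}}} = 0$ follows from the fact that on a K\"ahler (indeed any almost-Hermitian) manifold the fundamental form $\omega(\cdot,\cdot) = g(J\cdot,\cdot)$ is of type $(1,1)$, so its contraction with two vectors of type $(0,1)$ vanishes: for $v, w \in T^{0,1}M$, $\omega(v,w) = 0$ because $\omega$ has no $(0,2)$-component.

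I would write this out in local holomorphic coordinates $(z^1, \dots, z^n)$ adapted to $\check{J}$ for concreteness and to parallel the style of the earlier lemma on $P_{\bb{R}}$: take $P_{\bb{C}}$ to be spanned pointwise by $\partial/\partial\bar{z}^1, \dots, \partial/\partial\bar{z}^n$, observe rank $n$ immediately, note $[\partial/\partial\bar{z}^j, \partial/\partial\bar{z}^k] = 0$ for involutivity, and compute $\omega = i \sum_{j,k} h_{j\bar{k}}\, dz^j \wedge d\bar{z}^k$ (a $(1,1)$-form by the K\"ahler condition), so that $\omega(\partial/\partial\bar{z}^j, \partial/\partial\bar{z}^k) = 0$.

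There is no serious obstacle here — this is a standard fact, and the only thing to be careful about is making the logical dependence on integrability of $\check{J}$ explicit (so that $[P_{\bb{C}}, P_{\bb{C}}] \subset P_{\bb{C}}$ is not taken for granted) and noting that it is the $(1,1)$-type of $\omega$, a consequence of compatibility of $\check{\omega}$ and $\check{J}$ in the K\"ahler structure, that gives the Lagrangian (isotropy) condition. If anything counts as the ``hard part,'' it is merely citing Newlander--Nirenberg cleanly rather than re-proving it.
\end{proof}
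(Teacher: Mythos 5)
Your proposal is correct and follows exactly the same three-step argument as the paper: rank $n$ from the eigenspace decomposition of $\check{J}$, involutivity from integrability of the complex structure, and isotropy from $\omega$ being of type $(1,1)$. The paper states these three facts very tersely; you merely spell out the same reasoning in more detail (with the coordinate description via $\partial/\partial\bar{z}^j$), so there is no substantive difference.
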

\begin{proof}
Clearly $rk(P_{\bb{C}})=n$. Since the complex structure of $M$ is integrable, we have $[P_{\bb{C}},P_{\bb{C}}]\subset P_{\bb{C}}$. Finally, $\omega$ is a K\"ahler form which is of $(1,1)$-type, so we have $\omega|_{P_{\bb{C}}}=0$.
\end{proof}

\begin{definition}
$P_{\bb{C}}$ is called the \emph{complex polarization}.
\end{definition}

The space of complex polarized sections is nothing but the space of holomorphic sections of $\cu{L}$:
$$\Gamma_{P_{\bb{C}}}(M,\cu{L}) = H^0(M, \cu{L}).$$

In geometric quantization, physicists ask whether a quantum theory is independent of the choice of polarizations. In other words, for two given polarizations $P,P'$, they are asking if the spaces of polarized sections are canonically isomorphic to each other. This is commonly known as the \emph{quantization problem}. In particular, for a K\"ahler manifold admitting a Lagrangian fibration, we expect that the real and complex polarizations should give canonically isomorphic spaces of polarized sections.

\section{Main results}

We prove that, in the case of semi-flat Lagrangian torus fibrations over compact complete bases and projective toric manifolds, the spaces of real and complex polarized sections are canonically isomorphic via the SYZ transforms defined in Section \ref{sec:SYZ}. The proofs also use Witten-Morse theory on the fiberwise geodesic path spaces.

\subsection{Semi-flat Lagrangian torus fibrations}\label{ch:SYZ_geometric_quantization}

Throughout this section, $B$ is assumed to be an $n$-dimensional compact special integral affine manifold without boundary. We also assume $\hbar=1$.

Choose a Hessian type metric $g$ on $B$ and let $\check{X}:=TB/\Lambda$, which is equipped with the natural complex structure $\check{J}$. By Proposition \ref{prop:sympl_B}, $(\check{X},\check{J},\check{\omega}_g)$ is a K\"ahler manifold (in fact a Calabi-Yau manifold).

\begin{definition}\label{def:integral_metric}
A Hessian type metric $g$ is said to be \emph{integral} if there exists an open cover $\{U\}$ of $B$ such that on each non-empty overlap $U\cap V$, the potential functions $\phi_U:U\to\bb{R}$, $\phi_V:V\to\bb{R}$ of $g$ satisfy
$$\phi_U(x_U(x_V))-\phi_V(x_V)=\inner{m_{UV},x_V}+a_{UV},$$
for some $m_{UV}\in\bb{Z}^n$ and $a_{UV}\in\bb{R}$.
\end{definition}

Let $g$ be an integral Hessian type metric on $B$. Then we can define a Lagrangian section $L_g$ of the dual fibration $p:X\to B$ by
$$L_g:=\{(x,d\phi(x))\in X:x\in B\}.$$
The integrality condition on $g$ ensures that $L_g$ is independent of the potential function $\phi$. Hence it is a well-defined Lagrangian section of $p:X\to B$.

The SYZ transform $\check{L}_g$ of $L_g$ has connection
$$\nabla_{\check{L}_g}=d+2\pi i\sum_{j=1}^n\pd{\phi}{x^j}d\check{y}_j.$$
Moreover, the curvature of $\nabla_{\check{L}_g}$ satisfies
$$\frac{i}{2\pi}F_{\nabla_{\check{L}_g}}=-\sum_{j,k}\frac{\partial^2\phi}{\partial x^j\partial x^k}dx^j\wedge d\check{y}_k=\check{\omega}_g.$$
Hence we obtain the following

\begin{proposition}
The SYZ transform $(\check{L}_g,\nabla_{\check{L}_g})$ of the Lagrangian section $L_g$ is a prequantum line bundle on $\check{X}$.
\end{proposition}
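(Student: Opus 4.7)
The plan is to check the three requirements of Definition \ref{def:prequantum} for $(\check{L}_g,\nabla_{\check{L}_g})$ on the K\"ahler manifold $(\check{X},\check{J},\check{\omega}_g)$: (a) $\check{L}_g$ is a well-defined complex line bundle on $\check{X}$, (b) $\nabla_{\check{L}_g}$ is a complex integrable connection, and (c) $\tfrac{i}{2\pi}F_{\nabla_{\check{L}_g}}=\check{\omega}_g$. The curvature computation (c) is essentially carried out in the paragraph preceding the proposition, so the task reduces mostly to verifying (a) and (b) and then repackaging that calculation.

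First I would unravel the SYZ transform of $L_g$ in local coordinates. By definition, $\check{L}_g=(\pi_{\check{X}})_*\bigl((\xi_g\times id_{\check{X}})^*\cu{P}\bigr)$ with $\xi_g(x)=d\phi(x)$, and in the trivialization $\check{1}$ the Poincar\'e connection reads $d+2\pi i(y,d\check{y})$. Substituting $y_j=\partial\phi/\partial x^j$ yields the local connection form
\begin{equation*}
\nabla_{\check{L}_g}=d+2\pi i\sum_{j=1}^n\frac{\partial\phi}{\partial x^j}\,d\check{y}_j,
\end{equation*}
exactly as asserted in the text.

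Next I would establish (a). On the intersection $U\cap V$ the potential functions differ by $\phi_U-\phi_V=\inner{m_{UV},x_V}+a_{UV}$ with $m_{UV}\in\bb{Z}^n$ and $a_{UV}\in\bb{R}$, by the hypothesis that $g$ is integral. Therefore $d\phi_U-d\phi_V=m_{UV}\in\Lambda^*$, which means the sections $x\mapsto d\phi_U(x)$ and $x\mapsto d\phi_V(x)$ descend to the same section of $X=T^*B/\Lambda^*$; this is the reason $L_g$ is globally well-defined. The induced transition cocycle of $\check{L}_g$ arises from the residual $\Lambda^*$-action recorded in Section \ref{sec:SYZ}, which twists the local frame by $e^{-2\pi i(m_{UV},\check{y})}$ multiplied by the constant phase $e^{2\pi i a_{UV}}$; these are genuine $U(1)$-valued cocycles on $\check{X}$, so $\check{L}_g$ is a well-defined Hermitian line bundle and the connection forms glue compatibly.

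For (b), the integrability condition $(\nabla_{\check{L}_g}^2)^{0,2}=0$ is an immediate consequence of Proposition \ref{prop:sec_line}, since $L_g$ was constructed so as to be a Lagrangian section of $p:X\to B$. This is what endows $\check{L}_g$ with a holomorphic structure. Finally, for (c) I would simply reproduce the short curvature computation already displayed in the excerpt:
\begin{equation*}
\tfrac{i}{2\pi}F_{\nabla_{\check{L}_g}}=-\sum_{j,k}\frac{\partial^2\phi}{\partial x^j\partial x^k}\,dx^j\wedge d\check{y}_k=\sum_{j,k}g_{jk}\,d\check{y}^k\wedge dx^j=\check{\omega}_g,
\end{equation*}
where the last equality uses symmetry of the Hessian and the definition of $\check{\omega}_g$ in Proposition \ref{prop:sympl_B}. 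I do not anticipate any serious obstacle here; the only subtlety is the bookkeeping in part (a) to confirm that the integrality condition on $g$ is exactly what is needed both to make $L_g$ descend and to make the $U(1)$-transition cocycle of $\check{L}_g$ well-defined on the quotient $\check{X}=TB/\Lambda$.
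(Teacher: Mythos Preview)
Your proposal is correct and follows the paper's approach: the paper's entire argument is the curvature computation you reproduce in (c), stated in the paragraph immediately preceding the proposition, with the well-definedness of $L_g$ and the integrability of $\nabla_{\check{L}_g}$ taken as already established. Your parts (a) and (b) simply make explicit what the paper leaves implicit (one small quibble: since $a_{UV}$ drops out after differentiating $\phi$, the transition cocycle is just $e^{-2\pi i(m_{UV},\check{y})}$ with no extra constant phase, though this does not affect the argument).
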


A key observation is given by the following

\begin{proposition}\label{prop:BS_fiber_int_pt}
With respect to the prequantum line bundle $(\check{L}_g,\nabla_{\check{L}_g})$, there is a 1-1 correspondence between Bohr-Sommerfeld fibers of $\check{p}:\check{X}\to B$ and intersection points of $L_0$ and $L_g$.
\end{proposition}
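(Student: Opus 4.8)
The plan is to compute explicitly, using the local description of the connection $\nabla_{\check{L}_g} = d + 2\pi i \sum_j \tfrac{\partial \phi}{\partial x^j}\, d\check{y}_j$, when the restriction of this connection to a fiber $\check{F}_x = \check{p}^{-1}(x)$ is gauge-equivalent to the trivial connection $d$, and then to match this condition against the equation $d\phi(x) \in \Lambda^*_x$ that characterizes the intersection $L_0 \cap L_g$ over $x$. First I would fix an affine chart $U \ni x$ with affine coordinates $(x^j)$ and fiber coordinates $(\check{y}^j)$ on $\check{X}$, so that on the fiber $\check{F}_x$ the connection becomes $d + 2\pi i \sum_j \tfrac{\partial \phi}{\partial x^j}(x)\, d\check{y}^j$, i.e. a \emph{constant} connection one-form with coefficients $c_j := \tfrac{\partial \phi}{\partial x^j}(x)$. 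A flat connection on the torus $\check{F}_x = \mathbb{R}^n/\mathbb{Z}^n$ of this form is gauge-trivial (Bohr-Sommerfeld, $\nabla|_{\check{F}_x} = d$ after a change of frame) precisely when its holonomy around every cycle is trivial, which happens iff $c_j \in \mathbb{Z}$ for all $j$, i.e. iff $d\phi(x) = (c_1,\dots,c_n) \in \Lambda^*_x$.

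The second step is to identify $d\phi(x) \in \Lambda^*_x$ with the condition that $x$ is (the image of) a point of $L_0 \cap L_g$. By definition $L_g = \{(x, d\phi(x))\} \subset X = T^*B/\Lambda^*$ and $L_0$ is the zero section, so a point of $X$ over $x$ lies in both $L_0$ and $L_g$ exactly when $d\phi(x) \equiv 0 \pmod{\Lambda^*_x}$, which is the same condition; moreover, since $\phi$ is strictly convex, $d\phi$ is a local diffeomorphism, so for each such $x$ there is exactly one intersection point over it, giving the desired bijection $\{\check{F}_x : \text{BS}\} \leftrightarrow L_0 \cap L_g$. I would also remark that the integrality hypothesis on $g$ (Definition \ref{def:integral_metric}) is what makes the local conditions $d\phi_U(x) \in \Lambda^*$ patch to a well-defined global statement: the transition $\phi_U - \phi_V = \langle m_{UV}, x_V\rangle + a_{UV}$ changes $d\phi$ by the integral covector $m_{UV}$, hence preserves membership in $\Lambda^*$, consistent with $L_g$ being well-defined.

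The step I expect to require the most care is the first one — verifying that ``$\nabla_{\check{L}_g}$ restricts to the trivial connection on $\check{F}_x$'' in the sense of Definition \ref{def:BS} is genuinely equivalent to $d\phi(x) \in \Lambda^*_x$, rather than merely to flatness. Flatness of $\nabla_{\check{L}_g}|_{\check{F}_x}$ is automatic (the curvature $\check{\omega}_g$ vanishes on fibers of a Lagrangian fibration), so the content is entirely in the holonomy/gauge-triviality computation: one must check that the constant one-form $2\pi i \sum_j c_j\, d\check{y}^j$ on $\mathbb{R}^n/\mathbb{Z}^n$ can be written as $g^{-1} dg$ for a global unitary frame-change $g = e^{2\pi i \sum_j c_j \check{y}^j}$ on $\check{F}_x$ if and only if each $c_j \in \mathbb{Z}$, and that this is compatible with the descent of the Poincaré bundle $\mathcal{P}$ and its connection described in Section \ref{sec:SYZ} (in particular the remaining $\Lambda^* \oplus \{0\}$ action). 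Once the holonomy computation is pinned down, the rest is the bookkeeping in the two paragraphs above.
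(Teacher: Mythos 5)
Your proposal is correct and follows essentially the same route as the paper, whose one-line proof simply observes that $\nabla_{\check{L}_g}$ restricts to the trivial connection on $\check{F}_x$ if and only if $d\phi(x)\in\Lambda_x^*$, i.e.\ if and only if $L_0$ and $L_g$ meet over $x$; you have merely unpacked the holonomy computation and the integrality bookkeeping that the paper leaves implicit. One small inaccuracy: uniqueness of the intersection point over each such $x$ is automatic because $L_0$ and $L_g$ are both sections of $p$, and does not require invoking strict convexity of $\phi$ or that $d\phi$ is a local diffeomorphism.
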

\begin{proof}
The connection $\nabla_{\check{L}_g}$ is trivial on a fiber $\check{F}_x$ of $\check{p}:\check{X}\to B$ if and only if $d\phi(x)\in\Lambda_x^*$, that is, a point in $L_0\cap L_g$.
\end{proof}

In order to prove the next lemma, we need to introduce the following

\begin{definition}\label{def:complete}
An affine manifold is said to be \emph{complete} if its universal cover is diffeomorphic to $\bb{R}^n$ as affine manifold.
\end{definition}

\begin{remark}
There is a famous conjecture by Markus \cite{Markus_conjecture}, stating that any compact special affine manifold is complete. Hence if Markus' conjecture is true, then the completeness condition is automatic. since we have assumed $B$ to be a compact special integral affine manifold.
\end{remark}

\begin{lemma}\label{lem:decomposition}
Suppose the base $B$ is complete. Then we have a decomposition
$$P(L_0,L_g)=P\amalg\coprod_{p\in L_0\cap L_g}P_p(L_0,L_g),$$
where $P_p(L_0,L_g)$ is the connected component of $p$. Moreover, each $P_p(L_0,L_g)$ is contractible.
\end{lemma}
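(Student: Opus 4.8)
The plan is to analyze the covering space $P(L_0,L_g) \to B$ directly using the completeness hypothesis. Recall that $P(L_0,L_g) = \coprod_{x \in B} \pi_1(p^{-1}(x); L_0, L_g)$, and since $p^{-1}(x)$ is an $n$-torus, each fiber $\pi_1(p^{-1}(x); L_0, L_g)$ is a torsor over $\pi_1(p^{-1}(x)) \cong \bb{Z}^n$, so $P(L_0,L_g)$ is a covering of $B$ with fiber $\bb{Z}^n$ (possibly disconnected). Pulling back to the universal cover $\til{B}$, which by completeness is $\bb{R}^n$ as an affine manifold, the pulled-back covering trivializes: it becomes $\bb{R}^n \times \bb{Z}^n$. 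Thus the connected components of $P(L_0,L_g)$ are quotients of $\bb{R}^n$ by subgroups of the group of deck transformations, which acts by the combination of the affine holonomy action of $\pi_1(B)$ on $\bb{R}^n$ and its induced action on the $\bb{Z}^n$ labels.

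First I would make the path-homotopy classes explicit: over an affine chart with coordinates $(x^j)$, a geodesic path in $p^{-1}(x)$ from $L_0$ to $L_g$ in class $m = (m_1,\dots,m_n) \in \bb{Z}^n$ exists for every $m$, and the critical points of $\cu{A}$ (i.e.\ points of $L_0 \cap L_g$) are exactly those $(x,m)$ with $d\phi(x) = -m$ (up to sign conventions), using Proposition \ref{prop:BS_fiber_int_pt} and the formula $d\cu{A}(x,\gamma) = \sum_j(\xi_j^{(2)} - \xi_j^{(1)} + m_j)\,dx^j$. Next, on $\til{B} = \bb{R}^n$ the potential $\phi$ is globally defined (up to the $\bb{Z}$-affine ambiguity, which is exactly what the integrality condition controls), and $d\phi: \bb{R}^n \to \bb{R}^n$ is a diffeomorphism onto its image because $\phi$ is strictly convex (its Hessian is the positive-definite metric $g$). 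This is the key point: restricted to each sheet $\bb{R}^n \times \{m\}$ of the trivialized cover, the area function $\cu{A}$ (which does lift to $\til{B}$) is a strictly convex function $\cu{A}_m(x) = \phi(x) + \langle m, x\rangle + \text{const}$, hence has at most one critical point, and its sublevel sets are convex.

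Now I would identify the connected components of $P(L_0, L_g)$ downstairs. Each component $C$ is the image in $P(L_0,L_g)$ of an orbit of the deck group acting on $\bb{R}^n \times \bb{Z}^n$; equivalently $C = \bb{R}^n / \Gamma_C$ for a subgroup $\Gamma_C$ of the holonomy group acting freely and properly discontinuously on $\bb{R}^n$ (freely and properly because $B$ itself is $\bb{R}^n / \pi_1(B)$ with $\pi_1(B)$ acting this way, and $\Gamma_C$ is a subgroup). So far this only shows each component is a $K(\Gamma_C, 1)$; to get contractibility I must show $\Gamma_C$ is trivial, i.e.\ the component maps bijectively (not just as a covering) onto $B$ — with one exception. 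The components $P_p(L_0, L_g)$ containing a critical point $p$: since $\cu{A}$ descends to a well-defined \emph{function} on each such component (no monodromy, because the class $m$ is locally constant along the component and the integrality condition makes $\cu{A}$ single-valued — this needs the argument that the period of $d\cu{A}$ around any loop in the component vanishes, which follows from $d\phi$ being single-valued mod $\Lambda^*$ and the loop closing up), and $\cu{A}$ is strictly convex along the $\bb{R}^n$-direction with a critical point, strict convexity forces the component to be all of $\bb{R}^n$, i.e.\ $\Gamma_p = \{1\}$, giving contractibility. For the remaining ``garbage'' component $P$, I would \emph{not} claim contractibility — indeed Lemma \ref{lem:decomposition} only asserts contractibility of the $P_p$'s — but I do need to separate $P$ cleanly from the $P_p$'s: a component is of type $P_p$ iff it contains a critical point of $\cu{A}$ iff the convex function $\cu{A}_m$ on the corresponding sheet attains an interior minimum, and distinct critical points lie in distinct components because the component determines $m$ (locally constant) and $x$ is then determined by $d\phi(x) = -m$.

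\textbf{Main obstacle.} The technical heart — and the step I expect to require the most care — is proving that each component containing a critical point is \emph{simply connected}, equivalently that $\cu{A}$ is a genuine single-valued function there rather than merely having a well-defined differential. This is where the \emph{integrality} hypothesis on $g$ (Definition \ref{def:integral_metric}) and the \emph{completeness} of $B$ both get used essentially: completeness gives $\til{B} = \bb{R}^n$ so that strict convexity of $\phi$ globalizes, and integrality ensures the transition cocycle $\phi_U - \phi_V = \langle m_{UV}, x_V\rangle + a_{UV}$ has \emph{integral} linear part, so that the monodromy of $d\cu{A}$ around a loop — which a priori is a real number $\sum \oint (\xi^{(2)} - \xi^{(1)} + m)\,dx$ — lands in $\bb{Z}$ and in fact vanishes once one is on a single sheet $m = \text{const}$. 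I would organize this as: (i) lift everything to $\bb{R}^n$, (ii) show the deck group of the lifted path space acts on labels through $m \mapsto A(m)$ for $A \in SL(n,\bb{Z})$ plus the integral shift from $m_{UV}$, (iii) a component with a critical point is an orbit whose stabilizer fixes $(x_0, m_0)$ and acts by affine maps of $\bb{R}^n$ — but an affine transformation fixing a point of $\bb{R}^n$ and preserving the strictly convex function $\cu{A}_{m_0}$, hence its unique minimizer, and acting freely, must be the identity. Everything else (the explicit path classes, the identification of critical points with $L_0 \cap L_g$, the formula for the decomposition) is bookkeeping that follows from the setup already recorded in Section \ref{sec:morphism}.
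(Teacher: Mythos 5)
Your core argument is correct, but it takes a genuinely different route from the paper. The paper works intrinsically inside a component $P_p(L_0,L_g)$: completeness makes the universal cover of $P_p$ equal to $\bb{R}^n$, so any two points of $P_p$ are joined by an affine geodesic $(l(s),\gamma_{l(s)})$; along the associated geodesic disk, the function $f(s)=\sum_j\dot{l}^j(s)\bigl(\partial_j\phi(l(s))+m_j\bigr)$ vanishes at both endpoints (which are critical points of $\cu{A}$) yet has derivative $Hess(\phi)(\dot{l},\dot{l})>0$, a Rolle-type contradiction that rules out both a second critical point and a nontrivial geodesic loop in $P_p$. You instead lift the entire path space to the universal cover of $B$: $P(L_0,L_g)$ pulls back to $\bb{R}^n\times\bb{Z}^n$, each component is $\bb{R}^n/\Gamma_m$ for $\Gamma_m\subset\pi_1(B)$ the stabilizer of a sheet $\{m\}$, on that sheet $\cu{A}$ restricts to a globally defined strictly convex function $\cu{A}_m=\widetilde{\phi}+\langle m,\cdot\rangle$ with at most one critical point, and since $\Gamma_m$ preserves $d\cu{A}_m$ (hence fixes that critical point when it exists) and acts freely on $\bb{R}^n$, it must be trivial. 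Both proofs ultimately rest on positivity of $Hess(\phi)$ — the paper uses it pointwise along a geodesic, while you use it to get global convexity of $\cu{A}_m$ on $\bb{R}^n$; your version makes the covering-space structure of $P(L_0,L_g)$ explicit and isolates a clean group-theoretic mechanism (a free action fixing a point is trivial), at the cost of having to describe the $\pi_1(B)$-action on the $\bb{Z}^n$ labels, which the paper's intrinsic argument sidesteps entirely. Two minor inaccuracies worth flagging in your write-up: the stabilizer $\Gamma_m$ a priori fixes only the label $m_0$, not the pair $(x_0,m_0)$ — that it also fixes $x_0$ is the \emph{conclusion} of the convexity argument, not part of the setup; and the worry about whether $\cu{A}$ is single-valued on $P_p$ (vanishing periods) is a red herring for this lemma, since your argument needs only $\Gamma_m$-invariance of the 1-form $d\cu{A}_m$, which is automatic because $d\cu{A}$ is already a well-defined 1-form on $P(L_0,L_g)$.
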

\begin{proof}
Suppose $P_p(L_0,L_g)\cap P_q(L_0,L_g)\neq\phi$ with $p\neq q$. Since $P_p(L_0,L_g)$ and $P_q(L_0,L_g)$ are connected components, we have $P_p(L_0,L_g)=P_q(L_0,L_g)$. Since $B$ is complete and $P_p(L_0,L_g)$ is a covering of $B$, the universal cover $\pi:\til{P}_p\to P_p(L_0,L_g)$ of $P_p(L_0,L_g)$ is affinely diffeomorphic to $\bb{R}^n$. Hence the connection $\nabla$ pulls back to a connection on $\til{P}_p$ which is gauge equivalent to the trivial connection $d$. Since any two points in $\bb{R}^n$ can be join by a geodesic with respect to the trivial connection, so is $P_p(L_0,L_g)$. Let $(l(s),\gamma_{\l(s)})$ be a geodesic connecting $p$ and $q$. Then
$$u(s,t):= (l(s),\gamma_{l(s)}(t))$$
is a disk in $X$ with the properties
$$u(0,t)=p,\quad u(1,t)=q,\quad u(s,0)\in L_0,\quad u(s,1)\in L_g.$$
Consider the function $f:[0,1]\to\bb{R}$ defined by
$$f(s):=g_{X}\left(\pd{u}{s}(s,1),\pd{u}{t}(s,1)\right).$$
In an affine chart, it can be written as
$$f(s)=\sum_{j=1}^n\dot{l}^j(s)\left(\pd{\phi}{x^j}(l(s))+m_j\right).$$
Since $f(0)=f(1)=0$, there exists $s_0\in (0,1)$ such that $f'(s_0)=0$. In terms of the local expression of $f$, we have
$$\sum_{j=1}^n\ddot{l}^j(s_0)\left(\pd{u}{x^j}(l(s_0))+m_j\right)+\sum_{j,k=1}^n\frac{\partial^2\phi}{\partial x^j\partial x^k}(l(s_0))\dot{l}^j(s_0)\dot{l}^k(s_0)=0.$$
The path $l:[0,1]\to B$ is also a geodesic with respect to the flat connection $\nabla$, so $\ddot{l}^j(s_0)=0$ for all $j$. Hence
$$\sum_{j,k=1}^n\frac{\partial^2\phi}{\partial x^j\partial x^k}(l(s_0))\dot{l}^j(s_0)\dot{l}^k(s_0)=0.$$
But $\dot{l}(s_0)$ is a nonzero vector, this contradicts positivity of $Hess(\phi)$. Therefore, $P_p(L_0,L_g)\cap P_q(L_0,L_g)=\phi$ whenever $p\neq q$.

Let $[\alpha]$ be a homotopy class of loops of $P_p(L_0,L_g)$ based at $p$. Let $\til{p}\in\til{P}_p$ be a lift of $p$. Suppose $[\alpha]\neq 0$. Choose any loop $\alpha\in[\alpha]$. By the path lifting property, there is a lift $\til{\alpha}:[0,1]\to\til{P}_p$ of $\alpha$ such that $\til{\alpha}(0)=\til{p}$. Since $[\alpha]\neq 0$, $\til{p}':=\til{\alpha}(1)\neq\til{p}$. Choose a geodesic $c\subset\til{P}_p$ with starting point $\til{p}$ and end point $\til{p}'\neq\til{p}$. Then $[\pi\circ c]=[\alpha]$. In particular, $\pi\circ c$ is a non-constant geodesic loop in $P_p(L_0,L_g)$, based at $p$. Let's write $\pi\circ c$ as
$$(\pi\circ c)(s)=(l(s),\gamma_{l(s)}).$$
Then
$$u(s,t):=(l(s),\gamma_{l(s)}(t))$$
is a disk in $X$ satisfying
$$u(s,0)\in L_0,\quad u(s,1)\in L_g,\quad u(0,t)=p,\quad u(1,t)=p.$$
Again, we can consider the function
$$f(s):=g_{X}\left(\pd{u}{s}(s,1),\pd{u}{t}(s,1)\right).$$
Since $l(s)$ is an non-constant geodesic, the same proof above applies to conclude there exists some $s_0$ such that
$$Hess(\phi)(l(s_0))(\dot{l}(s_0),\dot{l}(s_0))=0.$$
Hence we must have $\alpha=0$, that is, $P_p(L_0,L_g)$ is simply connected. In particular, $P_p(L_0,L_g)\cong\bb{R}^n$ as affine manifold.
\end{proof}

In order to rule out the solution on $P$, we need the following

\begin{lemma}\label{lem:max}
On a component of $P(L_0,L_g)$, any rapidly decay positive (resp. negative) functions have a maximum (resp. minimum).
\end{lemma}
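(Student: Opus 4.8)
The plan is to treat the two sides of the dichotomy --- the component $P$ with no critical points of $\cu{A}$ and the contractible components $P_p(L_0,L_g)$ --- uniformly by working on a universal cover. By completeness of $B$ and the covering argument in Lemma~\ref{lem:decomposition}, any connected component $P'$ of $P(L_0,L_g)$ has universal cover affinely diffeomorphic to $\bb{R}^n$; fix such an identification $\pi:\bb{R}^n\to P'$. Pulling back a rapidly decay positive function $\alpha$ on $P'$ along $\pi$ gives a positive function $\til\alpha=\pi^*\alpha$ on $\bb{R}^n$; since $\pi$ is a covering with deck group acting by affine transformations, rapid decay of $\alpha$ along the lattice fiber directions translates into rapid decay of $\til\alpha$ as $|\dot\gamma|\to\infty$ (equivalently, as the fiber coordinate, which is one of the $\bb{R}^n$-coordinates, tends to infinity), uniformly over compact subsets of the base direction.

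First I would make the rapid-decay condition on the $d_W$-closed equation concrete. Writing $\alpha = \alpha(x,m)$ with $m\in\bb{Z}^n$ indexing the sheet, the equation $d_{W,1}\alpha=0$ becomes the first-order system $\partial\alpha_I/\partial x^k + 2\pi(\xi_1^k-\xi_2^k+m_k)\alpha_I = 0$; for the function case ($|\alpha|=0$, a single component) this is a pointwise gradient equation $d\log\alpha = -2\pi\, d\cu{A}$, so $\log\alpha = -2\pi\cu{A} + \text{const}$ on the universal cover. Thus a positive $d_W$-closed function on $P'$ is, up to scale, exactly $e^{-2\pi\cu{A}}$; but we do not even need the closedness --- the lemma is stated for \emph{any} rapidly decay positive function, so the argument should only use that $\alpha>0$ and that $\alpha$ decays. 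A positive continuous function on $\bb{R}^n$ that is rapidly decaying in the fiber variable (uniformly on compacta in the base) is in particular bounded above on any slab over a compact base set, and the supremum is approached in the interior by decay at infinity; I would invoke the properness of the fiber projection together with compactness of $B$ to conclude that the global supremum over $P'$ is finite and attained. Symmetrically, a rapidly decay negative function has its infimum attained.

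The step I expect to be the main obstacle is verifying that the supremum is genuinely \emph{attained} rather than merely finite, and that it descends correctly from $\bb{R}^n$ to $P'$. The subtlety is that $P'$ is noncompact in the base direction as well (it covers the closed manifold $B$, so this direction is fine by compactness) but the key worry is the fiber direction: one must ensure the rapid-decay hypothesis is strong enough that $\alpha$ does not ``escape to a sup at infinity''. Here rapid decay with $k=0$ gives $\alpha(x,\gamma)\to 0$ as $|\dot\gamma|\to\infty$ uniformly for $x$ in a compact set, and covering $B$ by finitely many such compacta yields a uniform bound $\alpha\le C$ outside a compact subset $K\subset P'$ with $\alpha$ positive; then $\max_{\bar K}\alpha$ is attained and is $\ge$ the outside values once $K$ is enlarged so that $C$ is below the interior maximum, giving a genuine global maximum. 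A second, more technical point is the interaction with the deck-transformation action: one must check that the maximum point, a priori living on $\bb{R}^n$, projects to a well-defined point of $P'$ where $\alpha$ (not $\til\alpha$) attains its max --- this is immediate since $\alpha\circ\pi = \til\alpha$ and $\pi$ is surjective, so $\sup_{P'}\alpha = \sup_{\bb{R}^n}\til\alpha$ and any maximizer of $\til\alpha$ projects to a maximizer of $\alpha$. Handling the negative case is then formally identical after replacing $\alpha$ by $-\alpha$.
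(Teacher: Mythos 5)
Your third paragraph is, in essence, the paper's proof: take $k=l=0$ in the rapid-decay condition to get $\sup_{x\in B}\,\alpha(x,\gamma)\to 0$ as $|\dot\gamma|\to\infty$ (uniformly, since $B$ is compact), conclude that $\alpha$ is bounded by any prescribed $\epsilon>0$ outside a compact subset of the component, take $\epsilon<\sup\alpha$ (which is positive because $\alpha>0$ somewhere), and then the maximum over the remaining compact set is a genuine global maximum. The paper implements exactly this on the component $P_0$ directly, by covering the compact base $B$ with finitely many precompact $\overline{V}$'s and writing $P_0=\bigcup_{m\in S}\bigcup_V \overline{V}\times\{m\}$, so that rapid decay traps the supremum in $\bigcup_{|m|<N}\bigcup_V\overline{V}\times\{m\}$; it also dispatches the finite-cover case trivially by compactness. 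Up to these bookkeeping details, your core argument and the paper's coincide.

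The detour through the universal cover in your first two paragraphs is both unnecessary and imprecise. A connected component $P'$ covers $B$, and the lift of $P'\to B$ to universal covers is a \emph{diffeomorphism} $\bb{R}^n\to\bb{R}^n$; it is not a projection with a distinguished fiber factor. So after passing to $\widetilde{P'}\cong\bb{R}^n$, the sheet index $m\in\bb{Z}^n$ of $P'$ over $B$ does not reappear as one of the $\bb{R}^n$-coordinates, and the phrase ``the fiber coordinate, which is one of the $\bb{R}^n$-coordinates, tends to infinity'' has no meaning in that model. The rapid-decay hypothesis is stated intrinsically on $P'$ in terms of $|\dot\gamma|$ over compacta of $B$, and that is the right place to use it; working on $P'$ directly, as the paper does and as your third paragraph ultimately does, sidesteps this confusion. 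Likewise, the digression about $d_W$-closed functions is not needed --- as you yourself note, only positivity and rapid decay enter --- so it is best omitted.
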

\begin{proof}
Let $P_0$ be a component of $P(L_0,L_g)$. Then $P_0$ is a covering of $B$. If $P_0$ is a finite covering of $B$, then we are done since $B$ is compact. Suppose $P_0$ is an infinite covering of $B$. Then there in an infinite set $S\subset\bb{Z}^n$ such that $P_0$ is a $S$-covering of $B$. Now, since $B$ is compact, we can choose a finite open covering $\cu{U}=\{U\}$ for $B$. We assume each $U$ is evenly covered by open sets in $P_0$. Let $\cu{V}=\{V\}$ be a pre-compact refinement of $\cu{U}$, that is, $\overline{V}\subset U$ and $\overline{V}$ is compact. Then by identifying $\overline{V}$ with a compact subset of $P_0$, we have
$$P_0=\bigcup_{m\in  S}\bigcup_{V\in\cu{V}}\overline{V}\times\{m\}.$$
Let $f:P_0\to\bb{R}$ be a rapidly decay positive function. Then for any $\epsilon>0$ and $V\in\cu{V}$, there exists $N_V$ such that for $|m|\geq N_V$,
$$\sup_{x\in\overline{V}}f(x,m)<\epsilon.$$
Let $N:=\max_{V\in\cu{V}}N_V$. Then for $|m|\geq N$, the above inequality holds. Restricting $f$ to the compact set
$$\bigcup_{|m|<N}\bigcup_{V\in\cu{V}}\overline{V}\times\{m\},$$
$f$ achieves a maximum in it. Since $f$ is bounded on $P_0$, $\sup_{P_0}f$ exists and is positive. By taking $\epsilon$ smaller then $\sup_{P_0} f$, we conclude that $f$ has a global maximum.
\end{proof}

Let $\cu{A}_p$ be the primitive of $d\cu{A}$ defined on the component $P_p(L_0,L_g)$ with $\cu{A}_p(p)=0$, where $d\cu{A}(p)=0$. Then we obtain

\begin{proposition}\label{prop:Witten_Morse}
Suppose $B$ is complete. Then the assignment
$$A:p\mapsto e^{-2\pi\cu{A}_p}$$
gives an identification between $\bigoplus_{p\in L_0\cap L_g}\bb{C}\cdot p$ and $\ker(d_W)\cap A^0_{r.d.}(P(L_0,L_g))$. In particular, by composing with the SYZ transform $\cu{F}$, we obtain the identification of morphism spaces:
\begin{equation*}
\xymatrix{
{\cu{F}\circ A:\displaystyle{\bigoplus_{p\in L_0\cap L_g}}\bb{C}\cdot p} \ar@{->}[r]^{\quad\quad\sim} & {H^0(\check{X},\check{L}_g)}
}.
\end{equation*}
\end{proposition}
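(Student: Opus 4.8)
The plan is to show that the assignment $A:p\mapsto e^{-2\pi\cu{A}_p}$ is well-defined, lands in $\ker(d_W)\cap A^0_{r.d.}(P(L_0,L_g))$, and that this map is an isomorphism; the isomorphism with $H^0(\check{X},\check{L}_g)$ then follows immediately by composing with the SYZ transform $\cu{F}$ of Proposition \ref{prop:Fourier} (restricted to degree $0$, where $\dbar_1$-closed elements of $A^0$ are exactly holomorphic sections). The key structural input is Lemma \ref{lem:decomposition}, which tells us that away from the "junk" component $P$, the path space $P(L_0,L_g)$ breaks up into one contractible component $P_p(L_0,L_g)$ for each intersection point $p\in L_0\cap L_g$.

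\textbf{Step 1: well-definedness and $d_W$-closedness of $A(p)$.} On the contractible component $P_p(L_0,L_g)$, the closed $1$-form $d\cu{A}$ is exact, so a primitive $\cu{A}_p$ exists and is unique once we normalize $\cu{A}_p(p)=0$; note $p$ is the unique critical point of $\cu{A}$ on this component since the critical points of $\cu{A}$ are exactly the intersection points of $L_0$ and $L_g$ (cf. Section \ref{sec:morphism}) and distinct intersection points lie in distinct components by Lemma \ref{lem:decomposition}. Then $e^{-2\pi\cu{A}_p}$ is a genuine smooth function on $P_p(L_0,L_g)$, and we extend it by $0$ to all of $P(L_0,L_g)$; since the components are open and closed this is smooth. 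A direct computation gives
$$d_W\!\left(e^{-2\pi\cu{A}_p}\right)=\left(d+2\pi\,d\cu{A}\wedge\right)e^{-2\pi\cu{A}_p}=\left(-2\pi\,d\cu{A}_p+2\pi\,d\cu{A}\right)e^{-2\pi\cu{A}_p}=0,$$
using $d\cu{A}_p=d\cu{A}$ on $P_p(L_0,L_g)$. So $A(p)\in\ker(d_W)\cap A^0(P(L_0,L_g))$.

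\textbf{Step 2: rapid decay.} This is where the Hessian-positivity of $\phi$ does the real work. On $P_p(L_0,L_g)$ the vector field $\dot\gamma$ is, in an affine chart, $\hbar^{-1}\sum_j(\tfrac{\partial\phi}{\partial x^j}+m_j)\,dx^j$ dual to $d\cu{A}$, and the critical point condition at $p$ forces $\tfrac{\partial\phi}{\partial x^j}(x_p)+m_j=0$. Since $Hess(\phi)>0$ and the affine coordinates on the contractible component range over all of $\bb{R}^n$ (Lemma \ref{lem:decomposition}), the function $\cu{A}_p$ is strictly convex with its unique minimum $0$ at $p$, and grows at least quadratically along any ray; more precisely one can bound $\cu{A}_p$ below in terms of $|\dot\gamma|^2$, using completeness to control the global geometry and compactness of $B$ to get uniform constants over compact $K\subset B$. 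Hence $e^{-2\pi\cu{A}_p}$ and all its covariant derivatives decay faster than any power of $|\dot\gamma|$, i.e. $A(p)\in A^0_{r.d.}(P(L_0,L_g))$. Uniqueness of the critical point on each component, combined with the fact that the remaining component $P$ carries no intersection point, shows that $A$ is injective; for surjectivity, given any $\alpha\in\ker(d_W)\cap A^0_{r.d.}$, restrict to each component: on a component $P_0$, writing $\alpha=e^{-2\pi\psi}\cdot\beta$ locally shows $d_W\alpha=0$ forces $\beta$ locally constant, so $\alpha$ is a constant multiple of $e^{-2\pi\cu{A}_0}$ where $\cu{A}_0$ is a local primitive of $d\cu{A}$ — but if $P_0$ has no critical point then $d\cu{A}$ is nowhere zero and on such a component $e^{-2\pi\cu{A}_0}$ cannot be globally defined and rapidly decaying (this is exactly the content of Lemma \ref{lem:max}: a rapidly decay positive function must attain a max, hence a critical point of $\log$, hence of $\cu{A}$, contradiction unless $\alpha\equiv 0$ there). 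So $\alpha$ vanishes on $P$ and is a linear combination of the $A(p)$.

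\textbf{Step 3: conclusion.} By Steps 1--2, $A$ is a linear isomorphism $\bigoplus_{p\in L_0\cap L_g}\bb{C}\cdot p\xrightarrow{\;\sim\;}\ker(d_W)\cap A^0_{r.d.}(P(L_0,L_g))$. Composing with $\cu{F}$ and invoking Proposition \ref{prop:Fourier} in degree $0$ — where $\cu{F}$ intertwines $d_{W,1}$ with $\dbar_1$ and hence identifies $d_W$-closed rapidly-decay $0$-forms with $\dbar$-closed $A^0$-sections of $\check{L}_0^*\otimes\check{L}_g=\check{L}_g$, i.e. with $H^0(\check{X},\check{L}_g)$ — gives the claimed isomorphism $\cu{F}\circ A$.

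\textbf{Main obstacle.} The delicate point is Step 2: establishing rapid decay \emph{uniformly} over compact subsets of $B$ for $A(p)$ and all its covariant derivatives, which requires the quadratic lower bound on $\cu{A}_p$ coming from $Hess(\phi)>0$ to be uniform. Completeness of $B$ (Definition \ref{def:complete}) is essential here so that the component is genuinely $\bb{R}^n$ and $\cu{A}_p$ is a globally defined convex function; without it the primitive might only live on a proper subset and the decay estimate would fail. The surjectivity argument ruling out contributions from the component $P$ is the other place where the hypotheses are used in full force, via Lemma \ref{lem:max}.
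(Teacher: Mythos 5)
Your proposal is correct in outline and your Steps 1 and 3, together with your treatment of the ``junk'' component $P$ via Lemma~\ref{lem:max}, match the paper's argument closely. The genuine divergence is in how rapid decay of $e^{-2\pi\cu{A}_p}$ is established (your Step 2). You propose a \emph{direct} estimate: on the contractible component $P_p(L_0,L_g)\cong\bb{R}^n$, the primitive $\cu{A}_p$ is strictly convex with $\mathrm{Hess}(\cu{A}_p)=\mathrm{Hess}(\phi)$, which is uniformly bounded above and below (as $B$ is compact), so $\cu{A}_p$ grows quadratically in the winding direction while $|\dot\gamma|$ grows linearly, and the Gaussian decay of $e^{-2\pi\cu{A}_p}$ beats any power of $|\dot\gamma|$ with all covariant derivatives. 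This is plausible and, if fully spelled out, has the virtue of being self-contained.

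The paper, by contrast, proves rapid decay \emph{indirectly}. It first observes that on each contractible component the space $\ker(d_W)\cap A^0(P_p(L_0,L_g))$ is exactly $1$-dimensional (since $d\cu{A}$ is exact there), so the only question is whether that $1$-dimensional space survives the rapid-decay constraint. It then invokes Proposition~\ref{prop:Fourier} in the reverse direction, $\cu{F}^{-1}:H^0(\check{X},\check{L}_g)\xrightarrow{\sim}\ker(d_W)\cap A^0_{r.d.}(P(L_0,L_g))$, together with the decomposition of $P(L_0,L_g)$ from Lemma~\ref{lem:decomposition} and the vanishing on $P$, to conclude that $\ker(d_W)\cap A^0_{r.d.}(P_p(L_0,L_g))$ is $1$-dimensional for \emph{some} $p$. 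Finally, because all components $P_p(L_0,L_g)$ are universal covers of $B$, one can choose fiber-preserving diffeomorphisms $\psi_{qp}:P_p(L_0,L_g)\to P_q(L_0,L_g)$ covering $\mathrm{id}_B$; these pull back $d\cu{A}_q$ to $d\cu{A}_p$ and preserve the rapid-decay condition, so rapid decay propagates from one component to all of them. The paper's route is slicker and avoids writing down any asymptotic estimate; the cost is that it implicitly relies on $H^0(\check{X},\check{L}_g)\neq 0$ (to get a nonzero element in \emph{some} component) and on the transport trick between components, neither of which appears in your approach. Your direct route is more elementary and would also cover cases where $H^0$ is not known in advance to be nonzero, but you flag the uniform quadratic lower bound as the ``main obstacle'' without actually proving it; to complete your argument you would need to carry out the convexity estimate on the universal cover, using compactness of $B$ to get the uniform two-sided bounds on $\mathrm{Hess}(\phi)$ that make the ``linear vs.\ Gaussian'' comparison work for all covariant derivatives.

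One small additional remark: your computation in Step~1 that $d_W A(p)=0$ is written out explicitly, whereas the paper takes this for granted; including it is a mild improvement in exposition, and it is correct as stated.
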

\begin{proof}
We claim that non-trivial rapidly decay solution to $d_Wf=0$ has a critical point and this implies $d_Wf=0$ has no non-trivial solution on $P$. To prove our claim, suppose $f$ is a non-trivial rapidly decay solution defined on a component $P_0$ of $P$. By Lemma \ref{lem:max}, $f$ achieves its global maximum or minimum at some point $p_0\in P_0$. Hence $df(p_0)=0$ and so
$$f(p_0)d\cu{A}(p_0)=0.$$
By replacing $f$ by $-f$, we simply assume $f(p_0)$ is the maximum of $f$. If $f(p_0)\neq 0$, then $d\cu{A}(p_0)=0$. If $f(p_0)=0$, then $f\leq 0$. We can then find $p_1$ such that $f(p_1)$ is the global minimum of $f$. Hence $df(p_1)=0$ and so
$$f(p_1)d\cu{A}(p_1)=0.$$
Since $f$ is non-trivial, $f(p_1)\neq 0$. Again, we have $d\cu{A}(p_1)=0$. But $P$ contains no critical point of $\cu{A}$. Therefore, $f$ must be trivial.

Next, we prove that $e^{-2\pi\cu{A}_p}$ is rapidly decay. Note that the equation $d_Wf=0$ has a unique solution (up to a constant multiple) because $d\cu{A}$ is exact on $P_p(L_0,L_g)$. Hence for each critical point $p$ of $\cu{A}$, the vector space $\ker(d_W)\cap A^0(P_p(L_0,L_g))$ is 1-dimensional. Now, for two critical point $p,q$, both $P_p(L_0,L_g)$ and $P_q(L_0,L_g)$ are universal covering spaces of $B$. Hence there exists a (non-unique) diffeomorphism $\psi_{qp}:P_p(L_0,L_g)\to P_q(L_0,L_g)$ covering the identity $id_B$. In particular, it preserves the 1-form $d\cu{A}$, that is,
$$\psi^*_{qp}d\cu{A}_q=d\cu{A}_p.$$
Hence $e^{-2\pi\cu{A}_q\circ\psi_{qp}}$ is a constant multiple of $e^{-2\pi\cu{A}_p}$. Now, the inverse SYZ transform and Lemma \ref{lem:decomposition} give the identification:
$$\cu{F}^{-1}:H^0(\check{X},\check{L}_g)\cong\ker(d_W)\cap A^0_{r.d.}(P(L_0,L_g))=\bigoplus_{p\in L_0\cap L_g}\ker(d_W)\cap A^0_{r.d.}(P_p(L_0,L_g)).$$
This implies $\dim(\ker(d_W)\cap A^0_{r.d.}(P_p(L_0,L_g)))=1$ for some $p$. As rapidly decay condition is preserved by any fiber preserving diffeomorphism, we have, for any $q$,
$$e^{-2\pi\cu{A}_q}\propto e^{-2\pi\cu{A}_p\circ\psi^{-1}_{qp}}$$
is also rapidly decay.

Therefore, we conclude that $A$ gives the isomorphism
$$\bigoplus_{p\in L_0\cap L_g}\bb{C}\cdot p\cong\ker(d_W)\cap A^0_{r.d.}(P(L_0,L_g)).$$
\end{proof}

Therefore, the function $e^{-2\pi\cu{A}_p}$ in Proposition \ref{prop:Witten_Morse} is the Witten-deformation of the intersection point $p\in L_0\cap L_g$.

Proposition \ref{prop:Witten_Morse} can be applied to abelian varieties:

\begin{example}
Let $\Omega$ be a positive definite symmetric $n\times n$ matrix with real entries. Let $\check{X}:=\bb{C}^n/\bb{Z}^n\oplus i\Omega\bb{Z}^n$ be the abelian variety with period $\Omega$. Complex coordinates are given by $z^j:=\check{y}^j+ix^j$. The mirror of $\check{X}$ is $X:=\bb{R}^{2n}/\bb{Z}^{2n}$ equipped with the symplectic structure
$$\omega=\sum_{j,k=1}^n\Omega_{jk}dy_j\wedge dx^k.$$
Let $Q$ be any positive definite symmetric integral matrix with the property that
$$Q\Omega=\Omega Q.$$
Then
$$L_Q:=\{(x,Qx)\in X:x\in\bb{R}^n/\bb{Z}^n\}.$$
is a Lagrangian section of $p:X\to \bb{R}^n/\bb{Z}^n$. Let $x_1,\dots,x_N\in B$ such that $Qx_k\in\bb{Z}^n$ for all $k=1,\dots,N$. In this case, the geodesic path space is a disjoint union of $N$ copies of $\bb{R}^n$ and the Witten differential on a component $P_{(x_k,0)}(L_0,L_Q)$ is given by
$$d+2\pi\sum_{j,l,r=1}^nQ_{jl}\left(x^l-x_k^l+m_l\right)\Omega_{jr}dx^r.$$
Hence the function $A_k:=A(x_k,0)$ is given by
\begin{align*}
A_k(x,m) & = \exp\left(-2\pi\int_{x_k}^{x+m}\int_0^{Q(u-x_k)}\omega\right)\\
& = \exp\left(-\pi(x-x_k+m)^tQ\Omega(x-x_k+m)\right)\\
& = \exp\left(-\pi(x-x_k)^tQ\Omega(x-x_k)\right)\exp\left(-2\pi\left(m^tQ\Omega(x-x_k)+\frac{1}{2}m^tQ\Omega m\right)\right)
\end{align*}
Note that $Q\Omega$ is still positive definite since $[Q,\Omega]=0$. The local holomorphic frame $\check{e}_1$ for $\check{L}_Q$ is given by
$$\check{e}_1=e^{-\pi(x-x_k)^tQ\Omega(x-x_k)}\check{1}_1.$$
The SYZ transform of $A_k$ is given by
\begin{align*}
\cu{F}(A_k)(x,\check{y}) & = \sum_{m\in\bb{Z}^n}e^{-2\pi m^tQ\Omega m}e^{-2\pi(Q\Omega m,x-x_k)}e^{2\pi i(Q m,\check{y})}\otimes\check{e}_1\\
& = \sum_{m\in\bb{Z}^n}e^{-2\pi m^tQ\Omega m}e^{2\pi i(m,Q(\check{y}+i\Omega(x-x_k))}\otimes\check{e}_1
\end{align*}
and its pullback via the covering map $\bb{C}^n\to\check{X}$ is the Riemann theta function
$$\theta\begin{bmatrix}0\\-iQ\Omega x_k\end{bmatrix}(iQ\Omega,Qz_{\Omega})=\sum_{m\in\bb{Z}^n}e^{-2\pi m^tQ\Omega m}e^{2\pi i(m,Qz_{\Omega}-iQ\Omega x_k)}$$
on $\bb{C}^n$, where $z_{\Omega}:=\check{y}+i\Omega x$. In fact, the conditions $[Q,\Omega]=0$ and $Q>0$ are equivalent to the Riemann bilinear relations:
$$\begin{pmatrix}
  I_n & i\Omega
\end{pmatrix}
\begin{pmatrix}
  O_n & Q\\
  -Q & O_n
\end{pmatrix}^{-1}
\begin{pmatrix}
  I_n \\
  i\Omega
\end{pmatrix}=0,\quad
-i\begin{pmatrix}
  I_n & i\Omega
\end{pmatrix}
\begin{pmatrix}
  O_n & Q\\
  -Q & O_n
\end{pmatrix}^{-1}
\begin{pmatrix}
  I_n \\
  -i\Omega
\end{pmatrix}>0,
$$
which gives the ample line bundle $\check{L}_Q$ on $\check{X}$ and the space of global holomorphic sections of $\check{L}_Q$ is generated by the theta functions $\cu{F}(A_k)$, $k=1,\dots,N$.
\end{example}

Combining Propositions \ref{prop:BS_fiber_pola} and \ref{prop:Witten_Morse}, we obtain our first main result:
\begin{theorem}\label{thm:sf}
Let $\check{p}:\check{X}\to B$ be a semi-flat Lagrangian torus fibration over a compact complete special integral affine manifold $B$. Let $g$ be an integral Hessian type metric on $B$. With respect to the prequantum line bundle $(\check{L}_g,\nabla_{\check{L}_g})$, the SYZ transform $\cu{F}$ induces a canonical isomorphism between the spaces of real and complex polarized sections.
\end{theorem}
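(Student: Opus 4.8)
The plan is to assemble Theorem \ref{thm:sf} by concatenating the three canonical identifications that have already been established in the preceding results, and then to check that the composite is genuinely induced by the SYZ transform $\cu{F}$. First I would invoke Proposition \ref{prop:BS_fiber_pola} (\'Sniatycki) to identify the space of real polarized sections with the formal span of Bohr-Sommerfeld fibers,
\[
\Gamma_{P_{\bb{R}}}(\check{X},\check{L}_g)\cong\bigoplus_{\check{F}_x:\text{BS fiber}}\bb{C}\cdot\check{F}_x.
\]
Next, Proposition \ref{prop:BS_fiber_int_pt} gives a bijection between Bohr-Sommerfeld fibers of $\check{p}$ and intersection points of $L_0$ and $L_g$ (both are indexed by $\{x\in B: d\phi(x)\in\Lambda^*_x\}$), so
\[
\bigoplus_{\check{F}_x:\text{BS fiber}}\bb{C}\cdot\check{F}_x\cong\bigoplus_{p\in L_0\cap L_g}\bb{C}\cdot p.
\]
Then Proposition \ref{prop:Witten_Morse} — whose hypotheses are met because $B$ is assumed complete — provides the assignment $A:p\mapsto e^{-2\pi\cu{A}_p}$ identifying $\bigoplus_{p}\bb{C}\cdot p$ with $\ker(d_W)\cap A^0_{r.d.}(P(L_0,L_g))$, and composing with the Fourier-type isomorphism $\cu{F}$ of Proposition \ref{prop:Fourier} (specialized to $L_1=L_0$, $L_2=L_g$, so that $\check{L}_0^*\otimes\check{L}_g\cong\check{L}_g$ since $\check{L}_0$ is the trivial bundle) identifies this with $H^0(\check{X},\check{L}_g)=\Gamma_{P_{\bb{C}}}(\check{X},\check{L}_g)$.

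Chaining these four isomorphisms yields a $\bb{C}$-linear isomorphism $\Gamma_{P_{\bb{R}}}(\check{X},\check{L}_g)\cong\Gamma_{P_{\bb{C}}}(\check{X},\check{L}_g)$. The substantive point that remains is to argue that this composite deserves to be called "the isomorphism induced by $\cu{F}$," i.e.\ that the diagram of Figure \ref{fig:Witten_deformation} commutes in the appropriate sense: the SYZ transform $\cu{F}$ on $P(L_0,L_g)$ and the "dual" SYZ transform relating Bohr-Sommerfeld fibers to intersection points are the two horizontal arrows, and \'Sniatycki's identification together with the Witten-Morse correspondence $A$ are the vertical ones. I would make this precise by checking that, under $\cu{F}\circ A$, the generator attached to a Bohr-Sommerfeld fiber $\check{F}_{x_0}$ maps to the holomorphic section whose "leading term" (in the $\hbar\to 0$ limit, or here directly from the Gaussian shape of $e^{-2\pi\cu{A}_p}$) is concentrated on $\check{F}_{x_0}$; the abelian-variety Example computed in the excerpt, where $\cu{F}(A_k)$ is literally the Riemann theta function with characteristic determined by $x_k$, is the model case and confirms that the bases correspond as expected.

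The main obstacle I anticipate is not any single hard estimate — the analytic input (rapid decay of $e^{-2\pi\cu{A}_p}$, exactness of $d\cu{A}$ on each contractible component $P_p(L_0,L_g)$, ruling out solutions on the leftover component $P$) has all been handled in Lemmas \ref{lem:decomposition} and \ref{lem:max} and in Proposition \ref{prop:Witten_Morse}. Rather, the delicate bookkeeping is \emph{canonicity}: each of the intermediate identifications involves auxiliary choices (a potential $\phi_U$ on each chart, the normalization $\cu{A}_p(p)=0$, the holomorphic frame $\check{1}$ for $\cu{P}$, the non-unique diffeomorphisms $\psi_{qp}$ covering $\mathrm{id}_B$), and one must verify that the composite is independent of all of them. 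The integrality hypothesis on $g$ is precisely what guarantees $L_g$ is globally well-defined independent of $\phi$, and the normalization $\cu{A}_p(p)=0$ pins down $A$ up to a scalar that is then fixed by requiring compatibility with the integral structure on $\check{L}_g$; I would spell out that these constraints together leave no residual ambiguity, so the isomorphism $\cu{F}\circ A$ composed with Propositions \ref{prop:BS_fiber_pola} and \ref{prop:BS_fiber_int_pt} is canonical. Once canonicity is in hand, the theorem follows immediately.
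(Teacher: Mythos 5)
Your proposal is correct and follows essentially the same route as the paper, which proves Theorem \ref{thm:sf} by exactly this chain: Proposition \ref{prop:BS_fiber_pola} (\'Sniatycki), Proposition \ref{prop:BS_fiber_int_pt} to pass from Bohr--Sommerfeld fibers to $L_0\cap L_g$, and Proposition \ref{prop:Witten_Morse} (which already packages the Witten--Morse identification $A$ together with the Fourier isomorphism $\cu{F}$ of Proposition \ref{prop:Fourier}) to reach $H^0(\check{X},\check{L}_g)=\Gamma_{P_{\bb{C}}}(\check{X},\check{L}_g)$. The paper's own proof is just the one-line ``Combining Propositions \ref{prop:BS_fiber_pola} and \ref{prop:Witten_Morse},'' so your extra discussion of canonicity (integrality of $g$ pinning down $L_g$, the normalization $\cu{A}_p(p)=0$, independence of the diffeomorphisms $\psi_{qp}$) is a reasonable elaboration of what the paper leaves implicit rather than a different argument.
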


We expect that Proposition \ref{prop:Witten_Morse} and Theorem \ref{thm:sf} also hold for non-compact but complete base $B$ if we impose suitable growth conditions on the space of holomorphic sections $H^0(\check{X},\check{L}_g)$, as in the following example.

\begin{example}
Let $B=\bb{R}^n$ and $\check{X}=(\bb{C}^{\times})^n$. We have the natural torus fibration $\check{p}:\check{X}\to B$. The mirror of $\check{X}$ is given by $X=(\bb{C}^{\times})^n$. Consider the Lagrangian section
$$L_1:=\{(x,[x])\in X:x\in B\}$$
of the dual fibration $p:X\to B$. The mirror line bundle is isomorphic to the trivial line bundle $\cu{O}_{\check{X}}$ since $\check{X}$ is affine. The set of intersection points between $L_1$ and the zero section $L_0$ is given by
$$\{(k,0)\in X:k\in\bb{Z}\}.$$
On the component $P_{(k,0)}(L_0,L_1)$, the function $A_k:=A((k,0))$ is given by
$$A_k(x)=\exp\left(-\pi(x-k)^2\right).$$
If we let $1:=\cu{F}(A_0)$, then the SYZ transform $\cu{F}(A_k)$ of $A_k$ is proportional to monomial $z^k$ on $\check{X}=(\bb{C}^{\times})^n$. Hence if we restrict our attention to $H^0_{poly}(\check{X},\cu{O}_{\check{X}})$, the space of all holomorphic functions on $\check{X}$ that have polynomial growth (which are just Laurent polynomials), then we obtain the isomorphisms
$$\Gamma_{P_{\bb{R}}}(\check{X},\check{L}_1)\cong\bigoplus_{p\in L_0\cap L_1}\bb{C}\cdot p\cong H^0_{poly}(\check{X},\cu{O}_{\check{X}}),$$
via Witten-Morse theory and the SYZ transform.
\end{example}

\subsection{Projective toric manifolds}

We now turn to the quantization problem for projective toric manifolds. Let us first recall some basic facts in toric geometry.

Let $N\cong\bb{Z}^n$ be a lattice of rank $n$ and set
$$N_{\bb{R}}:=N\otimes_{\bb{Z}}\bb{R},\quad M:=Hom_{\bb{Z}}(N,\bb{Z}),\quad M_{\bb{R}}:=M\otimes_{\bb{Z}}\bb{R}.$$
Let $\Sigma$ be a fan with primitive generators $v_1,\dots,v_d\in N$. Let $\check{X}:=X_{\Sigma}$ be the toric variety associated to $\Sigma$. We assume $\check{X}$ is smooth and projective. The Picard group of $\check{X}$ has an explicit description as follows. Let $\iota:M\to \bb{Z}^d$ be given by
$$\iota:u\mapsto\left(\inner{u,v_1},\dots,\inner{u,v_d}\right).$$
The assignment
$$[a]\mapsto\cu{L}_{[a]}:=\cu{O}\left(\sum_{j=1}^da_jD_j\right),$$
where $D_j\subset\check{X}$ is the toric divisor corresponds to the ray $v_j$, gives the identification
$$Pic(\check{X})\cong\bb{Z}^d/\iota(M).$$

Since $\check{X}$ is assumed to be projective, we can take $\cu{L}_{[\lambda]}$ to be an ample line bundle on $\check{X}$. It is a well-known fact in toric geometry that such a line bundle is in fact very ample (see \cite{Fulton_book}). Hence it determines an embedding $i:\check{X}\hookrightarrow\bb{P}^N$ into some projective space $\bb{P}^N$. Let $\check{\omega}:= i^*\omega_{FS}$, where $\omega_{FS}$ is the Fubini-Study metric on $\bb{P}^N$. The dense torus in $\check{X}$ can be identified with $TN_{\bb{R}}/N$. Denote the coordinates on $N_{\bb{R}}$ by $\xi^j$ and the induced complex coordinates $z^j:=\check{y}^j+i\xi^j$ on $TN_{\bb{R}}/N$. It is well-known (see \cite{Guillemin_book}) that on $TN_{\bb{R}}/N$, the K\"ahler form $\check{\omega}$ can be written as
$$\check{\omega}:=2i\partial\dbar\phi,\quad\phi(\xi):=\frac{1}{4\pi}\log\left(\sum_{u\in B\cap M}c_ue^{4\pi\inner{u,\xi}}\right).$$
Here $c_u\geq 0$ are constants that depend on the embedding $i:\check{X}\hookrightarrow\bb{P}^N$.

The $(\bb{C}^{\times})^n$-action on $\check{X}$ restricted to a Hamiltonian $T^n$-action on $(\check{X},\check{\omega})$ such that the moment map $\check{p}:\check{X}\to M_{\bb{R}}$ has image
$$B:=\{x\in M_{\bb{R}}:\inner{x,v_k}+\lambda_j\geq 0,k=1,\dots,d\},$$
which is a convex polytope in the vector space $M_{\bb{R}}$. The interior fibers of $\check{p}:\check{X}\to B$ are special Lagrangian tori with respect to the following holomorphic volume form
$$\check{\Omega}:=\frac{dz^1}{z^1}\wedge\cdots\wedge\frac{dz^n}{z^n},$$
which has a simple pole along the toric divisors. The fibers of $\check{p}$ get degenerated to isotropic tori on the boundary $\partial B$. Moreover, the space of holomorphic sections of $\cu{L}_{[\lambda]}$ can be identified with the vector space spanned by the lattice points in $B$, that is $M\cap B$.

\begin{remark}
One can identify $N_{\bb{R}}$ with $\mathring{B}$, the interior of $B$, via the differential $d\phi:N_{\bb{R}}\to\mathring{B}$. Moreover, if we let $\pi:TN_{\bb{R}}/N\to N_{\bb{R}}$ be the natural projection, the moment map $\check{p}$ factors through $N_{\bb{R}}$ as $\check{p}=d\phi\circ\pi$.
\end{remark}

Before going into geometric quantization, let us recall mirror symmetry for a projective toric manifold.
The mirror of $\check{X}$ is given by the Landau-Ginzburg model $(X,W)$, where $X:=T^*\mathring{B}/\Lambda^*$ and $W$ is a holomorphic function on $X$, called the superpotential. Explicitly, we have
$$X=\left\{(z_1,\dots,z_n)\in (\bb{C}^{\times})^n:|e^{-\lambda_j}z^{v_j}|<1,\forall j=1,\dots,d\right\}$$
and
$$W(z_1,\dots,z_n):=\sum_{j=1}^de^{-\lambda_{j}}z^{v_{j}}.$$
Here, the complex coordinate $z_j$ is given by
$$z_j:=e^{2\pi i(y_j+i\phi_j)},$$
where $\phi_j=\pd{\phi}{\xi^j}$. Hence we can identify $X$ with $T^*N_{\bb{R}}/M$ via
$$(y_1+i\phi_1,y_2+i\phi_2)\mapsto (y_1+i\xi_1,y_2+i\xi_2).$$
Hence $X$ admits a special Lagrangian torus fibration over $\mathring{B}$ with respect to the symplectic form
$$\sum_{j=1}^ndy_j\wedge d\xi^j$$
and the holomorphic volume form
$$\Omega=\frac{dz_1}{z_1}\wedge\cdots\wedge\frac{dz_n}{z_n}.$$

In \cite{Chan09}, the first named author of this paper defined a certain class of Lagrangian sections of the dual fibration $p:X\to\mathring{B}$ whose SYZ transform are in $Pic(\check{X})$. Moreover, every such line bundle carries a natural $T^n$-invariant Hermitian metric. Let's recall this class of objects. We identify $\mathring{B}$ with $N_{\bb{R}}$ by $d\phi$.

\begin{definition}
Let $a:=(a_1,\dots,a_d)\in\bb{Z}^d$. A Lagrangian section $\til{L}$ of $p:T^*N_{\bb{R}}\to N_{\bb{R}}$ is said to satisfy \emph{Condition $(*_a)$} if there is a $C^2$-potential function $g:N_{\bb{R}}\to\bb{R}$ of $\til{L}$ satisfies the following conditions: For any top dimensional cone $\sigma\in\Sigma(n)$, without loss of generality, assume $\sigma$ is generated by $v_1,\dots,v_n$ and let $\xi(t)=t_1v_1+\cdots+t_nv_n$, for $t_j\in\bb{R}$, we have
\begin{itemize}
\item [1.] For any $j=1,\dots,n$, the limits
\begin{align*}
&\lim_{t_j\to-\infty}2e^{-4\pi t_j}(\inner{dg(\xi(t)),v_j}+a_j),\\
&\lim_{t_j\to-\infty}e^{-4\pi t_j}\text{Hess}(g(\xi(t)))(v_j,v_j)
\end{align*}
exist and equal.
\item [2.] For any $j,k,l=1,\dots,n$, the limit
$$\lim_{t_l\to-\infty}\text{Hess}(g(\xi(t)))(v_j,v_k)$$
exists.
\item [3.] For any $j,k=1,\dots,n$ with $j\neq k$, we have
$$\lim_{t_j\to-\infty}e^{-2\pi(t_j+t_k)}\text{Hess}(g(\xi(t)))(v_j,v_k)=0$$
or
$$\lim_{t_k\to-\infty}e^{-2\pi(t_j+t_k)}\text{Hess}(g(\xi(t)))(v_j,v_k)=0.$$
\end{itemize}
Let $[a]\in\bb{Z}^d/\iota(M)$. A Lagrangian section of $p:X\to N_{\bb{R}}$ is said to satisfy \emph{Condition $(*_{[a]})$} if for some lift $\til{L}\subset T^*N_{\bb{R}}$ of $L$, $\til{L}$ satisfies Condition $(*_a)$ for some representative $a$ of $[a]$.
\end{definition}

\begin{remark}
There is a slight difference between our definition and that given in \cite{Chan09}. We require all the exponential terms in the limits consist of a fact of $2\pi$. This difference is due to our choice of complex coordinates on $X$ being $e^{2\pi i(y_j+i\phi_j)}$, while in \cite{Chan09}, the author used $e^{\phi_j+iy_j}$ instead.
\end{remark}

The main result in \cite{Chan09} is the following

\begin{theorem}[Chan \cite{Chan09}]\label{thm:Chan}
Fix $a\in\bb{Z}^d$. The SYZ transform gives a 1-1 correspondence between Lagrangian sections satisfying Condition $(*_{[a]})$ and $T^n$-invariant Hermitian metrics on $\cu{L}_{[a]}$ of $C^2$-class.
\end{theorem}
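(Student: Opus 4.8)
The plan is to reduce the statement to an asymptotic analysis at the toric boundary. First I would note that, exactly as in the semi-flat setting of Section~\ref{sec:SYZ}, a Lagrangian section $L$ of $p:T^{*}N_{\bb{R}}\to N_{\bb{R}}$ is the graph of $dg$ for a $C^{2}$ potential $g:N_{\bb{R}}\to\bb{R}$, determined by $L$ up to an affine function, and its SYZ transform $\check{L}$ is a Hermitian holomorphic line bundle over the dense torus $\check{X}^{\circ}:=TN_{\bb{R}}/N\subset\check{X}$, with connection $d+2\pi i\sum_{j}\partial_{j}g\,d\check{y}^{j}$ and curvature $\tfrac{i}{2\pi}F=-\sum_{j,k}\partial_{j}\partial_{k}g\,d\xi^{j}\wedge d\check{y}^{k}$; this data depends on $g$ only through $dg$, hence only on $L$. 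Conversely, the inverse SYZ transform reconstructs $g$ (up to an affine function) from any $T^{n}$-invariant Hermitian holomorphic line bundle on $\check{X}^{\circ}$. So the bijection over $\check{X}^{\circ}$ is formal, and the whole content of the theorem is the claim that $L$ satisfies Condition~$(*_{[a]})$ if and only if $(\check{L},h_{\check{L}})$ extends across the toric divisors to $\cu{L}_{[a]}$ together with a $T^{n}$-invariant Hermitian metric of $C^{2}$-class.

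Next I would localize. Fix a maximal cone $\sigma\in\Sigma(n)$ generated, without loss of generality, by $v_{1},\dots,v_{n}$, with affine chart $U_{\sigma}\cong\bb{C}^{n}$ and coordinates $w_{1},\dots,w_{n}$ such that $D_{j}\cap U_{\sigma}=\{w_{j}=0\}$, and write $\xi=t_{1}v_{1}+\cdots+t_{n}v_{n}$, so that the divisor $\{w_{j}=0\}$ is approached inside $U_{\sigma}$ as $t_{j}\to-\infty$. Writing down the canonical meromorphic section of $\cu{L}_{[a]}=\cu{O}(\sum_{j}a_{j}D_{j})$ in this chart, a $T^{n}$-invariant Hermitian metric on it takes the form $e^{-2\pi\varphi}$ for a function $\varphi$ on $N_{\bb{R}}$, and I would compute the corresponding Chern connection and curvature. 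Matching these against $d+2\pi i\sum_{j}\partial_{j}g\,d\check{y}^{j}$, one finds that the SYZ connection extends across $\{w_{j}=0\}$ as the Chern connection of a $C^{2}$ metric on $\cu{O}(a_{j}D_{j})$ exactly when: the coefficient of $dw_{j}/w_{j}$ has singular part $a_{j}$ --- which is the existence of $\lim_{t_{j}\to-\infty}2e^{-4\pi t_{j}}(\langle dg,v_{j}\rangle+a_{j})$ --- and the weight $\varphi$ is $C^{1}$ across the divisor, which forces this limit to equal $\lim_{t_{j}\to-\infty}e^{-4\pi t_{j}}\mathrm{Hess}(g)(v_{j},v_{j})$: this is Condition~1; the second derivatives $\mathrm{Hess}(g)(v_{j},v_{k})$ converge as each $t_{l}\to-\infty$, so that the Hessian of $\varphi$ and hence the K\"ahler metric extend continuously: this is Condition~2; and along the deeper stratum $D_{j}\cap D_{k}$ (for $j\neq k$) the mixed curvature term $e^{-2\pi(t_{j}+t_{k})}\mathrm{Hess}(g)(v_{j},v_{k})$ dies when that stratum is approached from at least one of the two coordinate directions: this is the ``or'' alternative of Condition~3. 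The uniform weight $e^{-2\pi(t_{j}+t_{k})}$ attached to the $(j,k)$ Hessian entry, specializing to $e^{-4\pi t_{j}}$ on the diagonal, is precisely what converts quantities blowing up like $\log$ of the toric coordinates into finite boundary values.

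Running this computation in the reverse direction gives the converse: restricting a $C^{2}$, $T^{n}$-invariant Hermitian metric on $\cu{L}_{[a]}$ to $\check{X}^{\circ}$ and applying the inverse SYZ transform yields a potential $g$ whose Taylor expansion of $\varphi$ at $w=0$ in each chart forces exactly the limits in Conditions~1--3. I would then verify that the two assignments are mutually inverse --- which is immediate, since the SYZ and inverse SYZ transforms are already inverse over $\check{X}^{\circ}$ and, once identified with $(*_{[a]})$, the property of extending to a $C^{2}$ metric on $\cu{L}_{[a]}$ is preserved in both directions --- and that everything is independent of the chosen lift $\til{L}\subset T^{*}N_{\bb{R}}$ of $L$ and of the representative $a$ of $[a]$, since changing either modifies $g$ by an affine function, i.e.\ tensors $\check{L}$ by a flat line bundle carrying a different $T^{n}$-linearization, which leaves the underlying $\cu{L}_{[a]}$ unchanged.

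The main obstacle is the middle step: the precise bookkeeping that translates $C^{2}$-regularity of the Hermitian metric across the toric divisors \emph{and their mutual intersections} into the three limit conditions on $g$. Condition~3 is the subtlest point, since it governs behavior along strata of codimension $\geq 2$ and its asymmetric ``or'' reflects the fact that a mixed term $e^{-2\pi(t_{j}+t_{k})}\mathrm{Hess}(g)(v_{j},v_{k})$ need only be killed along one of the two coordinate hyperplanes cutting out $D_{j}\cap D_{k}$ for the metric to remain $C^{2}$ there.
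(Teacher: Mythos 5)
Theorem~\ref{thm:Chan} is imported from \cite{Chan09} as a black box --- the present paper gives no proof of it --- so there is no internal argument to compare yours against. That said, your sketch is a correct reading of what any proof of this statement must do. You rightly observe that the SYZ correspondence between Lagrangian sections of $p:T^{*}N_{\bb{R}}\to N_{\bb{R}}$ and $T^{n}$-invariant Hermitian data over the dense orbit $\check{X}^{\circ}\cong TN_{\bb{R}}/N$ is essentially tautological (potential $g \leftrightarrow$ weight of the metric, $dg \leftrightarrow$ connection 1-form, $\mathrm{Hess}(g) \leftrightarrow$ curvature), so that all the content is in the equivalence of Condition~$(*_{[a]})$ with $C^{2}$-extension across the toric boundary, checked chart by chart in the affine charts $U_{\sigma}$. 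Your assignment of roles to the three parts of Condition~$(*_{a})$ --- the matched pair of limits in Condition~1 controlling the singular part of the connection near $D_{j}$ and the leading coefficient of the metric weight there, Condition~2 controlling continuous extension of the full Hessian, and the asymmetric ``or'' of Condition~3 governing the mixed second derivatives along the codimension-$\geq 2$ strata $D_{j}\cap D_{k}$ --- is consistent with the structure of Chan's argument, and you correctly single out Condition~3 as the subtle one.

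However, you explicitly flag and then skip the step that actually carries the theorem: the local computation in each $U_{\sigma}$ that translates $C^{2}$-regularity of $\varphi$ and its derivatives across $\{w_{j}=0\}$ and $\{w_{j}=w_{k}=0\}$ into the three limit conditions and, conversely, reconstructs the $C^{2}$ extension from them. Until that bookkeeping is done --- including the sign conventions you leave implicit, i.e.\ whether $e^{-4\pi t_{j}}$ is $|Z^{j}_{v}|^{2}$ or $|Z^{j}_{v}|^{-2}$ in the paper's chart coordinates, which is exactly what makes the factor of $2$ and the rate $e^{-4\pi t_{j}}$ in Condition~1 come out right --- what you have written is an accurate description of the shape of the proof rather than a proof. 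For a theorem whose entire content is that asymptotic matching, the sketch stops precisely where the argument begins.
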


In particular, we know that whenever a Lagrangian section satisfies Condition $(*_{[a]})$, then its SYZ transform is isomorphic to the holomorphic line bundle $\cu{L}_{[a]}$.

Now we are ready to work on the quantization problem for the projective toric manifold $\check{X}$. As the fibers of the moment map $\check{p}$ degenerates to isotropic tori on the boundary $\partial B$, it is natural consider Bohr-Sommerfeld isotropic submanifolds. It is already known that Bohr-Sommerfeld isotropic fibers are precisely those fibers above the lattice points of $B$ (see \cite{Toric_geometric_quantization}), and these lattice points also correspond to holomorphic sections of the associated line bundle of the polytope $B$, the equivalence of real and complex polarization is almost trivial. Here we give a mirror symmetric interpretation of this equivalence.

We consider the Lagrangian section
$$L_{\phi}:=\{(\xi,d\phi(\xi))\in X:\xi\in N_{\bb{R}}\}.$$
Applying the SYZ transform, we obtain a holomorphic line bundle $\check{L}_{\phi}$ on the dense torus $TN_{\bb{R}}/N\subset\check{X}$ together with a connection $\nabla_{\check{L}_{\phi}}$. The following proposition shows that the pair $(\check{L}_{\phi},\nabla_{\check{L}_{\phi}})$ in fact extends to a holomorphic prequantum line bundle on $\check{X}$.

\begin{proposition}\label{lem:same_bundle}
The line bundle $\check{L}_{\phi}$ extends to a holomorphic line bundle on $\check{X}$ and is isomorphic to the line bundle $\cu{L}_{[\lambda]}$. Moreover, $(\check{L}_{\phi},\nabla_{\check{L}_{\phi}})$ defines a prequantum line bundle on $\check{X}$.
\end{proposition}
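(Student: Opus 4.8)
The plan is to deduce the statement from Chan's correspondence (Theorem \ref{thm:Chan}) together with a direct curvature computation on the open dense torus $TN_{\bb{R}}/N\subset\check{X}$. First I would check that $L_\phi$ -- the graph of the differential of the K\"ahler potential $\phi$ -- satisfies Condition $(*_{[\lambda]})$ with representative $a=\lambda$; then Theorem \ref{thm:Chan} immediately gives that $\check{L}_\phi$ extends to a holomorphic line bundle on $\check{X}$ isomorphic to $\cu{L}_{[\lambda]}$, carrying a $T^n$-invariant Hermitian metric of $C^2$-class whose Chern connection restricts to $\nabla_{\check{L}_\phi}$ on the dense torus; in particular $\nabla_{\check{L}_\phi}$ itself extends to $\check{X}$. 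Finally I would verify the prequantum identity $\frac{i}{2\pi}F_{\nabla_{\check{L}_\phi}}=\check{\omega}$.

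To verify Condition $(*_{[\lambda]})$, fix a maximal cone $\sigma\in\Sigma(n)$ generated by $v_1,\dots,v_n$ with corresponding vertex $x_\sigma\in B\cap M$, so that $\inner{x_\sigma,v_j}+\lambda_j=0$ for $j=1,\dots,n$, $B\subset x_\sigma+\sigma^\vee$, and $c_{x_\sigma}>0$. Writing $\xi(t)=\sum_k t_k v_k$ and $m_k^{(u)}:=\inner{u-x_\sigma,v_k}\in\bb{Z}_{\geq 0}$ (not all zero for $u\neq x_\sigma$), one has
$$\phi(\xi(t))=\inner{x_\sigma,\xi(t)}+\frac{1}{4\pi}\log\Bigl(c_{x_\sigma}+\sum_{u\neq x_\sigma}c_u e^{4\pi\sum_k m_k^{(u)}t_k}\Bigr),$$
so the non-affine part decays exponentially as any single $t_j\to-\infty$. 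I would then differentiate: $\inner{d\phi(\xi(t)),v_j}+\lambda_j$ is the average of the integers $m_j^{(u)}$ against the weights $c_u e^{4\pi\sum_k m_k^{(u)}t_k}$, while the Hessian entries $\text{Hess}(\phi(\xi(t)))(v_j,v_k)$ are, up to the factor $4\pi$, the corresponding covariances. Because the $m_j^{(u)}=0$ terms dominate the normalization and the leading correction comes from the $m_j^{(u)}=1$ terms, both $\inner{d\phi(\xi(t)),v_j}+\lambda_j$ and $\text{Hess}(\phi(\xi(t)))(v_j,v_j)$ should be asymptotic to $e^{4\pi t_j}$ times proportional positive constants (Condition 1), $\text{Hess}(\phi(\xi(t)))(v_j,v_k)$ should converge as $t_l\to-\infty$ for all $j,k,l$ (Condition 2), and for $j\neq k$ it is $O(e^{4\pi t_j})$, hence annihilated by the weight $e^{-2\pi(t_j+t_k)}$ (Condition 3).

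For the prequantum identity I would argue exactly as in the semi-flat case: over the dense torus $\nabla_{\check{L}_\phi}=d+2\pi i\sum_j\pd{\phi}{\xi^j}\,d\check{y}^j$, so $\frac{i}{2\pi}F_{\nabla_{\check{L}_\phi}}=-\sum_{j,k}\secpd{\phi}{\xi^j}{\xi^k}\,d\xi^k\wedge d\check{y}^j$; and writing $z^j=\check{y}^j+i\xi^j$ one computes $\check{\omega}=2i\partial\dbar\phi=-\sum_{j,k}\secpd{\phi}{\xi^j}{\xi^k}\,d\xi^k\wedge d\check{y}^j$ as well, so the two agree on $TN_{\bb{R}}/N$. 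Since both $\check{\omega}$ and the curvature $F_{\nabla_{\check{L}_\phi}}$ (of the Chern connection of the $C^2$-class metric furnished above, hence a continuous form) are continuous $2$-forms on $\check{X}$ agreeing on the dense open set $TN_{\bb{R}}/N$, they agree on all of $\check{X}$; since $\check{L}_\phi$ is holomorphic, $(\check{L}_\phi,\nabla_{\check{L}_\phi})$ is then a prequantum line bundle on $\check{X}$.

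The hard part will be the verification of Condition $(*_{[\lambda]})$: this is a somewhat delicate asymptotic analysis of the logarithmic potential $\phi$ near every facet of $B$, the subtlest point being the equality -- not merely the existence -- of the two limits in Condition 1, which forces one to keep track of the precise leading coefficients in the expansions above.
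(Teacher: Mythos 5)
Your proposal follows essentially the same route as the paper: verify that $L_{\phi}$ satisfies Condition $(*_{[\lambda]})$, invoke Theorem \ref{thm:Chan} to obtain the extension and the isomorphism $\check{L}_{\phi}\cong\cu{L}_{[\lambda]}$, and then check the curvature identity $\frac{i}{2\pi}F_{\nabla_{\check{L}_{\phi}}}=\check{\omega}$ on the dense torus. The only difference is one of detail: the paper computes only the limit $\lim_{t_j\to-\infty}\inner{d\phi(\xi(t)),v_j}=-\lambda_j$ and then asserts the full Condition $(*_{[\lambda]})$ as ``not hard to see,'' whereas you sketch the covariance/asymptotics argument for the Hessian entries and also add the (correct, if implicit in the paper) continuity argument for extending the curvature identity from the open torus to all of $\check{X}$.
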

\begin{proof}
Fix a top dimensional cone $\sigma\in\Sigma$. By renaming, we assume $\sigma$ is generated by $v_1,\dots,v_n$. Let $\xi(t)=\sum_{j=1}^nt_jv_j$. We show that
$$\lim_{t_j\to-\infty}\inner{d\phi(\xi(t)),v_j}=-\lambda_j.$$
We compute that
$$\inner{d\phi(\xi(t)),v_j}=\frac{\sum_{u\in B\cap M}\inner{u,v_j}c_ue^{4\pi\inner{u,\xi}}}{\sum_{u\in B\cap M}c_ue^{4\pi\inner{u,\xi}}}.$$
Then
$$\lim_{t_j\to-\infty}\inner{d\phi(\xi(t)),v_j}=\min_{u\in B\cap M}\inner{u,v_j}=-\lambda_j.$$
It is not hard to see that the Lagrangian $L_{\phi}$ satisfies Condition $(*_{[\lambda]})$. Hence by Theorem \ref{thm:Chan}, $(\check{L}_{\phi},\nabla_{\check{L}_{\phi}})$ extends to $\check{X}$ and $\check{L}_{\phi}\cong\cu{L}_{[\lambda]}$ as holomorphic line bundle.

For the prequantum condition, recall that the connection $\nabla_{\check{L}_{\phi}}$ is given by
$$\nabla_{\check{L}_{\phi}}=d+2\pi i\sum_{j=1}^n\pd{\phi}{\xi^j}d\check{y}^j.$$
Hence
$$\frac{i}{2\pi}F_{\nabla_{\check{L}_{\phi}}}=-\sum_{j=1}^n\frac{\partial^2\phi}{\partial\xi^k\partial\xi^j}d\xi^k\wedge d\check{y}^j=\check{\omega}.$$
This completes the proof of the proposition.
\end{proof}

As we have mentioned, the spaces of real and complex polarized sections are respectively given by
\begin{align*}
\Gamma_{P_{\bb{R}}}(\check{X},\check{L}_{\phi}) & \cong \bigoplus_{\check{F}_x:\text{BS fiber}}\bb{C}\cdot\check{F}_x,\\
\Gamma_{P_{\bb{C}}}(\check{X},\check{L}_{\phi}) & = H^0(\check{X},\check{L}_{\phi}).
\end{align*}
Note that the direct sum is taken over all Bohr-Sommerfeld isotropic fibers.

As the semi-flat case, we would like to establish a correspondence between $L_0\cap L_{\phi}$ and Bohr-Sommerfeld fibers. However, this is not true if we only consider interior intersections. What we need is to include the intersections at infinity:

\begin{definition}\label{def:int_equiv}
A Lagrangian section $L$ of $p:X\to B$ is said to be \emph{intersecting the zero section $L_0$ at infinity} if there exists a lift $\til{L}\subset T^*N_{\bb{R}}$ of $L$, a potential function $g:N_{\bb{R}}\to\bb{R}$ of $\til{L}$, a ray $\xi:[0,\infty)\to N_{\bb{R}}$ such that the limit
$$\lim_{t\to\infty}dg(\xi(t))$$
exists in $M$. Fix a potential $g$. Let
$$\cu{R}_{\bb{Z}}(L) := \left\{\xi:[0,\infty)\to N_{\bb{R}}\Big|\lim_{t\to\infty}dg(\xi(t))\text{ exists in }M\right\}.$$
Two rays $\xi_1,\xi_2\in \cu{R}_{\bb{Z}}(L)$ are said to be \emph{integrally equivalent} if
$$\lim_{t\to\infty}dg(\xi_1(t))=\lim_{t\to\infty}dg(\xi_2(t)).$$
Denote the set of all equivalence classes of such rays by $\overline{L_0\cap L}$.
\end{definition}

\begin{remark}
Interior intersection points are precisely those equivalence classes that can be represented (uniquely) by constant path.
\end{remark}

In view of Section \ref{ch:SYZ_geometric_quantization}, one would like to consider the fiberwise geodesic path space $P(L_0,L_{\phi})$. However, there are not enough critical points if we only consider interior intersections. To recall the information coming from infinity, we consider
$$\cu{R}(L_{\phi}):=\left\{\xi:[0,\infty)\to N_{\bb{R}}\Big|\lim_{t\to\infty}d\phi(\xi(t))\text{ exists in }M_{\bb{R}}\right\}.$$
Similar to Definition \ref{def:int_equiv}, we say two rays $\xi_1,\xi_2$ are equivalent if
$$\lim_{t\to\infty}dg(\xi_1(t))=\lim_{t\to\infty}dg(\xi_2(t)).$$
Denote $\overline{N_{\bb{R}}}:=\cu{R}(L_{\phi})/\sim$ and $\overline{P(L_0,L_{\phi})}:=\overline{N_{\bb{R}}}\times M$.

\begin{remark}
There is a natural identification between $\overline{N_{\bb{R}}}$ and the polytope $B$, given by
$$[\xi]\mapsto\lim_{t\to\infty}d\phi(\xi(t)).$$
Moreover, $\overline{L_0\cap L_{\phi}}$ is mapped to $M\cap B$, the set of lattice points of $B$.
See also \cite{Fulton_book}.
\end{remark}

The original path space $P(L_0,L_{\phi})$ can be identified with $N_{\bb{R}}\times M$ via
$$\left(\xi,s(d\phi(\xi)+m)\right)\mapsto (\xi,m).$$
Hence $P(L_0,L_{\phi})$ sits inside $\overline{P(L_0,L_{\phi})}$ naturally by
$$(\xi,m)\mapsto ([\xi],m),$$
where $[\xi]$ is the equivalence class of the constant path $\xi$. Note that the interior intersection points are of form
$$\left([d\phi^{-1}(u)],-u\right),$$
with $u\in\mathring{B}\cap M$. For the intersections at infinity, they are of the form
$$\left([\xi],-u\right),$$
where $\xi\in\cu{R}_{\bb{Z}}(L_{\phi})$ and $u\in\partial B\cap M$ is nothing but the limit
$$\lim_{t\to \infty}d\phi(\xi(t)).$$

Next, we study the Morse theory on the fiberwise geodesic path space $P(L_0,L_{\phi})$. However, not all functions on $P(L_0,L_{\phi})$ can be transformed to global sections of $\check{L}_{\phi}$.

Let us recall the the coordinate charts associated to a vertex of $B$. Let $V(B)$ be the vertex set of $B$. Consider the coordinate chart $\check{p}^{-1}(B_v)$, where
$$B_v:=\{v\}\cup\mathring{B}\cup\bigcup_{F:\text{ face contains }v}\mathring{F}$$
and $v\in V(B)$. Let
$$l_k(x):=\inner{x,v_k}+\lambda_k,\quad k=1,\dots,d.$$
By smoothness, without loss of generality, we assume the vertex $v$ is given by
$$l_1(x)=\cdots=l_n(x)=0.$$
Let $A=(A_{jk})\in GL(n,\bb{Z})$ be the differential of the affine map
$$(x_1,\dots,x_n)\mapsto (l_1(x),\dots,l_n(x)).$$
The gluing map $F_v:TN_{\bb{R}}/N\to\check{p}^{-1}(B_v)$ of $\check{p}^{-1}(B_v)\cong\bb{C}^n$ to $TN_{\bb{R}}/N\cong(\bb{C}^{\times})^n$ is given by
$$F_v:\left(e^{2\pi i(\check{y}^j+i\xi^j)}\right)_{j=1,\dots,n}\mapsto\left(e^{2\pi i\sum_{k=1}^n{^tA}^{jk}(\check{y}^k+i\xi^k)}\right)_{j=1,\dots,n},$$
where $({^tA}^{jk})$ is the transpose of the inverse of $A$. We write
$$\xi_v^j:=\sum_{k=1}^n{^tA}^{jk}\xi^k,\quad \check{y}^j_v:=\sum_{k=1}^n{^tA}^{jk}\check{y}^k.$$
The complex coordinates
$$Z_v^j:=X_v^j+i\check{Y}_v^j:=e^{2\pi iz^j_v}:=e^{2\pi i(\check{y}^j_v+i\xi^j_v)}$$
extend to $\check{p}^{-1}(B_v)\cong\bb{C}^n$.

Recall the SYZ transform of a function $f:P(L_0,L)\to\bb{C}$ is given by
$$\cu{F}(f)=\sum_{m\in\bb{Z}^n}\alpha_I(\xi,m)e^{2\pi i(m,\check{y})}\otimes\check{1}_L.$$
On $\check{p}^{-1}(B_v)$, there is an unitary frame $\check{1}_v$ such that
$$\check{1}_v=e^{2\pi i(v,\check{y})}e^{2\pi i(\lambda,\check{y})}\check{1}_L,$$
where $v\in V(B)$ and $\lambda\in\bb{Z}^n$ corresponds to the difference between the choices of lift of $L$ to $TN_{\bb{R}}$ on the two charts $\check{p}^{-1}(\mathring{B})$, $\check{p}^{-1}(\mathring{B}_v)$. In terms of the coordinates $\xi_v^j,\check{y}_v^j$, we have
$$\cu{F}(f)=\sum_{m\in\bb{Z}^n}f({}^tA\xi_v,A^{-1}m+\lambda)e^{2\pi i(m,\check{y}_v)}\otimes\check{1}_v.$$

\begin{lemma}\label{lem:ext_fun}
Let $F:(\bb{C}^{\times})^n\to\bb{C}$ be a smooth function. Equip $\bb{C}^n$ with the standard flat metric $\bar{g}$. Then $F$ extends to a smooth function on $\bb{C}^n$ if and only if for any pre-compact open subset $U\subset\bb{C}^n$ and $j\in\bb{Z}_{\geq 0}$, the covariant derivatives $\nabla^jF$ are bounded on $U\cap(\bb{C}^{\times})^n$.
\end{lemma}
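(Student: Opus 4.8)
The plan is to prove Lemma \ref{lem:ext_fun} by reducing the extension question to a statement about power series convergence, exploiting the fact that a smooth function on $\bb{C}^n$ is characterized by estimates on its derivatives together with Whitney-type matching conditions, and that the punctured polydisc $(\bb{C}^{\times})^n$ is dense in $\bb{C}^n$. First I would observe that one direction is immediate: if $F$ extends smoothly across the divisors $\{Z^j_v=0\}$, then on any pre-compact $U\subset\bb{C}^n$ all covariant derivatives $\nabla^j F$ with respect to the flat metric $\bar g$ are continuous, hence bounded on $\overline U$, and a fortiori on $U\cap(\bb{C}^\times)^n$. So the content is in the converse.

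For the converse, I would argue one divisor at a time and then combine; by an inductive/product argument it suffices to treat the case $n=1$ (extending across a single puncture) and then apply it in each coordinate $Z^j_v$ separately, using boundedness of mixed covariant derivatives to control the joint regularity. In the one-variable case, write $Z=Z^1_v$ and suppose all derivatives $\partial^a\bar\partial^b F$ are bounded near $Z=0$. Boundedness of $F$ and of $\partial_Z F,\partial_{\bar Z}F$ near $0$ already forces $F$ to extend continuously to a function $\tilde F$ on the disc which is Lipschitz, and one then shows $\tilde F$ is $C^k$ for every $k$ by the following mechanism: for a bounded function on a punctured disc whose partials up to order $k+1$ are bounded, the fundamental theorem of calculus along radial (or Wirtinger-type) paths shows each partial $\partial^a\bar\partial^b F$ with $a+b\le k$ extends continuously across $0$, because its own first derivatives are bounded, and continuity of the top-order partials of the extension is exactly the $C^k$ condition. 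Iterating over all $k$ gives a smooth extension. Concretely the estimate to invoke is: if $u$ is defined on $0<|Z|<\epsilon$ with $u$ and $\nabla u$ (flat) bounded by $C$, then for $Z,Z'$ in that punctured disc $|u(Z)-u(Z')|\le C|Z-Z'|$, so $u$ is uniformly continuous and extends; apply this to $u=\partial^a\bar\partial^b F$ for each $a+b\le k$, using the hypothesis at order $k+1$.

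Once the one-variable extension is established, I would upgrade to $\bb{C}^n$ by noting that the hypotheses give boundedness of \emph{all} covariant derivatives $\nabla^j F$ on $U\cap(\bb{C}^\times)^n$, in particular all mixed holomorphic/antiholomorphic partials in the several variables $Z^1_v,\dots,Z^n_v$. Freezing all but one variable in the interior region and applying the $n=1$ result extends $F$ smoothly across each coordinate hyperplane; the bounds on mixed partials guarantee the resulting extensions across different hyperplanes are compatible and jointly smooth, since a function on a polydisc whose partials of every order are bounded on the complement of the coordinate hyperplanes, and which is continuous, is automatically $C^\infty$ — again by the radial fundamental-theorem-of-calculus argument applied coordinate by coordinate. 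Finally I would remark that this is exactly the criterion we need: it will be applied with $F=\cu{F}(f)$ expressed in the chart $\check p^{-1}(B_v)\cong\bb{C}^n$ via the coordinates $Z^j_v=e^{2\pi i z^j_v}$, translating decay/growth conditions on $f$ along the rays $t_j\to-\infty$ (equivalently $\xi^j_v\to-\infty$, i.e. $|Z^j_v|\to 0$) into smoothness of the SYZ transform across the toric boundary divisors.

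The main obstacle I anticipate is not any single estimate but the bookkeeping of passing from the punctured-polydisc picture to genuine $C^\infty$-smoothness across a normal-crossing configuration of divisors: one must be careful that boundedness of Euclidean covariant derivatives (as opposed to, say, polar or logarithmic derivatives) is the right hypothesis, and that the iterated fundamental-theorem-of-calculus argument really produces continuity of \emph{all} partials of the extension, including those that differentiate in directions normal to several divisors simultaneously. Making that precise — essentially a several-variable Whitney extension / removable-singularity argument for $C^\infty$ functions along a coordinate cross — is where the real work lies; everything else is the standard observation that bounded gradient implies Lipschitz implies continuous extension.
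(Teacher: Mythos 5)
Your proposal is correct in its essential idea --- bounded gradient forces Lipschitz continuity on a domain whose complement is codimension two, hence uniform continuity and a unique continuous extension, applied to each $\nabla^j F$ and bootstrapped --- but your route through a one-variable reduction is genuinely different from the paper's, and arguably longer. The paper works in $\bb{C}^n$ in a single stroke: because the divisor $\bigcup_{j}\{z_j=0\}$ has real codimension two, any two points $z_0,z_1$ in a precompact ball $U$ minus the divisor can be joined by a path in $U\cap(\bb{C}^\times)^n$ of length arbitrarily close to $|z_1-z_0|$, and the mean value theorem immediately gives $|\nabla^jF(z_1)-\nabla^jF(z_0)|\leq M_U|z_1-z_0|$; uniform continuity then produces the continuous extension of each $\nabla^jF$, and a partition-of-unity / uniqueness argument globalizes it. Your reduction instead freezes coordinates, proves the $n=1$ statement by the same Lipschitz-then-iterate mechanism (where the puncture is again codimension two), and then tries to glue across the normal-crossing configuration. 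That gluing step, which you yourself flag as ``where the real work lies,'' is precisely the bookkeeping that the paper's direct $\bb{C}^n$ argument sidesteps: you cannot freeze a coordinate at zero before $F$ has been extended there, so near the deep strata you need a separate bootstrap or a convexity-of-domains argument, whereas the single codimension-two path estimate in $\bb{C}^n$ handles all strata uniformly. What your approach buys is a more explicit account of how one verifies that the continuous extensions of the derivatives really are the derivatives of the extension (the fundamental-theorem-of-calculus step that the paper elides); what it loses is the clean uniformity of the paper's one-shot Lipschitz estimate. Both are valid; the paper's is shorter.
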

\begin{proof}
We cover $\bb{C}^n$ by pre-compact open ball $\cu{U}:=\{U\}$. Fix $U\in\cu{U}$. Since $U$ is connected and the divisor $\bigcup_{j=1}^n\{z_j=0\}$ is of real co-dimension 2, for each two points $z_0,z_1\in U\cap (\bb{C}^{\times})^n$, we can choose a regular path $\gamma:[0,1]\to U\cap(\bb{C}^{\times})^n$ joining $z_0$ to $z_1$. By reparametrizing $\gamma$, we can assume $\gamma$ has constant speed $|\dot\gamma|_{\bar{g}}=|z_1-z_0|$, the distance between $z_0$ and $z_1$. It follows from the mean value theorem that
$$|\nabla^jF(z_1)-\nabla^jF(z_0)|_{\bar{g}}\leq M_U|z_1-z_0|,$$
for some constant $M_U>0$, depending on the open ball $U$. This implies $\nabla^jF$ is uniformly continuous on $U\cap(\bb{C}^{\times})^n$. Hence we obtain an unique continuous extension of $\nabla^jF$ to $\overline{U}$. If $U,V$ are two overlapping pre-compact open sets in $\bb{C}^n$, by uniqueness, the extensions coincide on $\overline{U}\cap\overline{V}$. Hence $\nabla^jF$ extends to a global continuous section on $\bb{C}^n$. The converse is trivial.
\end{proof}

\begin{remark}
The result of Lemma \ref{lem:ext_fun} dose not depend on the chosen metric on $\bb{C}^n$. We use flat metrics only for the sake of convenience.
\end{remark}

For a function $F:p^{-1}(\mathring{B}_v)\cong(\bb{C}^{\times})^n\to\bb{C}$, write
$$\nabla^kF=\sum_{|I|+|J|=k}a_{IJ}(\xi_v,\check{y}_v)d\xi^I_v\otimes d\check{y}^J_v+\sum_{|I|+|J|=k}b_{IJ}(\xi_v,\check{y}_v)d\check{y}^I_v\otimes d\xi^J_v,$$
where we put $b_{IJ}\equiv 0$ if $I=\phi$ or $J=\phi$. For each multi-sets $I,J$, let $P_{IJ},Q_{IJ}$ be the linear differential operator so that
\begin{align*}
P_{IJ}F & = a_{IJ},\\
Q_{IJ}F & = b_{IJ}.
\end{align*}
In terms of the coordinates $\xi_v^j,\check{y}_v^j$, the flat metric $\bar{g}$ reads
$$\bar{g}=4\pi^2\sum_{j=1}^ne^{-4\pi\xi_v^j}\left(d\xi_v^j\otimes d\xi_v^j+d\check{y}_v^j\otimes d\check{y}_v^j\right)$$
and the only non-zero Christoffel symbols are $\Gamma_{jj}^j=-2\pi$, which are constants. We see that the coefficients of $P_{IJ},Q_{IJ}$ are in fact constants. Let's denote $\hat{P}_{IJ},\hat{Q}_{IJ}$ the differential operator on $N_{\bb{R}}\times\{m\}$ by replacing $\pd{}{\check{y}^j_v}$ by multiplication by $2\pi im_j$.

\begin{definition}
Let $L\subset X$ be a Lagrangian section that satisfies Condition $(*_{[a]})$ for some $a\in\bb{Z}^d$. A function $f\in A^{0}_{r.d.}(P(L_0,L))$ is said to be \emph{weakly extendable} if for any vertex $v\in V(B)$, multi-sets $I,J$ and pre-compact open subset $U\subset B_v$, there exists constant $M_{IJ,U}>0$ such that
\begin{align*}
\left|\hat{P}_{IJ}f({}^tA\xi_v,A^{-1}m+\lambda)\right|\leq & M_{IJ,U}\prod_{i\in I}e^{-2\pi\xi^i_v}\prod_{j\in J}e^{-2\pi\xi_v^j},\\
\left|\hat{Q}_{IJ}f({}^tA\xi_v,A^{-1}m+\lambda)\right|\leq & M_{IJ,U}\prod_{i\in I}e^{-2\pi\xi^i_v}\prod_{j\in J}e^{-2\pi\xi_v^j},
\end{align*}
for all $m\in M$, $\xi_v\in d\phi^{-1}(U)$ and $\check{y}_v\in\check{T}^n$.
\end{definition}

In particular, if $f$ is weakly extendable, we see that it is bounded on $N_{\bb{R}}=d\phi^{-1}(\mathring{B})$.
The terminology ``weakly extendable'' is justified by the following

\begin{lemma}\label{lem:ext_sec}
If $\cu{F}(f)$ extends to a smooth section of $\check{L}$ on $\check{X}$, then $f$ is weakly extendable.
\end{lemma}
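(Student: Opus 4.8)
\textbf{Proof proposal for Lemma \ref{lem:ext_sec}.}

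The plan is to unwind the hypothesis using the local coordinate formula for $\cu{F}(f)$ on the vertex chart $\check{p}^{-1}(B_v)\cong\bb{C}^n$ and then invoke Lemma \ref{lem:ext_fun} to convert the smoothness of $\cu{F}(f)$ into boundedness of covariant derivatives, which in turn must be repackaged as the weakly extendable estimates on $f$. First I would fix a vertex $v\in V(B)$ and recall that on $\check{p}^{-1}(\mathring{B}_v)\cong(\bb{C}^\times)^n$ we have the frame $\check{1}_v$ with
$$\cu{F}(f)=\left(\sum_{m\in\bb{Z}^n}f({}^tA\xi_v,A^{-1}m+\lambda)e^{2\pi i(m,\check{y}_v)}\right)\otimes\check{1}_v.$$
Since $\check{1}_v$ is a unitary frame that extends over $\check{p}^{-1}(B_v)$, the assumption that $\cu{F}(f)$ extends to a smooth section on $\check{X}$ is equivalent to the statement that the component function
$$F(\xi_v,\check{y}_v):=\sum_{m\in\bb{Z}^n}f({}^tA\xi_v,A^{-1}m+\lambda)e^{2\pi i(m,\check{y}_v)}$$
extends to a smooth function on the chart $\bb{C}^n$ (with coordinates $Z_v^j=e^{2\pi i z_v^j}$). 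By Lemma \ref{lem:ext_fun}, this is equivalent to all covariant derivatives $\nabla^k F$ (with respect to the flat metric $\bar g$) being bounded on $U\cap(\bb{C}^\times)^n$ for every pre-compact $U\subset\bb{C}^n$.

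Next I would pass from the boundedness of $\nabla^k F$ to the individual coefficient estimates. Writing $\nabla^k F=\sum_{|I|+|J|=k}a_{IJ}\,d\xi_v^I\otimes d\check{y}_v^J+\sum b_{IJ}\,d\check{y}_v^I\otimes d\xi_v^J$ as in the paragraph preceding the definition of weakly extendable, the operators $P_{IJ},Q_{IJ}$ extract $a_{IJ}=P_{IJ}F$, $b_{IJ}=Q_{IJ}F$; since the Christoffel symbols $\Gamma_{jj}^j=-2\pi$ are constant, these operators have constant coefficients, so boundedness of $\nabla^kF$ in the $\bar g$-norm gives, after accounting for the conformal factors $4\pi^2 e^{-4\pi\xi_v^j}$ in $\bar g$, bounds of the shape
$$|P_{IJ}F(\xi_v,\check{y}_v)|\le M_{IJ,U}\prod_{i\in I}e^{-2\pi\xi_v^i}\prod_{j\in J}e^{-2\pi\xi_v^j}$$
on $d\phi^{-1}(U)\times\check T^n$, and similarly for $Q_{IJ}F$. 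Here the product of $e^{-2\pi\xi_v^i}$-factors is exactly the square root of the relevant diagonal entries of the metric $\bar g$, which is why these are the correct weights. Finally, differentiating the series for $F$ term-by-term in $\check{y}_v$ replaces each $\partial/\partial\check{y}_v^j$ by multiplication by $2\pi i m_j$, so $P_{IJ}F$ evaluated on the series is the $\check y_v$-Fourier series whose $m$-th coefficient is exactly $\hat P_{IJ}f({}^tA\xi_v,A^{-1}m+\lambda)$. Since a Fourier coefficient is bounded by the sup of the function over the torus $\check T^n$, the bound on $P_{IJ}F$ yields $|\hat P_{IJ}f({}^tA\xi_v,A^{-1}m+\lambda)|\le M_{IJ,U}\prod_{i\in I}e^{-2\pi\xi_v^i}\prod_{j\in J}e^{-2\pi\xi_v^j}$ for all $m\in M$, which is precisely the first inequality in the definition of weakly extendable; the $\hat Q_{IJ}$-inequality follows identically from the bound on $Q_{IJ}F$. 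Running this over all vertices $v\in V(B)$ completes the argument.

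I expect the main obstacle to be the justification of term-by-term differentiation of the Fourier series for $F$ and the bookkeeping that identifies the constant-coefficient operators $P_{IJ},Q_{IJ}$ on the chart with the operators $\hat P_{IJ},\hat Q_{IJ}$ acting fiberwise after the Fourier substitution $\partial/\partial\check y_v^j\mapsto 2\pi i m_j$. One must know a priori that $f\in A^0_{r.d.}(P(L_0,L))$ — in particular that $f$ and its derivatives decay rapidly in $m$ — so that $F$ is genuinely smooth on $(\bb{C}^\times)^n$ and all the Fourier manipulations are legitimate; this is where the rapid-decay hypothesis on $f$ is used. The conformal-factor bookkeeping (tracking how the $e^{-4\pi\xi_v^j}$-weights in $\bar g$ produce the $e^{-2\pi\xi_v^i}$-weights in the estimates, including mixed terms and the role of $d\phi^{-1}(U)$ versus $U$) is routine but must be done carefully; I would relegate it to a short computation rather than spelling it out in full.
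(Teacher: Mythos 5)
Your proposal is correct and follows essentially the same route as the paper's proof: pass to the vertex chart, reduce smoothness of $\cu{F}(f)$ to boundedness of $\nabla^kF$ via Lemma \ref{lem:ext_fun}, extract the conformally weighted bounds on $P_{IJ}F$, $Q_{IJ}F$ from $|\nabla^kF|_{\bar g}\le M_U$, and then bound the Fourier coefficients $\hat P_{IJ}f_m$, $\hat Q_{IJ}f_m$ by integrating over $\check T^n$. The only cosmetic difference is that you speak of bounding Fourier coefficients by the sup over the torus while the paper writes the integral $\int_{\check T^n}|P_{IJ}F|\,d\check y$; these give the same estimate.
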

\begin{proof}
Since $\check{1}_v$ is a section defined on $\check{p}^{-1}(B_v)$, in order to obtain a global section of $\check{L}$, the functions
$$F(\xi_v,\check{y}_v):=\sum_{m\in\bb{Z}^n}f_m(\xi_v)e^{2\pi i(m,\check{y}_v)}:=\sum_{m\in\bb{Z}^n}f({}^tA\xi_v,A^{-1}m+\lambda)e^{2\pi i(m,\check{y}_v)}$$
need to be extended to smooth functions defined on $\check{p}^{-1}(B_v)$. Equivalently, by Lemma \ref{lem:ext_fun}, for every pre-compact subset $U\subset\bb{C}^n$, they have bounded covariant derivatives on $U\cap(\bb{C}^{\times})^n$.

We have
\begin{align*}
|\nabla^kF|^2_{\bar{g}} & = \sum_{|I|+|J|=k}\left(|P_{IJ}F|^2+|Q_{IJ}F|^2\right)\prod_{i\in I}e^{4\pi\xi^i_v}\prod_{j\in J}e^{4\pi\xi^j_v},\\
P_{IJ}F & = \sum_{m\in\bb{Z}^n}\hat{P}_{IJ}f_m(\xi_v)e^{2\pi i(m,\check{y}_v)},\\
Q_{IJ}F & = \sum_{m\in\bb{Z}^n}\hat{Q}_{IJ}f_m(\xi_v)e^{2\pi i(m,\check{y}_v)}.
\end{align*}
Now, suppose $|\nabla^kF|_{\bar{g}}\leq M_U$ for some constant $M_U$. Then
\begin{align*}
|P_{IJ}F|\leq & M_{IJ,U}\prod_{i\in I}e^{-2\pi\xi^i_v}\prod_{j\in J}e^{-2\pi\xi^j_v},\\
|Q_{IJ}F|\leq & M_{IJ,U}\prod_{i\in I}e^{-2\pi\xi^i_v}\prod_{j\in J}e^{-2\pi\xi^j_v},
\end{align*}
for some constants $M_{IJ,U}>0$. Hence by absorbing the constants, we have
\begin{align*}
|\hat{P}_{IJ}f_m|\leq & \int_{\check{T}^n}|P_IJF|d\check{y}\leq M_{IJ,U}\prod_{i\in I}e^{-2\pi\xi^i_v}\prod_{j\in J}e^{-2\pi\xi^j_v},\\
|\hat{Q}_{IJ}f_m|\leq & \int_{\check{T}^n}|Q_IJF|d\check{y}\leq M_{IJ,U}\prod_{i\in I}e^{-2\pi\xi^i_v}\prod_{j\in J}e^{-2\pi\xi^j_v}.
\end{align*}
Thus, $f$ is weakly extendable.
\end{proof}

\begin{remark}
We expect there should be a stronger condition on $f$ so that $\cu{F}(f)$ extends to a smooth section of $\check{L}$.
\end{remark}

Now, we return to the Lagrangian section $L_{\phi}$. Lemma \ref{lem:ext_sec} shows that not all functions on $P(L_0,L_{\phi})$ can be transformed into global sections of $\check{L}_{\phi}$. Next, we deal with holomorphic sections. Via the inverse SYZ transform, we need to consider the solution to $d_Wf=0$. We have the following

\begin{lemma}\label{lem:ext_holo}
$f:N_{\bb{R}}\times\{m\}\to\bb{R}$ is an non-trivial weakly extendable solution to $d_Wf=0$ on the component $N_{\bb{R}}\times\{m\}$ of $P(L_0,L_{\phi})\cong N_{\bb{R}}\times M$ if and only if $f$ is proportional to
$$f_{u}(\xi)=e^{-2\pi\phi(\xi)}e^{2\pi\inner{u,\xi}}$$
where $u\in M\cap B$.
\end{lemma}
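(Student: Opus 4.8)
The plan is to reduce the statement to an elementary ODE together with the boundary behaviour of the K\"ahler potential $\phi$.

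\emph{First, solving $d_Wf=0$.} On the single component $N_{\bb{R}}\times\{m\}\cong\bb{R}^n$ I would note that $d\cu{A}$ is exact: taking $\xi^{(1)}=0$ for $L_0$ and $\xi^{(2)}=d\phi$ for $L_{\phi}$ (with $\hbar=1$) in the formula of Section \ref{sec:morphism} gives $d\cu{A}=\sum_j\bigl(\partial\phi/\partial\xi^j+m_j\bigr)d\xi^j=d\Phi_m$ with $\Phi_m:=\phi+\inner{m,\cdot}$. Hence $d_Wf=e^{-2\pi\Phi_m}d\bigl(e^{2\pi\Phi_m}f\bigr)$, and since $N_{\bb{R}}$ is connected any solution must be $Ce^{-2\pi\Phi_m}$ for a constant $C$; writing $u:=-m\in M$ this is exactly $Cf_u$, and it is nontrivial iff $C\neq0$. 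Conversely the same identity (with $m=-u$) shows every $f_u$, $u\in M$, solves $d_Wf_u=0$. So the lemma reduces to the claim that $f_u$ is weakly extendable precisely when $u\in B$.

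\emph{Next, weak extendability forces $u\in B$.} By the remark following the definition of weak extendability (the case $I=J=\varnothing$, patched over a finite subcover of the compact $B$), a weakly extendable $f$ is bounded on $N_{\bb{R}}$. From the explicit potential $\phi(\xi)=\tfrac1{4\pi}\log\bigl(\sum_{w\in B\cap M}c_we^{4\pi\inner{w,\xi}}\bigr)$ I get the crude bound $\phi(\xi)\le h_B(\xi)+C$, where $h_B(\xi):=\max_{w\in B}\inner{w,\xi}$ is the support function of $B$. Since $|f_u(\xi)|=e^{2\pi(\inner{u,\xi}-\phi(\xi))}$, boundedness forces $\inner{u,\xi}-h_B(\xi)\le C'$ for all $\xi\in N_{\bb{R}}$; replacing $\xi$ by $s\xi$ and letting $s\to\infty$ then gives $\inner{u,\xi}\le h_B(\xi)$ for every $\xi$, i.e.\ $u\in B$, and together with $u\in M$ this is $u\in M\cap B$.

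\emph{Then, $u\in M\cap B$ forces weak extendability.} Here I would argue through Lemma \ref{lem:ext_sec}: it suffices to show $\cu{F}(f_u)$ extends to a smooth (indeed holomorphic) section of $\check{L}_{\phi}$ over $\check{X}$. Fixing a vertex $v\in V(B)$ with cone generated by $v_1,\dots,v_n$ and using the vertex-chart expression $\cu{F}(f)=\sum_m f({}^tA\xi_v,A^{-1}m+\lambda)e^{2\pi i(m,\check{y}_v)}\otimes\check{1}_v$ recorded above, I would exploit the asymptotics $\phi(\xi)=\inner{v,\xi}+\tfrac1{4\pi}\log c_v+R_v(\xi)$ near $v$, where the remainder $R_v$ (coming from $\log(1+\sum_{w\neq v}\tfrac{c_w}{c_v}e^{4\pi\inner{w-v,\xi}})$) and all its derivatives decay exponentially in the local toric coordinates $Z_v^j$, since $\inner{w-v,v_j}=l_j(w)\ge0$ with strict inequality for at least one $j$ whenever $w\neq v$. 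Feeding this into the series shows $\cu{F}(f_u)$ extends holomorphically across the toric divisors $\{Z_v^j=0\}$; as $\check{L}_{\phi}\cong\cu{L}_{[\lambda]}$ is very ample and $u\in M\cap B$, the extension is a constant multiple of the monomial section attached to $u$, an element of $H^0(\check{X},\check{L}_{\phi})$ and hence smooth on $\check{X}$. Lemma \ref{lem:ext_sec} then yields weak extendability of $f_u$, and combining the three steps proves the lemma.

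\emph{The main obstacle.} The first step is routine and the boundedness step is short; the analytic heart is the last step --- controlling not just $\phi$ but all the constant-coefficient operators $\hat{P}_{IJ}f_u,\hat{Q}_{IJ}f_u$ uniformly on $d\phi^{-1}(U)$ for pre-compact $U\subset B_v$, i.e.\ showing that $\phi-h_B$ and its derivatives decay like powers of $|Z_v^j|^2$ along each face of $B$. If one wishes to bypass Lemma \ref{lem:ext_sec}, one can instead verify these weak-extendability estimates for $f_u$ directly, which requires exactly the same asymptotic input. Everything else is bookkeeping.
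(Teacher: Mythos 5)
Your proposal follows the same structure as the paper's proof: reduce to the explicit solution of $d_Wf=0$ on each component, then invoke Lemma \ref{lem:ext_sec} plus the standard toric fact that characters attached to $u\in M\cap B$ extend for one direction, and boundedness for the other. The only genuine variation is that for the \emph{only-if} direction you pass to the support function $h_B$ and scale, rather than the paper's direct convexity inequality along a ray $tv_k$; your version is a touch cleaner and incidentally sidesteps a sign slip in the paper's ray computation (the blow-up in fact occurs as $t\to-\infty$, equivalently with the convexity inequality applied with $\xi=tv_k$, $\xi'=0$).
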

\begin{proof}
On each component $P_m:=N_{\bb{R}}\times\{m\}$ of $P(L_0,L_{\phi})$, solution to $d_Wf=0$ is proportional to
$$f_m(\xi)=e^{-2\pi\phi(\xi)}e^{2\pi\inner{m,\xi}}.$$
We claim that $m\in M\cap B$ if and only if $f_m$ weakly extendable.

For $u\in B\cap M$, we apply the SYZ transform to $f_u$ to obtain
$$\cu{F}(f_u)(z)=e^{-2\pi i\inner{u,z}}\otimes\check{e}_{\phi},$$
which is nothing but the character  corresponds to $u$. It is well-known in toric geometry that $\cu{F}(f_u)$ extends to a holomorphic section of $\cu{L}_{[\lambda]}\cong\check{L}_{\phi}$. Hence $f_u$ is weakly extendable by Lemma \ref{lem:ext_sec}.

Conversely, note that $\phi(\xi)-\inner{m,\xi}$ is convex, we have
$$\phi(\xi)-\inner{m,\xi}\geq\phi(\xi')-\inner{m,\xi'}+\inner{d\phi(\xi')-m,\xi-\xi'},$$
for any $\xi,\xi'\in N_{\bb{R}}$. As $m\notin M\cap B$, there exists $k\in\{1,\dots,d\}$ such that
$$\inner{m,v_k}+\lambda_k<0.$$
Hence
$$\inner{d\phi(\xi')-m,v_k}>0.$$
for all $\xi'\in N_{\bb{R}}$. In terms of the polytope coordinates, this means
$$\inner{x-m,v_k}>0,$$
for all $x\in B$. Since $B$ is compact, there exists $\delta>0$ such that
$$\inner{x-m,v_k}>\delta,$$
for all $x\in B$. Equivalently,
$$\inner{d\phi(\xi')-m,v_k}>\delta,$$
for all $\xi'\in N_{\bb{R}}$. Putting $\xi=0$ and $\xi'=tv_k$, for $t>0$, we get
$$\phi(tv_k)-\inner{m,tv_k}<\phi(0)-t\delta.$$
Hence
$$\lim_{t\to\infty}\left(\phi(tv_k)-\inner{m,tv_k}\right)=-\infty$$
and hence $f_m$ is unbounded in the $v_k$-direction. Therefore, $f_m$ cannot be weakly extendable for $m\notin B\cap M$.
\end{proof}

As before, the function $f_u$ in Lemma \ref{lem:ext_holo} is regarded as the Witten-deformation of the intersection point $[\xi(t)]\in\overline{L_0\cap L_{\phi}}$, with
$$\lim_{t\to\infty}\xi(t)=u.$$

We are now ready to give another proof of the quantization problem via SYZ transforms; the original proof can be found in \cite{Toric_geometric_quantization}.

\begin{theorem}\label{thm:proj_toric}
Let $\check{p}:\check{X}\to B$ be the moment map of the projective toric manifold $\check{X}$. With respect to the prequantum line bundle $(\check{L}_{\phi},\nabla_{\check{L}_{\phi}})$, the SYZ transform $\cu{F}$ induces a canonical isomorphism between the space of real and complex polarized sections.
\end{theorem}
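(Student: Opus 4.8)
The plan is to run the same three-step chain of canonical isomorphisms used to prove Theorem~\ref{thm:sf}, the new feature being that the boundary $\partial B$ of the moment polytope supplies extra contributions. Concretely, we want to produce
$$\Gamma_{P_{\bb{R}}}(\check{X},\check{L}_{\phi})\ \cong\ \bigoplus_{\check{F}_x\ \mathrm{BS\ isotropic}}\bb{C}\cdot\check{F}_x\ \cong\ \bigoplus_{u\in M\cap B}\bb{C}\cdot u\ \cong\ \ker(d_W)\cap\{\text{weakly extendable }f\}\ \cong\ H^0(\check{X},\check{L}_{\phi})=\Gamma_{P_{\bb{C}}}(\check{X},\check{L}_{\phi}),$$
with the composite realized by $\cu{F}\circ A$, where $A$ is the Witten-deformation assignment (as in Proposition~\ref{prop:Witten_Morse}, Lemma~\ref{lem:ext_holo}) and $\cu{F}$ is the SYZ transform of Proposition~\ref{prop:Fourier}.

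First I would treat the real-polarization side. By the isotropic version of \'Sniatycki's theorem (Proposition~\ref{prop:BS_fiber_pola} and \cite{Toric_geometric_quantization}), $\Gamma_{P_{\bb{R}}}(\check{X},\check{L}_{\phi})$ is freely spanned by the Bohr--Sommerfeld isotropic fibers of $\check{p}$. Using $\nabla_{\check{L}_{\phi}}=d+2\pi i\sum_j\partial_{\xi^j}\phi\,d\check{y}^j$ together with the factorization $\check{p}=d\phi\circ\pi$, the Bohr--Sommerfeld condition over an interior point says $d\phi(\xi)\in M$, i.e.\ the base point lies in $M\cap\mathring{B}$; a boundary-fiber analysis (the fibers there being isotropic tori) shows the condition holds exactly over $M\cap\partial B$. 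Thus the Bohr--Sommerfeld isotropic fibers are precisely the fibers over $M\cap B$, and under the identifications $\overline{N_{\bb{R}}}\cong B$, $\overline{L_0\cap L_{\phi}}\cong M\cap B$ they correspond canonically to the completed intersection set $\overline{L_0\cap L_{\phi}}$.

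Next comes the de Rham / Witten--Morse side on $P(L_0,L_{\phi})\cong N_{\bb{R}}\times M$. Since each component $N_{\bb{R}}\times\{m\}\cong\bb{R}^n$ is contractible, the closed $1$-form $d\cu{A}$ is exact there, so $\ker(d_W)\cap A^0(N_{\bb{R}}\times\{m\})$ is one-dimensional, spanned by $e^{-2\pi\cu{A}_m}$; computing the primitive identifies this (up to scale) with $f_u(\xi)=e^{-2\pi\phi(\xi)}e^{2\pi\langle u,\xi\rangle}$ for the lattice point $u$ determined by $m$, which is exactly the Witten deformation $A$ of the corresponding interior or at-infinity critical point of $\cu{A}$. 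By Lemma~\ref{lem:ext_holo} such a solution is weakly extendable precisely when $u\in M\cap B$. Hence $\ker(d_W)$ intersected with the weakly extendable rapidly decaying functions equals $\bigoplus_{u\in M\cap B}\bb{C}\cdot f_u$, and $A$ identifies it with $\bigoplus_{u\in M\cap B}\bb{C}\cdot u\cong\bigoplus_{[\xi]\in\overline{L_0\cap L_{\phi}}}\bb{C}\cdot[\xi]$.

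Finally I would invoke the SYZ transform. Proposition~\ref{prop:Fourier} makes $\cu{F}$ an isomorphism of complexes, and on the above basis $\cu{F}(f_u)$ is the character of $\cu{L}_{[\lambda]}\cong\check{L}_{\phi}$ attached to $u$; by the classical toric description of sections this extends to a global holomorphic section, and as $u$ ranges over $M\cap B$ these give a basis of $H^0(\check{X},\check{L}_{\phi})$. Conversely Lemma~\ref{lem:ext_sec} shows any $f$ whose transform extends to $\check{X}$ is weakly extendable, so $\cu{F}$ restricts to a bijection $\ker(d_W)\cap\{\text{weakly extendable}\}\xrightarrow{\ \sim\ }H^0(\check{X},\check{L}_{\phi})$. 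Composing the whole chain gives the desired canonical isomorphism $\Gamma_{P_{\bb{R}}}(\check{X},\check{L}_{\phi})\cong\Gamma_{P_{\bb{C}}}(\check{X},\check{L}_{\phi})$. I expect the main obstacle to be this last reconciliation at the boundary: Lemma~\ref{lem:ext_sec} supplies only one implication of ``weakly extendable $\Leftrightarrow$ extends to $\check{X}$'', so one must use the explicit toric description of $H^0(\check{X},\check{L}_{\phi})$ to see that the weakly extendable $d_W$-closed functions surject onto it, and one must verify that the Bohr--Sommerfeld isotropic fibers over $\partial B$ match the at-infinity intersections in $\overline{L_0\cap L_{\phi}}$ with the right multiplicity --- this is precisely where the degeneration of the moment-map fibers genuinely enters, and it has no counterpart in the semi-flat Theorem~\ref{thm:sf}.
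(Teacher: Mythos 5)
Your proposal is correct and follows essentially the same route as the paper's proof: identify the Bohr--Sommerfeld isotropic fibers (including boundary fibers, via the vertex coordinate charts) with the completed intersection set $\overline{L_0\cap L_{\phi}}\cong M\cap B$, use Lemma~\ref{lem:ext_holo} to match the weakly extendable $d_W$-closed functions with $\{f_u : u\in M\cap B\}$, and transport by $\cu{F}$ to the characters generating $H^0(\check{X},\check{L}_{\phi})$, invoking the classical toric description of sections for surjectivity since Lemma~\ref{lem:ext_sec} only gives one implication. You have correctly anticipated exactly the two boundary subtleties (the isotropic-fiber BS analysis and the one-sidedness of weak extendability) that the paper's proof addresses explicitly.
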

\begin{proof}
In terms of the coordinates $(\xi,\check{y})\in TN_{\bb{R}}/N$, the connection $\nabla_{\check{L}_{\phi}}$ is given by
$$\nabla_{\check{L}_{\phi}}=d+2\pi i\sum_{j=1}^n\pd{\phi}{\xi^j}d\check{y}^j.$$
Suppose $\xi(t)$ is a ray so that
$$\lim_{t\to\infty}d\phi(\xi(t))=u,$$
for some $u\in M$ ($u\in M\cap B$ in fact). If $u\in\mathring{B}$, then
$$\lim_{t\to\infty}\nabla_{\check{L}_{\phi}}|_{\check{F}_{\xi(t)}}=d+2\pi i\sum_{j=1}^nu_jd\check{y}^j,$$
which is equivalent to the trivial connection on the Lagrangian fiber $\check{F}_u$. If $u\in\partial B$, we choose $v\in V(B)$ such that $u\in B_v$. Since $u\in\partial B_v$, there exists subset $S_u\subset\{1,\dots,n\}$ such that the fiber $\check{F}_u$ sits inside the divisor
$$\left\{\prod_{j\in S_u}Z_j=0\right\}\subset\bb{C}^n\cong\check{p}^{-1}(B_v).$$
Recall we have a frame $\check{1}_v$ on $\check{p}^{-1}(B_v)$ and coordinates $\xi_v^j,\check{y}_v^j$ on $\check{p}^{-1}(B_v)$ such that
\begin{align*}
\check{1}_v & = e^{2\pi i(v,\check{y})}e^{2\pi i(\lambda,\check{y})}\check{1}_{\phi},\\
\xi_v^j & = \sum_{k=1}^n{^tA}^{jk}\xi^k,\\
\check{y}^j_v & = \sum_{k=1}^n{^tA}^{jk}\check{y}^k.
\end{align*}
Then with respect to $\check{1}_v$ and the coordinates $\xi_v^j,\check{y}_v^j$, we have
$$\nabla_{\check{L}_{\phi}}=d+2\pi i\sum_{j,k=1}^nA_{jk}\left(v_k+\lambda_k+\pd{\phi}{\xi^k}\right)d\check{y}_v^j.$$
Hence
$$\lim_{t\to\infty}\nabla_{\check{L}_{\phi}}|_{\check{F}_{\xi(t)}}=d+2\pi i\sum_{j\notin S_u}(A(v+\lambda+u))_jd\check{y}^j_v.$$
Since $A(v+\lambda+u)\in\bb{Z}^n$, it is equivalent to the trivial connection on the isotropic fiber $\check{F}_u$. Hence we obtain the identification
$$\bigoplus_{\check{F}_x:\text{BS fiber}}\bb{C}\cdot\check{F}_x\cong\overline{L_0\cap L_{\phi}}.$$
On a component of the fiberwise geodesic path space $P(L_0,L_{\phi})\cong N_{\bb{R}}\times M$, by Lemma \ref{lem:ext_holo}, bounded non-trivial solution to $d_Wf=0$ is proportional to
$$f_u(\xi)=e^{-2\pi\phi(\xi)}e^{2\pi \inner{u,\xi}},$$
which we can regard it as a function on $P(L_0,L_{\phi})$ by setting
$$
f_{u}(\xi,m):=
\left\{
\begin{array}{ll}
e^{-2\pi\phi(\xi)}e^{2\pi \inner{u,\xi}} &\text{ if }m=-u\\
0 &\text{ if } m\neq -u
\end{array}
\right.
$$
As we have seen in the proof of Lemma \ref{lem:ext_holo}, the SYZ transform of $f_u$ is given by the character
$$\cu{F}(f_u)(z)=e^{-2\pi i\inner{u,z}}\otimes\check{e}_{\phi}$$
corresponds to $u$, which extends to a global holomorphic section of $\cu{L}_{[\lambda]}$. Since $\cu{L}_{[\lambda]}\cong\check{L}_g$ as holomorphic line bundles by Lemma \ref{lem:same_bundle} and any holomorphic section of $\cu{L}_{[\lambda]}$ is a linear combination of these holomorphic sections, the result follows.
\end{proof}

\bibliographystyle{amsplain}
\bibliography{geometry}

\end{document}